\numberwithin{equation}{section}
\def\carre{\hfill $\Box$}
\def\one{{\mathbbm{1}}}
\def\supp{{\rm supp}}
\def\tK{\tilde{K}}
\def\th{\theta}
\def\trho{{\tilde{\rho}}}
\def\bG{{\boldsymbol{G}}}
\def\bbR{{\mathfrak{R}}}
\def\bbA{{\mathbbm{A}}}
\def\bbG{{\mathbbm{G}}}
\def\bbL{{\mathbbm{L}}}
\def\bbM{{\mathbbm{M}}}
\def\cK{{\mathcal{K}}}
\def\brK{{\bar{K}}}
\def\brfU{{\bar{\fU}}}
\def\brcL{{\bar\cL}}
\def\breps{{\bar\varepsilon}}
\def\brdelta{{\bar\delta}}
\def\brf{{\bar f}}
\def\bre{{\bar e}}
\newcommand{\bx}{{\bf x}}
\newcommand{\by}{{\bf y}}
\def\cA{\mathcal{A}}
\def\cB{\mathcal{B}}
\def\cG{\mathcal{G}}
\def\cH{\mathcal{H}}
\def\cL{\mathcal L}
\def\cM{\mathcal M}
\def\cP{\mathcal P}
\def\cS{\mathcal S}
\def\cV{\mathcal V}
\def\cZ{\mathcal Z}
\def\eps{\varepsilon}
\def\var{{\text{Var}}}
\def\fa{{\mathfrak{a}}}
\def\fh{{\mathfrak{h}}}
\def\fU{{\mathfrak{U}}}
\def\tcD{\tilde{\cD}}
\def\tLambda{\tilde{\Lambda}}
\def\tcL{\tilde{\cL}}
\def\teps{{\tilde{\eps}}}
\def\E{          \mathbb E}
\def\a{          \alpha}
\def\cF{          \mathcal F}
\def\cA{          \mathcal A}
\def\cD{          \mathcal D}
\def\th{        \theta}
\def\Id{        \rm Id}
\def \R{{\mathbb R}}
\def \Z{{\mathbb Z}}
\def \N{{\mathbb N}}
\newcommand{\T}{{\mathbb T}}
\newcommand{\prf}{{\begin{proof}}}
\newcommand{\epf}{{\end{proof}}}
\newcommand{\bt}{{\mathbf t}}
\newcommand{\ba}{{\mathbf a}}
\newcommand{\bb}{{\mathbf b}}
\newcommand{\bs}{{\mathbf s}}
\newcommand{\br}{{\mathbf r}}
\def\P{{\mathbb P}}
\newtheorem{theo}{\sc Theorem}[section]
\newtheorem{prop}[theo]{\sc Proposition}
\newtheorem{lemm}[theo]{\sc Lemma}
\newtheorem{cor}[theo]{\sc Corollary}
\newtheorem{coro}[theo]{\sc Corollary}
\theoremstyle{definition}
\def\bee{\begin{equation}}
\def\eee{\end{equation}}
\theoremstyle{remark}
\newtheorem{rema}[theo]{\sc Remark}
\newcommand{\pdvr}[2]
{\dfrac{\partial^{#2} #1}{\partial \theta^{#2_1} \partial r^{#2_2}}}
\newcommand{\pdvrs}[2]
{\partial^{#2} #1 /\partial \theta^{#2_1} \partial r^{#2_2}}
\def\brl{{\hat{l}}}
\def\hZ{{\hat{Z}}}
\def\hcD{\hat{\cD}}
\def\hcL{\hat{\cL}}
\def\hPi{\hat{\Pi}}
\def\hxi{\hat{\xi}}
\def\htheta{{\hat\theta}}
\def\tG{{\tilde{G}}}
\def\tbbG{{\tilde{\bbG}}}
\def\tbbL{{\tilde{\bbL}}}
\def\tcM{{\tilde{\cM}}}
\def\tcS{{\tilde{\cS}}}
\def\tu{{\tilde{u}}}
\def\tZ{{\tilde{Z}}}
\def\ttZ{{\tilde{\tZ}}}
\def\teta{{\tilde\eta}}
\def\tsigma{{\tilde\sigma}}
\def\ttheta{{\tilde\theta}}
\newcommand{\diag}{\mathop{\rm diag}}
\def\cN{\mathcal N}
\author{ Dmitry Dolgopyat \and Bassam Fayad\and Ilya Vinogradov}
\title[CLTs for simultaneous Diophantine approximations]
{Central limit theorems for simultaneous Diophantine approximations}
\begin{document}

\begin{abstract}
We study the distribution modulo $1$ 
of the values taken on the integers of 
 $r$  linear forms in $d$ variables with random coefficients.
We obtain quenched and annealed central limit theorems  for the number of simultaneous hits 
into shrinking  targets of radii $n^{-\frac{r}{d}}$. 
By the Khintchine-Groshev theorem on 
Diophantine approximations, $\frac{r}{d}$ is the critical exponent for the infinite number of hits.

\end{abstract}
\maketitle

\section{Introduction}
\label{ScIntro}
\subsection{Results.}
An important problem in Diophantine approximation is the study of
the speed of approach to $0$ of a possibly inhomogeneous
linear form of several variables evaluated at integers points.
Such a linear form is given by $l: \T \times \T^d \times \Z^d \to \T$  
\begin{align} l_{x,\alpha}(k) = x+ \sum_{i=1}^d k_i \a_i \pmod 1
\end{align}
More generally,  one can consider $r$ linear forms for $r\geq 1$,
corresponding to  $\ba := (\alpha_{i}^j)\in \T^{d\times r}$ and $\bx := (x^1,\dots,x^r)\in \T^r$, where each $\alpha^j, j=1,\ldots,r$, is a vector in $\T^d$. 

Diophantine approximation theory classifies the matrices $\ba$ and vectors $\bx$ according to how ``resonant'' they are; i.e.,\ how well the vector $(l_{x^j,\a^j}(k))_{j=1}^r$ approximates ${\bf 0}:=(0,\dots,0)\in \R^r$ as $k$ varies 
over a large ball in $\Z^d.$
One can then fix a sequence of targets converging to $0$, say intervals  of radius $r_n$  centered at $0$ with $r_n \to 0$, and investigate the integers for which the target is {\it hit}, namely the  integers $k$ such that and $l^j_{x^j,\a^j}(k) \in [-r_{|k|},r_{|k|}]$ for every $j=1,\ldots,r$. An important class of targets is given by radii following a power law, $r_n=c n^{-\gamma}$ for some $\gamma,c>0$ (see for example \cite{KH24,Gr,Schmidt} or \cite{BBKM} for a nice discussion related to the Diophantine properties of linear forms). 

Fix a norm $|\cdot|$ on $\R^d$, and let $\|\cdot\|$ denote the Euclidean norm on $\R^r$. For $c>0$ and $\iota=1$ or $2$, we define sets \begin{align}B_\iota(k,d,r,c) =  [0,c|k|^{-\frac{d}{r}}]^r \subset \R^r\end{align}
for
$\iota=1$ 
and 
\begin{align}B_\iota(k,d,r,c) = \{a \in \R^r : \|a\|\le c|k|^{-\frac{d}{r}}\},\end{align}
for $\iota = 2$.
We then introduce 
\begin{align} \label{eq.general} V_{N,\iota}(\ba,\bx,c) 
& =\# \left\{0\leq |k|<N \colon
(l_{x^j,\a^j}(k))_{j=1}^r  \in B_\iota(k,d,r,c) \right\},
\\
 \label{Un}
 U_{N,\iota}(\ba,c) & =V_{N,\iota}(\ba,\bf{0},c). 
\end{align}

A matrix $\ba \in \T^{d\times r}$ is said to be {\it badly approximable} if for some $c>0$, the 
sequence $ U_{N,\iota}(\ba,c)$ is bounded. By contrast, matrices $\ba$ for which  $ U^\eps_{N,\iota}(\ba,c)$ is unbounded, where $U^\eps_{N,\iota}(\ba,c)$ is defined as $U_{N,\iota}$, but with radii $cn^{-\frac{d}{r}-\eps}$ instead of $cn^{-\frac{d}{r}}$
are called {\it very well approximable} or VWA. 
The obvious direction of the Borel-Cantelli lemma 
implies that almost every $\a \in \T^{d\times r}$ is not very well approximable (cf. \cite[Chap. VII]{Cassel}).
The celebrated Khintchine-Groshev theorem on Diophantine approximation implies that badly approximable 
matrices are also of zero measure \cite{Kh, Gr, DS, schmidt0, BV, BBV}. Analogous definitions apply in the inhomogeneous case of $V_{N,\iota}(\ba,\bx,c)$, and similar results hold.

For targets given by a power law, the radii $cn^{-\frac{d}{r}}$ are thus the smallest ones to yield an infinite number of hits almost surely. A natural question is then to investigate statistics of these hits, which we call \emph{resonances}. 
In the present paper we address in this context the behavior of the resonances on average over $\ba$ and $\bx$,  
or on average over $\ba$ while $\bx$ is fixed at ${\bf 0}$ or fixed at random. 
Introduce the ``expected'' number of hits
\begin{align}\widehat{V}_{N,\iota}=\text{Vol} (B_\iota(1, d, r, c)) \ln N \end{align}
when $\bx$ and $\ba$ are uniformly distributed on corresponding tori. Let $\cN(m,\sigma^2)$ denote the normal distribution with mean $m$ and variance $\sigma^2$.  


\begin{theo}
 \label{ThCLT-U} 
Suppose  $(r,d)\neq (1,1),$ 
and let  $\ba$ be uniformly distributed on $\T^{dr}$. Then, 
\begin{align}\frac{U_{N,\iota}(\ba,c)-\widehat{V}_{N,\iota}}{\sqrt{\ln N}}\end{align}
converges in distribution to $\cN(0, \sigma_\iota^2)$ as $N\to\infty$
, where
\begin{align}
\label{Sigma12}
\sigma_1^2& =2 c^rd  \frac{\zeta(r+d-1)}{\zeta(r+d)}\mathrm{Vol}(\cB), & 
\sigma_2^2& =\frac{\pi^{r/2}}{\Gamma(\frac{r}{2}+1)} \sigma_1^2. 
\end{align}
where $\cB$ is the unit ball in $|\cdot|$-norm and Vol denotes the Euclidean volume.
\end{theo}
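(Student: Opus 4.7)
My plan is to write
\begin{align*}
U_{N,\iota}(\ba,c) = \sum_{0 \le |k| < N} \chi_k(\ba), \qquad \chi_k(\ba) := \one\bigl\{(k \cdot \alpha^j \bmod 1)_{j=1}^r \in B_\iota(k,d,r,c)\bigr\},
\end{align*}
and to analyze the centered sum $U_{N,\iota} - \E U_{N,\iota}$ by expanding each $\one_{B_\iota(k)}$ in its Fourier series on $\T^r$. Decomposing $\Z^d \setminus \{0\}$ along rays of primitive vectors, each $k \ne 0$ has a unique representation $k = pv$ with $v$ primitive in a chosen half-space $\cP^+$ and $p \in \Z \setminus \{0\}$, giving
\begin{align*}
U_{N,\iota} - \E U_{N,\iota} = \sum_{v \in \cP^+,\, |v|<N} S_v(N), \qquad S_v(N) := \sum_{p \in \Z \setminus \{0\},\, |pv|<N}\bigl(\chi_{pv}-\E\chi_{pv}\bigr).
\end{align*}
For distinct $v, v' \in \cP^+$ the integer vectors are $\Z$-linearly independent, so $(v \cdot \alpha^j, v' \cdot \alpha^j)$ is Haar-uniform on $\T^2$ for each $j$; consequently the Fourier cross-terms between $S_v$ and $S_{v'}$ vanish and the family $\{S_v\}_{v \in \cP^+}$ is pairwise uncorrelated, so $\mathrm{Var}(U_{N,\iota}) = \sum_v \mathrm{Var}(S_v(N))$.

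Each $\mathrm{Var}(S_v(N))$ is computed by a further Fourier expansion; using independence of the $r$ coordinates of $u_v := (v \cdot \alpha^j \bmod 1)_{j=1}^r$, the covariance $\mathrm{Cov}(\chi_{pv},\chi_{p'v})$ factors as a product of one-dimensional covariances whose non-vanishing Fourier modes are indexed by integer pairs $(m,m')$ with $mp + m'p' = 0$. The diagonal terms $p = p'$ dominate, contributing $\E\chi_{pv} \sim \mathrm{Vol}(B_\iota(1,d,r,c))\,|pv|^{-d}$; summing over $p$ and then over primitive $v$ with $|v| < N$, together with standard density estimates for primitive vectors in $\Z^d$, yields $\mathrm{Var}(U_{N,\iota}) \sim \sigma_\iota^2 \ln N$. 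The zeta ratio $\zeta(r+d-1)/\zeta(r+d)$ in $\sigma_\iota^2$ emerges from the identity $\sum_{n \ge 1}\varphi(n)/n^{r+d} = \zeta(r+d-1)/\zeta(r+d)$ when off-diagonal pairs $(p,p')$ are reparameterized by their $\gcd$; for $\iota = 2$ the factor $\pi^{r/2}/\Gamma(r/2+1)$ enters as the volume of the unit Euclidean ball in $\R^r$.

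The final step is asymptotic normality. Because any $n > d$ distinct primitive directions in $\Z^d$ are forced to be $\Z$-linearly dependent, the $S_v$'s are pairwise but not mutually independent, so a direct Lindeberg argument does not apply. Instead I would use the method of cumulants: compute $\kappa_k\bigl((U_N - \E U_N)/\sqrt{\ln N}\bigr)$ for $k \ge 3$ via the Fourier expansion. The non-vanishing joint cumulants of $\chi_{k_1},\dots,\chi_{k_k}$ are indexed by integer relations $\sum_i m^j_i k_i = 0$ in $\Z^d$ (one per $j$), i.e.\ by the ``resonance'' configurations among the $k_i$'s; these are combinatorially sparse, and uniform Fourier-tail estimates on $\one_{B_\iota(k)}$ yield $\kappa_k = o\bigl((\ln N)^{k/2}\bigr)$, forcing the limit to be Gaussian. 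The main obstacle is this cumulant analysis, particularly the uniform Fourier-tail estimate in the $\iota = 2$ case where the ball indicator has only polynomial Fourier decay. The excluded case $(r,d) = (1,1)$ is genuinely non-Gaussian, with fluctuations governed by the continued-fraction expansion of $\alpha$.
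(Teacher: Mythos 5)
Your Fourier-on-the-torus plus method-of-cumulants approach is genuinely different from the paper's, which passes through the Dani correspondence to rewrite $U_{N,\iota}$ as a Birkhoff sum $\sum_{n\leq\log_2 N}\cS(\one_{E_c})(g^n\Lambda_\ba\Z^{d+r})$ of a Siegel transform along a diagonal orbit in $SL_{d+r}(\R)/SL_{d+r}(\Z)$, and then invokes the abstract CLT of Theorem~\ref{ThCLTabs}, proved via characteristic functions, truncation, and filtrations built from representative partitions of $h_u$-orbit segments. Your primitive-ray decomposition, the pairwise uncorrelatedness of the $S_v$'s, and the variance sketch are sound and recover the $\zeta(r+d-1)/\zeta(r+d)$ ratio essentially as in Section~\ref{sec.variances}; note, though, that saying the diagonal $p=p'$ terms ``dominate'' is misleading --- they alone would give a ratio of $1$, and the off-diagonal coprime pairs contribute at the same order and are what produce the zeta factor.

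The genuine gap is in the final step. The method of cumulants requires $\kappa_k\bigl((U_{N,\iota}-\widehat V_{N,\iota})/\sqrt{\ln N}\bigr)\to 0$ for every $k\geq 3$, i.e.\ convergence of all normalized moments to Gaussian moments. This cannot hold here, because $U_{N,\iota}-\widehat V_{N,\iota}$ has a polynomial tail: in the lattice picture $\cS(\one_{E_c})(\cL)\asymp\fa(\cL)$ and $\mu(\fa>t)\asymp t^{-(d+r)}$ (Lemma~\ref{LmCoVolTail}), so $\P\bigl(U_{N,\iota}>t\bigr)\gtrsim(\ln N)\,t^{-(d+r)}$, coming from $\ba$ lying near unusually good rational points. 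Since $U_{N,\iota}\leq(2N)^d$, one gets $\E\bigl[(U_{N,\iota}-\widehat V_{N,\iota})^k\bigr]\gtrsim(\ln N)\,N^{d(k-(d+r))}\gg(\ln N)^{k/2}$ once $k>d+r$. Because the tail exponent $d+r$ is always finite, some cumulant always blows up, so the method of cumulants fails for every $(r,d)$, not just small ones; the CLT is a purely distributional statement compatible with these heavy tails. The paper circumvents this precisely by working with characteristic functions and the truncation $\Phi^{K_n}$ at level $K_n=n^{(1+\eps)/s}$, so that only $L^{2+\delta}$-type control (hypothesis (H1)) is needed rather than all moments. To salvage your framework you would have to abandon cumulants for a characteristic-function or martingale-block argument with an explicit truncation, which is essentially what Theorem~\ref{ThCLTabs} supplies.
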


\begin{rema}
The restriction $(r,d)\neq (1,1)$ above is necessary.    
In fact, it is shown in \cite{Ph, Sam} using continued fractions
that in that case the Central Limit Theorem (CLT) still holds for $U_N$, but the correct normalization should be $\sqrt{\ln N\ln \ln N}$
rather than $\sqrt{\ln N}.$
\end{rema}  

\begin{theo} \label{ThCLT-V} 
Let  $\ba$ and $\bx$ be uniformly distributed in $\T^{d r}$. Then,
\begin{align}\frac{V_{N,\iota}(\ba,\bx,c)-\widehat{V}_{N,\iota}}{\sqrt{\widehat{V}_{N,\iota}}}\end{align}
converges in distribution to $\cN(0,1)$ as $N\to\infty$.
\end{theo}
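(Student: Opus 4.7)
My plan is to prove Theorem \ref{ThCLT-V} by the method of moments, exploiting the decisive structural fact that the extra randomness in $\bx$ renders the hit events $E_k=\{(l_{x^j,\a^j}(k))_{j=1}^r\in B_\iota(k,d,r,c)\}$ pairwise independent across $k\in\Z^d\setminus\{0\}$. This pairwise independence is what collapses the limiting variance to the mean $\widehat V_{N,\iota}$ and removes the zeta-type correction visible in $\sigma_\iota^2$ of Theorem \ref{ThCLT-U}. For $k_1\neq k_2$ in $\Z^d$ and each coordinate $j$, a character $(m_1,m_2)\in\Z^2$ of $\T^2$ pulls back on $(x^j,\a^j)\in\T\times\T^d$ to $(m_1+m_2,\,m_1 k_1+m_2 k_2)\in\Z\times\Z^d$, which vanishes only at $(0,0)$ because $m_1+m_2=0$ combined with $k_1\neq k_2$ forces $m_1=m_2=0$. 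Hence $(x^j+k_1\cdot\a^j,\,x^j+k_2\cdot\a^j)$ is uniform on $\T^2$, and independence across $j$ lifts this to uniformity of the joint $r$-tuples on $\T^{2r}$. Setting $p_k=\text{Vol}(B_\iota(k,d,r,c))$ and $\xi_k=\one_{E_k}-p_k$, one obtains $\E V_{N,\iota}=\widehat V_{N,\iota}(1+o(1))$ and $\text{Var}(V_{N,\iota})=\sum p_k(1-p_k)=\widehat V_{N,\iota}(1+o(1))$, since $\sum p_k^2<\infty$.

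For higher moments, I would write $\xi_k=g_k(l_{x^1,\a^1}(k),\ldots,l_{x^r,\a^r}(k))$ with $g_k=\one_{B_\iota(k)}-p_k$ and expand $g_k$ in its Fourier series on $\T^r$ (zero constant term). Averaging over the uniform $(\ba,\bx)$ yields
\begin{equation*}
\E[\xi_{k_1}\cdots\xi_{k_p}]=\sum_{\bm^1,\ldots,\bm^p\in\Z^r\setminus\{0\}}\prod_i\hat g_{k_i}(\bm^i)\prod_{j=1}^r\one\!\Bigl[\textstyle\sum_i m^i_j=0,\ \sum_i m^i_j k_i=0\Bigr].
\end{equation*}
Thus each $j$-th column $M_j=(m^1_j,\ldots,m^p_j)\in\Z^p$ is forced into the resonance lattice $L(k_1,\ldots,k_p)=\{M\in\Z^p:\sum M_i=0,\ \sum M_i k_i=0\}$. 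For $p=2q$ even, the main contribution comes from $p$-tuples partitioned by a perfect matching with distinct values on the pair classes: on such tuples $L$ splits into the pair antidiagonal sublattices, and summation over $M_j$ collapses to the product of pairwise variances $\prod_l p_{k_l}(1-p_{k_l})$; summing over the $(2q-1)!!$ matchings and the pair values reproduces $(2q-1)!!\,\widehat V_{N,\iota}^q(1+o(1))$, the $2q$-th moment of $\cN(0,\widehat V_{N,\iota})$. Odd-moment main terms vanish for lack of pair structure.

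It remains to show that $p$-tuples carrying a genuine integer relation $\sum_i M_i k_i=0$ (with $\sum M_i=0$) not induced by a pair matching contribute $o(\widehat V_{N,\iota}^{p/2})$. A lattice-point count yields $O(N^{d(p-1)})$ such $p$-tuples of $k_i$'s in the ball of radius $N$ supporting a fixed relation, saving a factor of $N^d$ over the unconstrained count; combined with the size $p_k\sim|k|^{-d}$ and the standard Fourier decay of $\hat g_k(\bm)$ (product decay $\prod_j|m_j|^{-1}$ for $\iota=1$ after a short smoothing, and $\lesssim|\bm|^{-(r+1)/2}$ for Euclidean balls with $\iota=2$), one obtains the $o((\ln N)^{p/2})$ bound, giving convergence of all moments of $(V_{N,\iota}-\widehat V_{N,\iota})/\sqrt{\widehat V_{N,\iota}}$ to those of $\cN(0,1)$. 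The main obstacle is precisely this last step: the uniform-in-$p$ combinatorial bookkeeping that separates pair matchings from genuine resonances, together with clean Fourier-plus-lattice estimates for the latter, is the real technical work --- especially when $p>d+1$, so that some resonance is forced even among distinct $k_i$'s and the "generic" dimension count must be refined stratum by stratum.
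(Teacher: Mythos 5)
Your opening observations are correct and in fact mirror a remark the authors make at the end of Section \ref{sec.variances}: with $\bx$ random the events are pairwise independent (the character computation you give is right), which is exactly why the limiting variance collapses to the mean $\widehat V_{N,\iota}$ with no zeta factor. But the overall strategy --- the method of moments applied directly to $V_{N,\iota}$ --- cannot be completed, because the high central moments of $(V_{N,\iota}-\widehat V_{N,\iota})/\sqrt{\widehat V_{N,\iota}}$ do \emph{not} converge to Gaussian moments. The variable $V_{N,\iota}$ has polynomial (not sub-Gaussian) tails coming from well-approximable pairs $(\ba,\bx)$. Concretely, for $M\le N$ take $x^j\in[\eps,2\eps]$ and $|\a^j|\le \eps/M$ with $\eps\asymp M^{-d/r}$; then every $|k|\le M$ is a hit, so $V_{N,\iota}\ge c M^d$ on a set of measure $\gtrsim M^{-(d^2+dr+d)}$, i.e.\ $\P(V_{N,\iota}\ge\lambda)\gtrsim\lambda^{-(d+r+1)}$ for $\lambda$ up to $\asymp N^d$. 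Hence the contribution of this event alone to the $p$-th moment is $\gtrsim M^{dp-d(d+r+1)}$, which with $M=N$ tends to infinity (much faster than $(\ln N)^{p/2}$) once $p>d+r+1$. So "convergence of all moments to those of $\cN(0,1)$" is false, and no refinement of the resonance bookkeeping can rescue it: the obstruction lives in the moments themselves, not in your estimation of them. This is precisely why the paper never uses moments beyond order $2+\delta$: it passes through the Dani correspondence to sums of Siegel transforms $\tcS(f)(g^n\tcL)$ (whose summands are only in $L^p$ for $p<d+r$), truncates at level $K_n=n^{(1+\eps)/s}$ with $2<s<d+r$ (condition (H1), Lemma \ref{prop.ha1}), and proves the CLT via characteristic functions and a martingale-type block decomposition (Theorem \ref{ThCLTabs}).

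A secondary but still substantive issue: even if one truncates (say, discards the bad set of $(\ba,\bx)$ above, or caps the summands), the exact Fourier orthogonality over $\T^{dr}\times\T^r$ that your resonance-lattice formula relies on is destroyed, so the clean identity for $\E[\xi_{k_1}\cdots\xi_{k_p}]$ no longer holds and the whole combinatorial scheme must be rebuilt. And within that scheme, the step you yourself flag as "the real technical work" --- showing that genuinely resonant $p$-tuples contribute $o((\ln N)^{p/2})$, uniformly over the stratification by the shortest vector of $L(k_1,\dots,k_p)$, with only the non-absolutely-summable decay $\prod_j|m_j|^{-1}$ available for boxes --- is not an estimate one can wave at: for $p\ge 3$ every tuple with collinear $k_i$'s carries a bounded resonance vector (e.g.\ $M=(1,-2,1)$ for $k,2k,3k$), and these are exactly the configurations responsible for the strong triple dependence the authors point out. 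So the proposal identifies the right structural reason for the answer (pairwise independence $\Rightarrow$ variance $=$ mean) but the proposed proof mechanism breaks down at the level of third and higher moments.
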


The preceding theorems give CLTs in the cases of $\bx$ fixed to be $0$ or $\bx$ random. 
The CLT also holds for for almost every fixed $\bx$.   

\begin{theo} \label{ThCLT-V-as} 
Suppose $(r,d)\neq (1,1).$  For almost every $\bx$, 
if  $\ba$ is uniformly distributed in $\T^{d r}$, then 
$\frac{V_{N,\iota}(\ba,\bx,c)-\widehat{V}_{N,\iota}}{\sqrt{\widehat{V}_{N,\iota}}}$  converges in distribution to a normal  random variable with zero mean and variance one.
\end{theo}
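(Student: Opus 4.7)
The plan is to upgrade Theorem \ref{ThCLT-V} to an almost-sure statement in $\bx$ by an $L^2$ argument on the conditional characteristic function. Set
\[
T_N(\ba,\bx):=\frac{V_{N,\iota}(\ba,\bx,c)-\widehat{V}_{N,\iota}}{\sqrt{\widehat{V}_{N,\iota}}}, \qquad \phi_N(t;\bx):=\E_\ba\bigl[e^{i t T_N(\ba,\bx)}\bigr].
\]
Theorem \ref{ThCLT-V} says exactly that $\int_{\T^r}\phi_N(t;\bx)\,d\bx\to e^{-t^2/2}$. If I can additionally show
\[
\int_{\T^r}|\phi_N(t;\bx)|^2\,d\bx \longrightarrow e^{-t^2},
\]
then $\phi_N(t;\cdot)\to e^{-t^2/2}$ in $L^2(\T^r,d\bx)$. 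Since $|\phi_N|\le 1$, a diagonal extraction over a countable dense set $\{t_j\}\subset\R$ produces a full-measure set $X\subset\T^r$ and a single subsequence $N_k$ such that $\phi_{N_k}(t_j;\bx)\to e^{-t_j^2/2}$ for every $\bx\in X$ and every $j$. The uniform-in-$N$ Lipschitz bound $|\phi_N(t;\bx)-\phi_N(s;\bx)|\le |t-s|\sqrt{\E_\ba T_N^2}$, in which the second moment is $O(1)$ as a byproduct of the $L^2$ computation, extends pointwise convergence to all $t\in\R$; Lévy's continuity theorem then gives the Gaussian limit along the subsequence, and uniqueness of the limit together with the standard subsubsequence trick promotes it to convergence along the full sequence for a.e.\ $\bx$.

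The substantive step is the second-moment identity above, which is equivalent to a bivariate CLT: for independent $\ba,\ba'$ uniform on $\T^{dr}$ and $\bx$ uniform on $\T^r$,
\[
\bigl(T_N(\ba,\bx),\,T_N(\ba',\bx)\bigr)\ \Longrightarrow\ \cN(0,1)\otimes\cN(0,1).
\]
I would prove this by running the argument that underlies Theorem \ref{ThCLT-V} on the ``doubled'' system parametrized by $(\ba,\ba')$. The limiting covariance reduces, after integrating out $\bx$, to counting pairs $(k,k')$ with $0\le|k|,|k'|<N$ that produce a simultaneous near-resonance $\sum_i k_i\,\alpha_i^j\approx\sum_i k'_i\,{\alpha'}_i^j\pmod 1$ for every $j$; because $\ba$ and $\ba'$ are independent, this joint event factorizes in the limit, so the cross-covariance vanishes after normalization, while the marginal variances each converge to $1$ by Theorem \ref{ThCLT-V} applied to each coordinate separately.

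The expected main obstacle is precisely this joint CLT for two independent copies. One must rule out an anomalous enhancement analogous to the factor of two appearing in $\sigma_\iota^2$ in Theorem \ref{ThCLT-U}: that enhancement stems from the deterministic pairing of hits at $k$ and $-k$ caused by the symmetry $l_{0,\ba}(-k)=-l_{0,\ba}(k)$ when $\bx=0$, and no such pairing survives for independent $(\ba,\ba')$ once $\bx$ is integrated out. Converting this heuristic into a rigorous estimate requires re-running the same homogeneous-dynamics or Fourier/truncation input that proves Theorem \ref{ThCLT-V}, but now on the product of two copies of the relevant space of unimodular lattices, so that the two streams of resonances decouple in the limit. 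The exclusion $(r,d)\ne(1,1)$ is inherited from Theorem \ref{ThCLT-U} to guarantee that $\sqrt{\widehat{V}_{N,\iota}}=O(\sqrt{\ln N})$ is the correct normalization and that the second moments used in the equicontinuity step remain bounded.
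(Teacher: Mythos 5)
Your strategy (second moment of the conditional characteristic function, i.e.\ a bivariate CLT for two independent copies of $\ba$ sharing the same $\bx$) is a legitimate classical route to quenched CLTs, and it is genuinely different from the paper's, which works directly on the space of affine lattices: the paper proves a quantitative CLT (Theorem \ref{ThCLT-V-Lat-as-thick}) for initial measures supported on a fixed-$\bx$ unstable piece, after fattening that piece (Lemma \ref{LmFat}) and showing that all but an $O(N^{-r})$-measure set of base points admit a representative filtration (Lemma \ref{LmMostGood}); Borel--Cantelli then yields the a.e.\ statement. However, your write-up has two genuine gaps. First, the passage from $\int_{\T^r}|\phi_N(t;\bx)-e^{-t^2/2}|^2\,d\bx\to 0$ to a.e.\ convergence of the \emph{full} sequence is wrong as stated: $L^2$ convergence gives convergence in measure, hence a.e.\ convergence only along a subsequence, and the subsubsequence trick does not repair this because a.e.\ convergence is not induced by a topology (the exceptional null set depends on the subsequence; cf.\ the typewriter sequence). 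To close this you need a summable rate, $\int|\phi_{N_k}(t;\bx)-e^{-t^2/2}|^2\,d\bx\le C_t k^{-2}$ say, along a sequence $N_k$ dense enough that $\ln N_{k+1}-\ln N_k=o(\sqrt{\ln N_k})$, together with monotonicity of $V_N$ in $N$ to interpolate between the $N_k$; this is exactly the kind of quantitative control the paper builds in from the start and that your sketch never produces.

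Second, the bivariate CLT is not a soft consequence of Theorem \ref{ThCLT-V} ``run on the doubled system.'' Under the Dani correspondence the pair $(\ba,\bx,\ba',\bx)$ corresponds to an initial measure on $\tcM\times\tcM$ supported on a \emph{positive-codimension} submanifold of the unstable leaf of $g\times g$ (the two affine lattices share the same translation part $\bx$), which is precisely the difficulty that distinguishes Theorem \ref{ThCLT-V-Lat-as} from Theorem \ref{ThCLT-V-Lat} and that the paper handles with the fattening and representative-partition machinery. In particular the vanishing of the cross-covariance of $T_N(\ba,\bx)$ and $T_N(\ba',\bx)$ cannot be deduced from independence of $\ba$ and $\ba'$ alone --- the shared $\bx$ couples the two hit counts, and if that coupling were trivially negligible the theorem you are trying to prove would be immediate. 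So your reduction relocates, rather than resolves, the hard step.
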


\subsection{Plan of the paper.}
Using a by now standard approach of Dani correspondence 
(cf. \cite{Da, M-npoint, M-Mod1, AGT1, AGT2, APT, KSW})
we deduce our results about Diophantine approximations from appropriate limit theorems for homogeneous flows. 
Namely we need to prove a CLT for Siegel transforms of piecewise smooth functions; these limit theorems are formulated
in Section \ref{ScLat}. The reduction of the theorems of Section \ref{ScIntro} to those o  Section \ref{ScLat} is
given in Section \ref{ScDA-Lat}. The CLTs in the space of lattices are in turn deduced
from an abstract Central Limit Theorem (Theorem \ref{ThCLTabs}) for weakly dependent random variables
which is formulated and proven in Section \ref{ScAbstract}. In order to verify the conditions of 
Theorem \ref{ThCLTabs} for the problem at hand we need several results about regularity of Siegel transforms
which are formulated in Section \ref{ScH-SL} and proven in the appendix. In Section \ref{sec.proofs} we deduce our
Central Limit Theorems for homogeneous flows from the abstract Theorem \ref{ThCLTabs}.
Section \ref{sec.variances} contains the proof of the formula \eqref{Sigma12} on the variances. 
Section~\ref{ScGenGroup} discusses some applications of Theorem \ref{ThCLTabs} beyond the subject of Diophantine
approximation.

\subsection*{Acknowledgements}  D. D. was partially supported by the NSF.
I.V. appreciates the support of Fondation Sciences Math\'e\-matiques de Paris during his stay in Paris.

\section{Central Limit Theorems on the space of lattices.}
\label{ScLat}

\subsection{Notation}

We let $\bbG=SL_{d+r}(\R), \tbbG= SL_{d+r} (\R)\ltimes \R^{d+r} .$ 
The multiplication rule in $\tbbG$ takes form
$(A, a)(B, b)=(AB, a+Ab)$. We regard $\bbG$ as a subgroup of $\tbbG$ consisting of elements of the form
$(A, 0)$. We let $\bbL$ 
be the abelian subgroup of $\bbG$ consisting of matrices $\Lambda_\ba,$ 
and $\tbbL$ 
be the abelian subgroup of $\tbbG$ consisting of matrices $(\Lambda_\ba, (0, y))$ where $0$ is an origin in $\R^d,$
$y$ is an $r$-dimensional vector,
$\ba$ is a $d\times r$ matrix 
and  
\[ \Lambda_\ba=
\begin{pmatrix} \text{Id}_d & 0 \\ \ba & \text{Id}_r\end{pmatrix}.
\]
Let $\cM$ be the space of $d+r$ dimensional unimodular lattices and $\tcM$ be the space of $d+r$ dimensional unimodular affine lattices. We identify $\cM$ and $\tcM$ respectively with $\bbG/SL_{d+r}(\Z)$ and  $\tbbG/ SL_{d+r} (\Z)\ltimes \Z^{d+r}$.

We will need spaces $C^{\bs, \br}(\R^p),$  $C^{\bs, \br}(\cM)$,  and $C^{\bs, \br}(\tcM)$ 
of functions which can be well approximated by smooth functions, given $\bs, \br \geq 0$. Recall first that the space $C^{\bs}(\R^p)$ consists of functions $f\colon \R^p \to\R$ whose derivatives up to order $\bs$ are bounded. To define spaces $C^{\bs}(\cM)$ and $C^{\bs}(\tcM)$, fix bases for $\text{Lie}(\bbG)$ and $\text{Lie}(\tbbG)$; then, $C^{\bs}(\cM)$ and $C^{\bs}(\tcM)$ consist of functions whose derivatives corresponding to monomials of order up to $\bs$  in the basis elements are bounded (see Appendix for formal definitions). Now we define $C^{\bs, \br}$-norm on a space equipped with a $C^{\bs}$-norm by
\begin{align}
\|f\|_{C^{\bs,\br}} = \sup_{0 < \eps < 1} \sup_{\substack { f^- \le f \le f^+\\ \|f^+ - f^-\|_{L^1} <\eps}} \eps^{\br} (\|f^+\|_{C^{\bs}} + \|f^-\|_{C^{\bs}}).\label{Cs-L1}
\end{align}
Some properties of these spaces are discussed in the
Appendix.

Given a function $f$ on $\R^{r+d}$ we consider its Siegel transforms
$\cS:\cM\to\R$ and $\tcS:\tcM\to\R$ defined by
\begin{align}\cS(f)(\cL) & =\sum_{e\in \cL} f(e), &
 \tcS(f)(\tcL)& =\sum_{e\in \tcL} f(e). \end{align}
 We emphasize that Siegel transforms of smooth functions are
 never bounded but the growth of their norms at infinity is
 well understood, see Subsection \ref{sec.truncation}.

 \subsection{Results for the space of lattices}

 In this section we present general Central Limit Theorems for Siegel transforms.
 The reduction of Theorems \ref{ThCLT-U}, \ref{ThCLT-V}, \ref{ThCLT-V-as} to the results
 stated here will be performed in Section \ref{ScDA-Lat}.

 Let $f\in C^{\bs, \br}(\R^{d+r})$ be a non-negative function supported on a compact set which does not contain $0$.
 (The assumption that $f$ vanishes at zero is needed to simplify the formulas for the moments of its
   Siegel transform. See Proposition \ref{LmRogers}.)
Denote $\brf=\iint_{\R^{d+r}} f(x,y) dx dy.$

We say that a subset $S\subset \cM$ is \emph{$(K, \alpha)$-regular} if $S$ is a union of codimension 1 submanifolds
and there is a one-parameter subgroup $h_u\subset \bbL$ such that
$$ \mu(\cL: h_{[-\eps, \eps]} \cL\cap S\neq \emptyset)\leq K\eps^\alpha.$$
We say that a function $\rho\colon \cM \to \R$ is \emph{$(K, \alpha)$-regular} if $\supp(\rho)$ has a $(K, \alpha)$-regular
boundary and the restriction of $\rho$ on $\supp(\rho)$ belongs to $C^\alpha$ with
$$ \| \rho \| _{C^\alpha(\supp(\rho))}\leq K.$$
 $(K, \alpha)$-regular functions on $\tcM$ are defined similarly.

Let $\bbA$ be subgroup of $\bbG$ consisting of diagonal matrices. We use the notation $da$ for Haar measure on $\bbA$. 
We say that $\rho$ is \emph{$K$-centrally smoothable} if there is a positive function $\phi$ supported in a unit
neighborhood of the identity in  $\bbA$
such that  $\int_{\mathbb{A}} \phi(a) da=1$ and 
$$ \rho_\phi(\cL):=\int_{\mathbb{A}} \rho(a \cL) \phi(a) da $$
has $L^\infty$ norm less than $K.$ 
We say that a function $\rho$ on $\tcM$ is \emph{$K$-centrally smoothable} if
  $$\rho^*(\cL)=\sup_{\bx} \rho(\cL,\bx)$$ is a $K$-centrally smoothable function on $\cM.$
As before, we write $\cN(m, \sigma^2)$ for the normal distribution with $m$ and variance $\sigma^2$
and
``$\implies$'' stands for convergence in distribution. We write $g$ for a certain diagonal element of $\bbG$ and $\tbbG$ formally introduced in \eqref{eq:def_g}. 


\begin{theo}
 \label{ThCLT-U-Lat} 
Suppose that $(r,d)\neq (1,1).$  

(a) There is a constant $\sigma$ such that if $\cL$ is uniformly distributed on
$\cM$ then
$$ \frac{\sum_{n=0}^{N-1} \cS(f)(g^n \cL)-N\brf}{\sigma \sqrt{N}}  \implies \cN(0,1) $$
as $N\to\infty$.

(b) Fix constants $C,u, \alpha, \eps$ with $\eps<\frac{1}{2}.$
Suppose that $\cL$ is distributed according to
a density $\rho_N$ which is $(CN^u, \alpha)$-regular and $C$-centrally smoothable. Then
$$ \frac{\sum_{n=N^\eps}^{N-1} \cS(f)(g^n \cL)-N\brf}{\sigma \sqrt{N}}  \implies \cN(0,1)   $$
as $N\to\infty$. 
\end{theo}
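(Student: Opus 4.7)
The plan is to apply the abstract Central Limit Theorem \ref{ThCLTabs} to the sequence
\begin{align*}
X_n := \cS(f)(g^n \cL) - \brf.
\end{align*}
The two main inputs will be exponential mixing of $g$ on $\cM$, which follows from the spectral gap of $L^2_0(\cM)$, and the regularity results of Section \ref{ScH-SL} that allow this mixing to be applied to Siegel transforms despite their unboundedness near the cusp.

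The first and most serious obstacle is precisely this unboundedness: $\cS(f)(\cL)$ blows up when $\cL$ has short nonzero vectors, so $X_n \notin L^\infty$. To overcome it I will introduce a truncation at a height $H = H_N$ that grows as a slow power of $N$, writing $\cS(f) = \cS^H(f) + \cS_H(f)$ where $\cS^H(f)$ is supported on the compact part $\{\cL : \cS(f)(\cL) \le H\}$ and $\cS_H(f)$ on its complement. Smooth cutoffs of this type belong to $C^{\bs,\br}(\cM)$ uniformly in $H$ up to a power-of-$H$ loss in the $C^{\bs}$ norm, which is precisely what the mixed norm \eqref{Cs-L1} is designed to exploit. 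For the tail, the Rogers-type moment bound (Proposition \ref{LmRogers}) together with its extensions in Section \ref{ScH-SL} will yield
\begin{align*}
\E\Bigl[\Bigl(\sum_{n=0}^{N-1} \cS_H(f)(g^n\cL)\Bigr)^2\Bigr] = o(N)
\end{align*}
as long as $(r,d)\neq (1,1)$; this is exactly where the exceptional case enters, since for $d=r=1$ the Siegel transform has a heavy enough tail under Haar measure to generate the extra $\sqrt{\ln\ln N}$ factor flagged in the remark following Theorem \ref{ThCLT-U}.

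With the truncation in place, I will verify the hypotheses of Theorem \ref{ThCLTabs} for $Y_n := \cS^H(f)(g^n \cL) - \E[\cS^H(f)]$. These reduce to decay of correlations and higher-moment control for Hölder-type observables, and both follow from exponential mixing of $g$ against $C^{\bs,\br}$ functions combined with Rogers-type bounds on fourth moments. The limiting variance will be identified as
\begin{align*}
\sigma^2 = \sum_{k \in \Z} \cov\bigl(\cS(f)(\cL),\; \cS(f)(g^k\cL)\bigr),
\end{align*}
with absolute convergence of the series coming from the same mixing-plus-moment estimates, and independence of the truncation in the limit $H_N\to\infty$. This handles part (a).

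For part (b) the initial distribution is not Haar, but the $K$-centrally smoothable hypothesis gives $L^\infty$ control on $\rho_N$ after averaging along $\bbA$, while $(CN^u,\alpha)$-regularity confines its singularities to a set of small measure. Since $g\in\bbA$ and the $\bbA$-action on $\cM$ is exponentially mixing against smooth observables, after $n \ge N^\eps$ iterations the pushforward $(g^n)_*\rho_N$ will be $C^{\bs,\br}$-close to Haar with an error smaller than any negative power of $N$; the assumption $\eps < 1/2$ ensures that excising the first $N^\eps$ terms changes the normalized sum by $o(1)$. Part (a) then applies to the evolved measure and yields the same $\sigma$. The delicate point, and the main technical difficulty throughout, will be calibrating $H_N$ against the regularity exponents $(u,\alpha,\bs,\br)$ so that the truncation error, the non-stationarity correction, and the mixing rate are all simultaneously small on the scale $\sqrt{N}$; this balance is what drives the permissible choices in the statement.
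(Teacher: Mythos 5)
Your truncation scheme, the identification of where $(r,d)\neq(1,1)$ enters (the tail of $\cS(f)$ under Haar measure is too heavy when $d+r=2$), and the Green--Kubo form of $\sigma^2$ all match the paper. But there is a genuine gap at the heart of the argument: you say that the hypotheses of Theorem \ref{ThCLTabs} ``reduce to decay of correlations and higher-moment control,'' to be supplied by exponential mixing of $g$. They do not. Conditions (H2)--(H4) are statements about \emph{conditional} expectations $\E(\hxi_{l+k}^K\mid\cF_l)$ and $\E(\hxi_{l+k}^K\hxi_{l+k'}^K\mid\cF_l)$ relative to a filtration $\cF_l$ that must first be constructed, and two-point correlation decay against Haar measure gives no information about conditional expectations along the atoms of an unspecified filtration; indeed no CLT follows from mixing alone. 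The paper's proof spends most of its effort exactly here: the filtration is built from partitions of $\cM$ into short segments of a one-parameter unipotent orbit $h_u\subset\bbL$ (Rudolph's theorem is invoked to get a nested family of such partitions), and the verification of (H3)--(H4) rests on the ``representative partition'' machinery of Proposition \ref{prop.rep}, whose dynamical input is polynomial mixing of the \emph{unipotent} flow $h_u$ — used to show that $g^{k}$-images of most partition atoms equidistribute — together with the $C^{\bs,\br}$ bounds on $\Phi^K$ and on the products $\Phi^K\cdot(\Phi^K\circ g^j)$ from Lemma \ref{CrHs-Cs}. None of this is present in your proposal, and without it the appeal to Theorem \ref{ThCLTabs} is vacuous.

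A second, smaller gap concerns part (b). The claim that $(g^n)_*\rho_N$ becomes ``$C^{\bs,\br}$-close to Haar'' after $n\ge N^\eps$ steps is false as stated: since $g$ preserves $\mu$, the pushforward has density $\rho_N\circ g^{-n}$, whose norms do not decay; what one gets from mixing is only weak-$*$ equidistribution, which does not transfer a distributional limit theorem for the full Birkhoff sum (an unbounded, highly discontinuous functional of the trajectory). The paper instead keeps the original measure and verifies the density hypotheses (D1)--(D3) of Theorem \ref{ThCLTabs}(b) directly — (D1) from the $C^\alpha$ bound, (D2) from the $(CN^u,\alpha)$-regularity of $\partial(\supp\rho_N)$ along the $h_u$-direction, and (D3) from the tail bounds of Lemma \ref{prop.ha1} for centrally smoothable densities — and the abstract theorem absorbs the non-stationarity inside the block argument via Lemma \ref{measurability}. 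You would need to replace your heuristic with an argument of this type.
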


\begin{theo}
 \label{ThCLT-V-Lat} 
(a) Let $\tcL$ be uniformly distributed on $\tcM.$ 
Then there is a constant $\tsigma$ such that 
$$ \frac{\sum_{n=0}^{N-1} \tcS(f)(g^n \tcL)-N\brf}{ \tsigma \sqrt{N}}  \implies \cN(0,1)  $$
as $N\to\infty$. 

(b) Fix constants $C,u, \alpha, \eps$ with $\eps<\frac{1}{2}.$
Suppose that $\tcL$ is distributed according to
a density $\rho_N$ which is $(CN^u, \alpha)$-regular and $C$-centrally smoothable. Then
$$ \frac{\sum_{n=N^\eps}^{N-1} \tcS(f)(g^n \tcL)-N\brf}{\tsigma \sqrt{N}}  \implies \cN(0,1)   $$
as $N\to\infty$. 
\end{theo}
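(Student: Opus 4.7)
The plan is to derive Theorem \ref{ThCLT-V-Lat} as an application of the abstract CLT (Theorem \ref{ThCLTabs}) with the sequence $X_n:=\tcS(f)(g^n\tcL)-\brf$, using as input the regularity and decorrelation properties of Siegel transforms assembled in Section \ref{ScH-SL}. The argument is essentially parallel to the one for Theorem \ref{ThCLT-U-Lat}, so I would first prove part (a) in full and then explain how part (b) is obtained by a reduction.

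For part (a), the first step is Siegel's identity on the space of affine lattices, which gives $\int_{\tcM}\tcS(f)\,d\mu=\brf$, so the $X_n$ are centered under Haar measure. The second step is a Rogers-type second-moment formula on $\tcM$: integrating $\tcS(f)(\tcL)\tcS(f)(g^n\tcL)$ unfolds to an integral of $f\otimes f$ against the push-forward of Haar by $(\id,g^n)$, whose contribution decays exponentially in $n$ because of the hyperbolic action of $g$. Summing these terms gives a finite $\tsigma^2$, and iterating the unfolding to higher order together with the tail estimates for Siegel transforms recalled in Subsection \ref{sec.truncation} supplies the bounded-cumulant / weak-dependence hypotheses that Theorem \ref{ThCLTabs} demands. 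The $C^{\bs,\br}$ regularity of $f$ is precisely what is needed to produce smooth approximations $f^{\pm}$ with $L^1$-small gap and controlled $C^{\bs}$ norms, so that decay of matrix coefficients for the $\bbG\ltimes\R^{d+r}$-action, applied to the $C^{\bs}$-norm of $\tcS(f^\pm)$ restricted to a suitable compact part of $\tcM$, yields the required decorrelation rate.

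For part (b), the initial density is no longer Haar, and the idea is that after time $N^\eps$ the distribution of $g^{n}\tcL$ is already close enough to Haar (in the dual norm that appears in the decorrelation estimates) to run the same computation. The $C$-central smoothability of $\rho_N$ allows one to replace $\rho_N$ by $\rho_{N,\phi}$ at a harmless cost: since $\phi$ is supported in $\bbA$ and $g\in\bbA$, one has $(g^n)_*\rho_{N,\phi}\le K\cdot (g^n)_*\mathbf{1}$ uniformly in $n$, so the push-forward density stays bounded in $L^\infty$. The $(CN^u,\alpha)$-regularity provides Hölder control of level sets, which together with the exponential mixing gained during the initial window $[0,N^\eps]$ upgrades the $L^\infty$ bound to closeness to $1$ in a sufficiently strong norm. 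Once this is in place, the variance and cumulant computations from part (a) go through verbatim for the truncated sum starting at $n=N^\eps$, while the polynomial loss $N^u$ is absorbed by the exponential mixing gain $e^{-cN^\eps}$.

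The main technical obstacle I expect is the unboundedness of $\tcS(f)$ combined with the non-Haar initial measure in part (b). Controlling high moments of $\sum X_n$ uniformly in $\rho_N$ requires Siegel-type tail bounds on $\tcM$ whose integrability constants hold not only for Haar but for any $(CN^u,\alpha)$-regular and centrally smoothable density. This is the place where the careful interplay between the abstract machinery of Section \ref{ScAbstract} and the quantitative homogeneous-dynamics estimates of Section \ref{ScH-SL} matters most, and I anticipate that the bulk of the work in Section \ref{sec.proofs} is devoted to establishing such uniform moment bounds and to verifying that the push-forwards $(g^{N^\eps})_*\rho_N$ lie in a class to which the part (a) argument applies with universal constants.
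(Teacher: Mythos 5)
Your overall strategy --- feed $\xi_n=\tcS(f)(g^n\tcL)$ into the abstract Theorem~\ref{ThCLTabs} and verify (H1)--(H4) in parallel with the linear-lattice case --- is exactly the paper's, but the proposal misses the one place where Theorem~\ref{ThCLT-V-Lat} is \emph{not} parallel to Theorem~\ref{ThCLT-U-Lat}: it carries no restriction $(r,d)\neq(1,1)$. For $(r,d)=(1,1)$ the generic Siegel-transform tail bounds you invoke give only $\E(\Phi-\Phi^K)=O(K^{-(d+r-1)})=O(K^{-1})$ and $\E((\Phi-\Phi^K)^2)=O(K^{-(d+r-2)})=O(1)$ (these are \eqref{prop.L1App}--\eqref{prop.L2App}), which do not yield (H1b) for any $s>2$, and the truncation step \eqref{truncated} in the abstract theorem then breaks down. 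The paper's fix is specific to \emph{affine} lattices: averaging over the translation part $\bx$ gains an extra power of $\fa(\cL)$ (estimate \eqref{Impr1+1}, proved by a geometric argument about how the translates of a lattice with a short vector meet a fixed ball), upgrading the bounds to \eqref{prop.L1App.rd1}--\eqref{prop.L2App.rd1}, i.e.\ $O(K^{-2})$ and $O(K^{-1})$, which restores (H1) with some $s>2$. Without this input your argument proves only the $(r,d)\neq(1,1)$ case, and hence not Theorem~\ref{ThCLT-V}.

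Two secondary points. First, the hypotheses of Theorem~\ref{ThCLTabs} are not cumulant bounds but conditional-expectation estimates relative to a filtration; the paper builds that filtration from ``representative'' partitions into $h_u$-orbit segments whose $g^n$-pushforwards equidistribute (Proposition~\ref{prop.rep}, resting on polynomial mixing of the unipotent flow and Rudolph's theorem). Your ``decay of matrix coefficients'' is the right analytic input, but the construction of $\cF_l$ is the actual content of verifying (H2)--(H4) and should not be elided. Second, for part (b) the paper does not show that $(g^{N^\eps})_*\rho_N$ approaches Haar measure; instead it checks conditions (D1)--(D3) of Theorem~\ref{ThCLTabs}(b) directly --- (D1) from the $L^\infty$ bound implied by $(CN^u,\alpha)$-regularity, (D2) from H\"older continuity of $\rho_N$ on partition elements away from $\partial(\supp\rho_N)$, and (D3) from the fact that the tail bounds of Lemma~\ref{prop.ha1} persist for centrally smoothable densities --- with the comparison of $\E_n(\cdot\,|\cF_l)$ to $\E(\cdot\,|\cF_l)$ handled once and for all inside the abstract theorem (Lemma~\ref{measurability}). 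Your route via effective equidistribution of the pushed-forward density is plausible but is a genuinely harder statement that the paper's architecture is designed to avoid.
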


Let $\tcD$ be an unstable rectangle, that is 
$$\tcD=\{(\Lambda_\ba, (0, \bx))\tcL_0\}_{(\ba, \bx)\in \bbR_1\times \bbR_2} $$
where $\tcL_0$ is a fixed affine lattice and $\bbR_1$ and $\bbR_2$ are rectangles
in $\R^{d\times r}$ and $\R^r$ respectively.  Consider a partition $\Pi$ of
$\tcD$ into $\bbL$-rectangles. Thus elements of $\Pi$ are of the form
$$ \{(\Lambda_\ba, (0, \bx_0))\tcL_0\}_{\ba\in \bbR_1} $$
for some fixed $\bx_0.$

\begin{theo} \label{ThCLT-V-Lat-as} 
Suppose that $(r,d)\neq (1,1).$ Then for each unstable cube $\tcD$ and for almost every ${\bar \cL}\in \tcD$, 
if $\tcL$ is 
uniformly distributed in $\Pi(\brcL)$, then
$$ 
\frac{\sum_{n=0}^{N-1} \tcS(f)(g^n \tcL)-N\brf}{\tsigma \sqrt{N}}  \implies \cN(0,1)$$
as $N\to\infty$. 
\end{theo}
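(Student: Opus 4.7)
The plan is to combine an annealed CLT along smoothly weighted families of $\bbL$-fibers (deduced from Theorem \ref{ThCLT-V-Lat}(b)) with a variance estimate that decorrelates the conditional expectations on different fibers, and then upgrade to almost-sure convergence via Borel--Cantelli. Geometrically, $\tbbL$ is the unstable subgroup of $g$, with the $\bbL$ factor forming the \emph{strong} unstable direction (the fibers of $\Pi$) and the affine $\bx$-direction forming a \emph{weak} unstable direction transverse to them, while the directions in $\tbbG$ complementary to $\tbbL$ are stable or neutral under $g$.

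\textbf{Step 1 (Annealed CLT with smooth transverse weight).} Given a smooth density $\rho$ on the transverse parameter space $\bbR_2$, form the mixed distribution $\mu_\rho = \int_{\bbR_2} \rho(\bx_0)\, \mu_{\Pi(\bx_0)}\, d\bx_0$ on $\tcD$. Thicken $\mu_\rho$ by a small $\delta$-ball in the directions of $\tbbG$ complementary to $\tbbL$ to obtain a density $\rho_\delta$ on $\tcM$ whose $(K,\alpha)$-regularity and central-smoothability constants are uniform in $N$, so that Theorem \ref{ThCLT-V-Lat}(b) applies to $\rho_\delta$. Because the complementary directions are stable or neutral, the thickening is contracted towards $\tbbL$ during the first $N^\eps$ steps of the dynamics, and the Hölder regularity of $\tcS(f)$ established in Section \ref{ScH-SL} ensures that the thickening perturbs
\[ S_N(\tcL) := \frac{1}{\tsigma\sqrt{N}}\bigl(\sum_{n=N^\eps}^{N-1} \tcS(f)(g^n\tcL) - N\brf\bigr) \]
by at most $o(1)$ in probability. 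Letting $\delta\to 0$ after $N\to\infty$, one concludes $\int \psi(S_N)\, d\mu_\rho \to \E[\psi(\cN(0,1))]$ for every bounded continuous $\psi$ and every smooth $\rho$.

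\textbf{Step 2 (From weak convergence to a.e.).} Setting $F_N(\bx_0) = \int \psi(S_N)\, d\mu_{\Pi(\bx_0)}$, Step 1 gives $\int \rho F_N\, d\bx_0 \to \E[\psi(\cN(0,1))]$ for every smooth $\rho$; combined with $|F_N|\leq \|\psi\|_\infty$, this yields $F_N \to \E[\psi(\cN(0,1))]$ in $L^p(\bbR_2)$ for every $p<\infty$, hence a.e.\ convergence along a subsequence. To upgrade to a.e.\ along the full sequence, establish a power-saving variance bound
\[ \bigl\| F_N - \E[\psi(\cN(0,1))] \bigr\|_{L^2(\bbR_2)}^2 \leq C N^{-\beta} \]
for some $\beta>0$. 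Expanding the square reduces to a correlation estimate between $\psi(S_N(\tcL))$ and $\psi(S_N(\tcL'))$ for $\tcL\in\Pi(\bx_0)$, $\tcL'\in\Pi(\bx_0')$: under $g^n$ the two iterates remain translates of each other in the weak unstable $\bx$-direction with offset $|\bx_0 - \bx_0'|\, e^{\kappa n}$, so effective mixing of $g^n$ on $\tcM$ (the same ingredient driving part (b)) combined with Rogers-type moment bounds on Siegel transforms forces polynomial decay of correlations, which is enough to sum to the desired bound. Borel--Cantelli along $N_k = \lfloor k^{2/\beta}\rfloor$ yields a.e.\ convergence on $\{N_k\}$; passing to arbitrary $N$ is routine since $S_{N+1}-S_N$ involves only one new Siegel transform term plus a multiplicative $O(1/N)$ renormalization, both controlled on the fiber by moment bounds.

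The main obstacle is the power-saving variance estimate in Step 2. One must combine the effective mixing of $g^n$ on $\tcM$ with the Rogers-type moment bounds and the Hölder regularity of $\tcS(f)$ from Section \ref{ScH-SL}, uniformly in $N$, and while respecting that $\tcS(f)$ is unbounded due to the cusp of $\tcM$. Producing quantitative decorrelation along the \emph{weak} unstable direction that is strong enough to pass through the Siegel integrability estimates with a power-saving rate is the technically delicate ingredient of the proof.
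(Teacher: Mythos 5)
Your route is genuinely different from the paper's: the paper does not prove an annealed CLT and then decorrelate fibers. Instead it fattens each fiber $\Pi(\tcL^*)$ by $N^{-\beta}$ in the stable/neutral directions (Lemma \ref{LmFat}), and then runs the abstract Theorem \ref{ThCLTabs} \emph{directly with respect to the conditional measure on the fattened fiber}: (H1) comes from the fiberwise truncation bounds of Lemma \ref{truncation.xfix}, and (H2)--(H4) come from the existence of representative partitions inside $\hPi(\tcL^*)$ for all $\tcL^*$ outside a set of measure $O(N^{-r})$ (Lemma \ref{LmMostGood}). The quantitative exceptional set is then summed over $N$ by Borel--Cantelli. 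In other words, the paper never needs any cross-fiber decorrelation; the equidistribution of a single expanded $h_u$-arc supplies everything.

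There are two concrete problems with your Step 2. First, the assertion that weak convergence $\int\rho F_N\,d\bx_0\to c$ for all smooth $\rho$ together with $|F_N|\le\|\psi\|_\infty$ gives $F_N\to c$ in $L^p$ is false (take $F_N(\bx)=\sin(N\bx)$); bounded weak-$*$ convergence never upgrades to strong convergence without an additional input, so your entire argument rests on the power-saving variance bound. Second, that bound is precisely the content of the theorem and your justification for it does not go through as stated. Writing $\mathrm{Var}_{\bx_0}(F_N)=\E_{\bx_0}\E_{\ba,\ba'}\left[\psi(S_N(\ba,\bx_0))\psi(S_N(\ba',\bx_0))\right]-\E_{\bx_0}\E_{\bx_0'}\E_{\ba,\ba'}\left[\psi(S_N(\ba,\bx_0))\psi(S_N(\ba',\bx_0'))\right]$, you must compare pair correlations of the functionals $\psi(S_N)$ on the same fiber versus on independent fibers. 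Mixing of $g^n$ on $\tcM$ controls correlations of a \emph{fixed} observable under the flow; it says nothing directly about correlations of $\psi(S_N)$, which depends on the whole orbit segment $n\in[N^\eps,N]$ and is exactly the object whose conditional law you are trying to pin down — the argument is circular unless you re-develop a conditional (fiberwise) limit theorem, which is what the paper's representative-partition machinery does. In addition, for $n\lesssim\log(1/|\bx_0-\bx_0'|)$ the two orbits have not separated and there is no decay at all, and all moment manipulations must pass through the truncation $\Phi^K$ since $\tcS(f)$ is unbounded; none of this is addressed. As written, the proposal identifies the right difficulty but does not supply the idea that resolves it.
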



The explicit calculation of
$\sigma$ and $\tsigma$ when $f$ is an indicator functions (the case needed for 
Theorems \ref{ThCLT-U}-\ref{ThCLT-V-as})
will be given in Section \ref{sec.variances}. 



\begin{rema}
Central Limit Theorems for partially hyperbolic translations on homogeneous spaces are proven
in \cite{D} (for bounded observables) and in \cite{LB} (for $L^4$ observables).
(See also \cite{S, Rat} for important special cases).
It seems possible
to prove Theorem \ref{ThCLT-U-Lat} for sufficiently large values of $d+r$ by verifying the conditions
of \cite{LB}. Instead, we prefer to present in the next section an abstract result which will later be
applied to derive Theorems \ref{ThCLT-U-Lat}, \ref{ThCLT-V-Lat}, and \ref{ThCLT-V-Lat-as}.
We chose this approach for three reasons. First, this will make the paper self contained.
Second, we replace the $L^4$ assumption of \cite{LB} by a weaker $L^{2+\delta}$ assumption
which is important for small $d+r.$ Third, our approach allows to give a unified proof for
Theorems \ref{ThCLT-U-Lat}, \ref{ThCLT-V-Lat}, and~\ref{ThCLT-V-Lat-as}.  
\end{rema}

\section{Diagonal actions on the space of lattices and Diophantine approximations.} 
\label{ScDA-Lat}

In this section we reduce Theorem \ref{ThCLT-U},  \ref{ThCLT-V},  
and \ref{ThCLT-V-as}
to Theorems \ref{ThCLT-U-Lat}--\ref{ThCLT-V-Lat-as}.

To fix our notation we consider $U_{N,1}$ and $V_{N,1},$
the analyis of $U_{N,2}$ and $V_{N,2}$ being similar.
We also drop the extra subscript and write
$U_{N,1}$ and $V_{N,1}$ as $U_N$ and $V_N$, respectively, until the end of this section.

In Subsections \ref{sec.dani} and \ref{sec.measure} we explain how to reduce Theorem \ref{ThCLT-U}
to Theorem \ref{ThCLT-U-Lat}. The reductions of Theorems \ref{ThCLT-V} and \ref{ThCLT-V-as}
to Theorems \ref{ThCLT-V-Lat} and \ref{ThCLT-V-Lat-as} require only minor modifications which will be
detailed in Section \ref{SSRed-V}.

\subsection{Dani correspondence.}
\label{sec.dani}
In this subsection we use the Dani correspondence principle to reduce the problem to a CLT for 
the action of diagonal elements on the space of lattices of the form $\Lambda_{\bf a}$ where  $\ba$ is random.

Let $\ba$ be the matrix with rows $\a_i \in \R^d$, $i=1,\ldots,r$. 
For $p \in \N$ and $t \in \R$, we denote the $p\times p$ diagonal matrix 
\begin{align}D_p(t)= 
 \begin{pmatrix}
  2^t\\
  & \ddots\\
  && 2^t
 \end{pmatrix}.
\end{align}
We then consider the following matrices
\begin{align}
g & =
\begin{pmatrix}
 D_d(-1) & 0\\ 0 &D_r(\frac dr)
\end{pmatrix},
&
\Lambda_{\ba} & = 
\begin{pmatrix}
 \text{Id}_d &0\\
 \ba & \text{Id}_r
\end{pmatrix}.
\label{eq:def_g}
\end{align}
Let $\phi$ be the indicator of the set 
\begin{equation} \label{eq.Ec}
  E_c:=\left\{
  (x,y) 
  \in\R^{d}\times\R^r\mid
  |x| \in [1,2],
  |x|^{d/r} y_j \in [0,c] \text{ for }j=1,\dots,r\right\}
\end{equation} 
and consider its Siegel transform $\Phi=\cS(\phi)$.

Now $n=(n_1,\ldots,n_d)$ with $|n|\le N$ contributes to $U_N(\ba,c)$
(from \eqref{Un}) precisely when there exists $(m_1,\ldots,m_r)\in \Z^r$ such that 
\begin{equation} \left(\sum_{i=1}^d n_i \a_{1,i} +m_1,\ldots,\sum_{i=1}^d n_i \a_{r,i} +m_r\right)  \in B(n,d,r,c) \label{hit1}. \end{equation}
Clearly such a vector $(m_1,\ldots,m_r)$ is unique. 
%
It is elementary to see that \eqref{hit1} holds if and only if 
\begin{equation}  g^t \Lambda_\ba (n_1,\ldots,n_d,m_1,\ldots,m_r) \in E_c\label{hit2}. \end{equation}
for some integer $t \le 2^{[\log_2 N]}$.

From \eqref{hit1}--\eqref{hit2} we obtain  
\begin{lemm}
\label{LmRed1}
  For each $\eps>0$
\[U_N(\ba,c) = \sum_{t=[(\log_2 N)^\eps]}^{[\log_2 N]} \Phi\circ g^t(\Lambda_\ba \Z^{d+r})+R_N,\] 
with 
\[{\left\| R_N \right\|_{L^1(\ba)}} \ll (\log_2 N)^\eps\] 
and $L^1(\ba)$ denoting the $L^1$-norm with respect to the Lebesgue measure on the unit cube in
$\R^{d\times r}.$
  \end{lemm}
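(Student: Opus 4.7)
The strategy is a direct application of the Dani correspondence, with the truncation error controlled by a Fubini-type computation of the mean of the Siegel transform against the Haar measure on $\ba \in [0,1]^{d\times r}$.

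First, I would verify the equivalence $\eqref{hit1}\Leftrightarrow\eqref{hit2}$ by direct computation. For $(n,m)\in\Z^d\times\Z^r$ one has $\Lambda_\ba(n,m)=(n,\ba n+m)$, and hence $g^t\Lambda_\ba(n,m)=(2^{-t}n,\,2^{td/r}(\ba n+m))$. Membership in $E_c$ then amounts to $|n|\in[2^t,2^{t+1}]$ together with $|n|^{d/r}(\ba n+m)_j\in[0,c]$ for $j=1,\dots,r$, which is exactly the hit condition \eqref{hit1} for $n$ in the shell $|n|\in[2^t,2^{t+1}]$. Since the target has diameter less than $1/2$ for $|n|$ large, the integer vector $m$ realizing the hit is unique, so the correspondence $n\leftrightarrow(n,m)\in\Lambda_\ba\Z^{d+r}$ is a bijection between contributions to $U_N(\ba,c)$ (up to a bounded number of small $n$) and nonzero vectors of $\Lambda_\ba\Z^{d+r}$ that land in some $g^{-t}E_c$.

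Second, for each $n$ with $1\le|n|<N$ there is a unique $t_n:=[\log_2|n|]\in\{0,\dots,[\log_2 N]\}$ with $|n|\in[2^{t_n},2^{t_n+1})$, so summing the Siegel transforms $S_t(\ba):=\Phi\circ g^t(\Lambda_\ba\Z^{d+r})$ over $t=0,\dots,[\log_2 N]$ recovers $U_N$ up to three sources of error: (i) vectors with $|n|$ on a power-of-$2$ sphere are counted in two consecutive shells; (ii) the top shell $t=[\log_2 N]$ includes a few $n$ with $|n|\ge N$; (iii) restricting to $t\ge[(\log_2 N)^\eps]$ discards the small-$|n|$ contributions. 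The first two are deterministically $O(1)$ or at worst $L^1(\ba)$-bounded by an absolute constant (a Rogers-type mean argument for the shell $[2^{[\log_2 N]},2^{[\log_2 N]+1}]$), so they are absorbed in $R_N$.

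The main term of $R_N$ comes from source (iii) and is estimated by Fubini. For any $t$, since $n\neq 0$ implies that $\ba\mapsto\ba n\pmod{\Z^r}$ pushes Lebesgue measure on $[0,1]^{d\times r}$ forward to Lebesgue measure on $\T^r$, summing over $m\in\Z^r$ unfolds to an integral over $\R^r$, yielding
\begin{equation*}
\int_{[0,1]^{d\times r}} S_t(\ba)\,d\ba \;=\; 2^{-dt}\!\!\!\!\sum_{n:\,|n|\in[2^t,2^{t+1}]}\int_{\R^r}\!\phi(2^{-t}n,z)\,dz \;=\; c^r\!\!\!\!\sum_{n:\,|n|\in[2^t,2^{t+1}]}\!\!\!\! |n|^{-d},
\end{equation*}
which is bounded by a constant independent of $t$ (comparison with the radial integral of $|x|^{-d}$ on a dyadic shell). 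Summing over the at most $(\log_2 N)^\eps$ excluded values of $t$ then gives $\|R_N\|_{L^1(\ba)}\ll(\log_2 N)^\eps$.

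The main obstacle is bookkeeping rather than analysis: one must make sure the bijection between hits and nonzero lattice vectors is clean (uniqueness of $m$, unique dyadic shell $t_n$), and that the boundary coincidences on the spheres $|n|=2^t$ as well as the overshoot past $N$ in the top shell contribute only an $O(1)$ error in $L^1(\ba)$ so that they can be safely absorbed into $R_N$ without enlarging the final bound beyond $(\log_2 N)^\eps$.
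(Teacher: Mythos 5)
Your proposal is correct and follows essentially the same route as the paper: the heart of both arguments is that for each fixed nonzero $n$ the form $\ba\mapsto \ba n \bmod \Z^r$ equidistributes Lebesgue measure, so the measure of $\ba$ producing a hit is $\ll |n|^{-d}$, and summing $|n|^{-d}$ over the excluded small-$|n|$ region gives the $(\log_2 N)^\eps$ bound while the top-shell and boundary discrepancies contribute $O(1)$. Your phrasing via the per-shell mean of the Siegel transform is just the unfolded version of the paper's per-$n$ estimate, so the two are the same computation.
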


\begin{proof}
  From \eqref{hit1}--\eqref{hit2}, it follows that 
\[R_N\le  \# \left\{ 
\begin{array}{cc}2^{[\log_2 N]} \leq |n|<N,\\ \text{or } |n|\leq 2^{\log_2^\eps N}
\end{array}:
  (l_1(n,0,\a_1), \dots, l_r(n,0,\a_r))   \in B(n,d,r,c) \right\} .\]
  Note that for a fixed $n\in \Z^d$ and $j\in 1,\dots ,r$, the form $\{l_j(n, 0, \a_j)\}$
  is uniformly distributed on the circle.
Hence
$$ \text{Leb}\left\{\ba\in \T^{rd}: (l_1(n,0,\a_1), \dots, l_r(n,0,\a_r))   \in B(n,d,r,c)\right\}\ll \frac{1}{|n|^d}  $$
and so
$$\left\| R_N \right\|_{L^1(\ba)} \ll
\sum_{2^{[\log_2 N]}\le |n|\le N} \frac{1}{|n|^d}+\sum_{|n|\le 2^{\log_2^\eps N}} \frac{1}{|n|^d}
\ll (\log_2 N)^\eps.
$$
\end{proof}

Hence, to prove Theorem \ref{ThCLT-U} we can replace $U_N(\ba,c)$ by
$$ \sum_{t=[(\log_2 N)^\eps]}^{[\log_2 N]} \Phi\circ g^t(\Lambda_\ba \Z^{d+r}). $$ 


\subsection{Changing the measure.} \label{sec.measure}
Note that the action of $g^t$ on the space of lattices $\cM$ is partially hyperbolic and 
its unstable manifolds are orbits of the action $\Lambda_\ba$ with $\ba \in M(d,r)$ ranging in the set of $d\times r$ matrices.
This will allow us to reduce the proof of CLTs to CLTs for the diagonal action
on the space of lattices.
A similar reduction is possible for the $g^t$-action on the space of affine lattices  since in that case  the
unstable manifolds for the action of $g^t$ are given
by $(\Lambda_\ba, (0, \bx))$, with  $\ba \in M(d,r)$, $\bx \in \R^r.$

Let $\eta=1/k^{10d}$ where $k=[\log_2 N]$.
For $i=1,\ldots,r+d-1$ let $t_i$ be independent uniformly distributed
in $[-\eta,\eta]$. Also introduce a random matrix $\bb \in M(r,d)$
where all the entries of $\bb$ are 
independent uniformly distributed in $[-1,1]$. Let 
$$D_{\bt}=
\diag\left(1+t_1,\ldots,1+t_{d+r-1},\left(\textstyle\prod_{l=1}^{r+d-1} (1+t_l)\right)^{-1}\right) $$
and 
$$\bar{\Lambda}_\bb=
 \begin{pmatrix}
  \Id_d & \bb\\ 0 & \Id_r
 \end{pmatrix}
 . $$
Let $\tilde{\Lambda}(\ba, \bb, \bt)=D_\bt \bar{\Lambda}_\bb \Lambda_\ba$. 

It is clear that if $\ba$ is distributed uniformly in a unit cube,
then
$\tilde{\Lambda}(\ba, \bb, \bt)$ is distributed according to a
$(C k^{10d}, 1)$-regular and $C$-centrally smoothable density.
Note that
$$ g^t \tilde{\Lambda}(\ba, \bb, \bt)=D_\bt \bar{\Lambda}_{\bb_t} g^t \Lambda_\ba
\text{ where } |\bb_t|\leq e^{-t}. $$
Observe also that for $h \in \bbG$  and $E\subset \R^{d+r}$, we have
$$ \cS(\one_E)(h \cL)=\cS(\one_{h^{-1}E})(\cL) ,$$
and hence for $t\geq k^{\eps}$ we have 
$$ \left|\cS(\one_{E_c})(g^t \tilde{\Lambda}(\ba, \bb, \bt)\Z^{d+r})-
\cS_{\one_{E_c}}(g^t \Lambda_\ba\Z^{d+r})\right|\leq
\cS_{\one_{\tilde{E}}}(g^t \Lambda_\ba\Z^{d+r}) $$
where $\tilde{E}$ denotes a $C k^{-10 d}$ neighborhood of the boundary of $E_c.$  
Now the same argument as in Lemma \ref{LmRed1} gives

\begin{lemm}
\label{LmRed2}
  For each $\eps>0$, 
  \[\sum_{t=[(\log_2 N)^\eps]}^{[\log_2 N]} \Phi\circ g^t(\Lambda_\ba \Z^{d+r})=
  \sum_{t=[(\log_2 N)^\eps]}^{[\log_2 N]} \Phi\circ g^t(\tilde{\Lambda}(\ba, \bb,\bt) \Z^{d+r})
  +\tilde{R}_N\] 
with 
${\big\| \tilde{R}_N \big\|_{L^1(\ba, \bb, \bt)}} \ll 1 .$
  \end{lemm}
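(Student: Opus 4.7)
The setup preceding the lemma already supplies the pointwise upper bound needed. Combining the identity $g^t\tilde\Lambda(\ba,\bb,\bt)=D_\bt\bar\Lambda_{\bb_t}g^t\Lambda_\ba$ with $|\bb_t|\le e^{-t}$ and with $\|D_\bt-\text{Id}\|=O(\eta)=O(k^{-10d})$, one sees that for $t\ge k^\eps$ the set $(D_\bt\bar\Lambda_{\bb_t})^{-1}E_c$ differs from $E_c$ only inside the $Ck^{-10d}$-neighborhood $\tilde E$ of $\partial E_c$, uniformly in $(\bb,\bt)$, because $e^{-t}\le e^{-k^\eps}\ll k^{-10d}$ is negligible against the $D_\bt$-perturbation. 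This immediately gives
\[|\tilde R_N|\le\sum_{t=[(\log_2 N)^\eps]}^{[\log_2 N]}\cS(\one_{\tilde E})(g^t\Lambda_\ba\Z^{d+r}),\]
so the task reduces to proving $\E_\ba[\cS(\one_{\tilde E})(g^t\Lambda_\ba\Z^{d+r})]=O(k^{-10d})$ uniformly in the relevant range of $t$.

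The plan for this uniform bound is to expand the Siegel transform and integrate using Fubini, mirroring the computation in Lemma \ref{LmRed1}. The $n=0$ term contributes nothing since $\tilde E$ is bounded away from $\{x=0\}$ for $k$ large. For $n\in\Z^d\setminus\{0\}$, the key observation is that the map $\ba\mapsto\ba n$ pushes the uniform measure on $[0,1]^{dr}$ forward to the uniform measure on $\T^r$, so the joint sum over $m\in\Z^r$ and integral over $\ba$ unfolds to
\[\sum_{m\in\Z^r}\int_{[0,1]^{dr}}\one_{\tilde E}\bigl(2^{-t}n,2^{td/r}(m+\ba n)\bigr)\,d\ba = 2^{-td}\int_{\R^r}\one_{\tilde E}(2^{-t}n,w)\,dw.\]
Summing on $n\ne 0$ is then a Riemann sum on scale $2^{-t}$ for $\text{Vol}(\tilde E)$. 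I would split $\tilde E$ into the two $Ck^{-10d}$-shells around $|x|\in\{1,2\}$, where the $w$-slices have measure $O(c^r)$ but only $O(2^{td}k^{-10d})$ integer vectors $n$ contribute, and the bulk region $|x|\in[1,2]$, where the $w$-slices (coming from the perturbed $y_j$-faces) have measure $O(c^{r-1}k^{-10d})$ while $O(2^{td})$ vectors $n$ contribute; both pieces yield $O(k^{-10d})$, and the Riemann-sum error $O(2^{-t})$ is negligible for $t\ge k^\eps$.

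Summing this uniform bound over the at most $[\log_2 N]+1$ values of $t$ finally produces
\[\|\tilde R_N\|_{L^1(\ba,\bb,\bt)}\ll k\cdot k^{-10d}=k^{1-10d}\ll 1,\]
as claimed, since $d\ge 1$. The main obstacle is the slice-by-slice geometric accounting on $\tilde E$ in the second step: a crude bound that treats $\tilde E$ as a generic subset of $\supp(\one_{E_c})$ would only give $\text{Vol}(\tilde E)=O(1)$ and fail by a factor of $k^{10d}$, so one really must exploit that $\tilde E$ is a thin neighborhood of $\partial E_c$ and that its $x$-projection is itself thin except in the two outer shells near $|x|\in\{1,2\}$.
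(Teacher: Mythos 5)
Your proposal is correct and follows the paper's route exactly: the paper reduces the difference to $\cS(\one_{\tilde E})(g^t\Lambda_\ba\Z^{d+r})$ via the identity $g^t\tilde\Lambda(\ba,\bb,\bt)=D_\bt\bar\Lambda_{\bb_t}g^t\Lambda_\ba$ and then invokes ``the same argument as in Lemma \ref{LmRed1}'', i.e.\ the equidistribution of $\ba n \bmod 1$ and Fubini, which is precisely your unfolding computation. Your elaboration of the volume accounting --- that $\mathrm{Vol}(\tilde E)=O(k^{-10d})$ because $\tilde E$ is a thin shell around $\partial E_c$, giving $O(k\cdot k^{-10d})\ll 1$ after summing over $t$ --- correctly supplies the details the paper leaves implicit.
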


Now Theorem \ref{ThCLT-U} follows from Theorem \ref{ThCLT-U-Lat} except for the formula 
for $\sigma$ which is derived in Section \ref{sec.variances}.

\subsection{Inhomogeneous case}
\label{SSRed-V}

The reduction of Theorems \ref{ThCLT-V} and \ref{ThCLT-V-as} to
Theorems \ref{ThCLT-V-Lat} and \ref{ThCLT-V-Lat-as}
requires only small changes compared to the preceding section. 
To wit, Lemmas \ref{LmRed1} and \ref{LmRed2} take the following form.

\begin{lemm}
\label{LmRedV}
(a) For each $\eps>0$,
\[V_N(\ba,\bx, c) = \sum_{t=[(\log_2 N)^\eps]}^{[\log_2 N]} \tcS(\one_{E_c})(g^t(\Lambda_\ba \Z^{d+r}+(0, \bx)))
+R_N\] 
where 
${\left\| R_N \right\|_{L^1(\ba)}} \ll (\log_2 N)^\eps.$ 

(b) Let $\bb$ and $\bt$ have the same distribution as in Subsection \ref{sec.measure} and $\by$ be uniformly
distributed in $[-1, 1]^d$
then for each $\eps>0$
\begin{multline}\sum_{t=[(\log_2 N)^\eps]}^{[\log_2 N]} \tcS(\one_{E_c})(g^t(\Lambda_\ba \Z^{d+r}+(0, \bx)))
\\
=\sum_{t=[(\log_2 N)^\eps]}^{[\log_2 N]} \tcS(\one_{E_c})(g^t(\tLambda(\ba, \bb, \bt) \Z^{d+r}+(\by, \bx)))
   +\tilde{R}_N \end{multline}
with 
${\big\| \tilde{R}_N \big\|_{L^1(\ba, \bb, \bt, \by)}} \ll 1 .$
\end{lemm}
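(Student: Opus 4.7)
The plan is to adapt verbatim the arguments of Lemmas \ref{LmRed1} and \ref{LmRed2} to the affine setting, replacing the lattice $\Lambda_\ba \Z^{d+r}$ by the affine lattice $\Lambda_\ba \Z^{d+r} + (0,\bx)$ and, in (b), tracking the extra randomization by $\by$. The underlying observation is the affine Dani correspondence: for $|n|\in[2^t, 2^{t+1}]$, an integer $n$ contributes to $V_N(\ba,\bx,c)$ iff there is a unique $m\in\Z^r$ with
\[
g^t\bigl(n,\,\ba n + m + \bx\bigr)\in E_c,
\]
equivalently iff the affine lattice $g^t(\Lambda_\ba \Z^{d+r} + (0,\bx))$ has a point in the fixed box $E_c$.

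For part (a), summing this correspondence over $t\in \bigl[[(\log_2 N)^\eps],\,[\log_2 N]\bigr]$ recovers $V_N$ up to contributions from $n$ with $|n|\leq 2^{(\log_2 N)^\eps}$ or $2^{[\log_2 N]}\leq |n|<N$. For any fixed nonzero $n$ and any fixed $\bx$, the map $\ba\mapsto \ba n+\bx \pmod 1$ is uniformly distributed on $\T^r$, hence
\[
\mathrm{Leb}_\ba\bigl\{\ba : (l_{x^j,\a^j}(n))_j \in B(n,d,r,c)\bigr\}=c^r |n|^{-d}.
\]
Summing $|n|^{-d}$ over the two excluded ranges (giving a logarithm of length $O((\log_2 N)^\eps)$ in the first case and $O(1)$ in the second) yields $\|R_N\|_{L^1(\ba)}\ll (\log_2 N)^\eps$, exactly as in Lemma \ref{LmRed1}.

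For part (b), I use the conjugation identity already established in Section \ref{sec.measure},
\[
g^t\,\tLambda(\ba,\bb,\bt) = D_\bt\,\bar{\Lambda}_{\bb_t}\, g^t\,\Lambda_\ba,\qquad |\bb_t|\leq e^{-t},
\]
so that
\[
g^t\bigl(\tLambda\Z^{d+r} + (\by,\bx)\bigr) = D_\bt\,\bar{\Lambda}_{\bb_t}\, g^t\Lambda_\ba\Z^{d+r} + \bigl(2^{-t}\by,\,2^{td/r}\bx\bigr),
\]
whereas $g^t(\Lambda_\ba\Z^{d+r}+(0,\bx)) = g^t\Lambda_\ba\Z^{d+r} + (0, 2^{td/r}\bx)$. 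For $t\geq (\log_2 N)^\eps$ the two affine lattices differ by a transformation of $\tcM$ that is $O(k^{-10d})$-close to the identity, where $k=[\log_2 N]$; consequently $\tcS(\one_{E_c})$ agrees on them except when the affine lattice visits a $O(k^{-10d})$-neighborhood $\tilde E$ of $\partial E_c$. This gives the pointwise bound
\[
|\tilde R_N|\leq \sum_{t=[(\log_2 N)^\eps]}^{[\log_2 N]} \tcS(\one_{\tilde E})\bigl(g^t(\tLambda\Z^{d+r}+(\by,\bx))\bigr).
\]
Averaging in $L^1(\ba,\bb,\bt,\by)$ and applying the Siegel-type mean formula for affine lattices (Proposition \ref{LmRogers}) bounds each term by $\mathrm{vol}(\tilde E)\ll k^{-10d}$, so the total is $\ll k\cdot k^{-10d}=o(1)$.

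The only delicate point is to check that the $L^1$ estimate in (b) holds uniformly in the fixed parameter $\bx$; since $\bx$ enters only as a rigid translation of an integrand that is integrated by a translation-invariant mean formula on $\tcM$ against a fixed target set $E_c$, this follows exactly as in Lemma \ref{LmRed2} and requires no new ingredient.
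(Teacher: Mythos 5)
Part (a) is correct and is essentially the paper's own argument (direct counting of the excluded $n$, using that $\ba n + \bx \pmod 1$ is uniform on $\T^r$ for each fixed $n\neq 0$ and each fixed $\bx$).

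Part (b) contains a genuine gap. You claim that
$g^t(\tLambda\Z^{d+r}+(\by,\bx)) = D_\bt\bar\Lambda_{\bb_t}g^t\Lambda_\ba\Z^{d+r} + (2^{-t}\by, 2^{td/r}\bx)$
and $g^t(\Lambda_\ba\Z^{d+r}+(0,\bx)) = g^t\Lambda_\ba\Z^{d+r} + (0, 2^{td/r}\bx)$ ``differ by a transformation of $\tcM$ that is $O(k^{-10d})$-close to the identity,'' because the linear parts differ by $D_\bt\bar\Lambda_{\bb_t}\approx\Id$ and the translation parts differ by $(2^{-t}\by,0)$. But these two perturbations are applied \emph{in the wrong order} to be composed into a small element of $\tbbG$: to pass from $\cL + v$ to $h\cL + v'$ one must apply $(h,\,v' - hv)$, not $(h,\,v'-v)$. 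Here $h=D_\bt\bar\Lambda_{\bb_t}$ and $v=(0,2^{td/r}\bx)$, so
\[
v' - hv = \bigl(2^{-t}\by - D_\bt^{(d)}\bb_t\,2^{td/r}\bx,\ (\Id_r - D_\bt^{(r)})\,2^{td/r}\bx\bigr).
\]
The second block has size $\asymp \eta\, 2^{td/r}|\bx| = k^{-10d}\,2^{td/r}|\bx|$, and for $t$ near $k=[\log_2 N]$ this is $\asymp N^{d/r}(\log N)^{-10d}\to\infty$. In concrete terms: if a point of the first affine lattice lands in $E_c$, the corresponding point of the second has $y$-component shifted by $\approx (\Id_r-D_\bt^{(r)})\,2^{td/r}(\ba n+m)\approx -(\Id_r-D_\bt^{(r)})2^{td/r}\bx$, which is large, so it lands far outside $E_c$; the two counts are controlled by essentially disjoint events in $\ba$. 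The pointwise bound $|\tilde R_N|\leq\sum_t\tcS(\one_{\tilde E})$ with $\tilde E$ a thin neighborhood of $\partial E_c$ therefore fails, and the rest of your estimate collapses. The clean way to avoid this is to randomize via the semidirect product: with $(h',b') := (D_\bt\bar\Lambda_{\bb_t},(2^{-t}\by,0))$ one has $(g^t,0)(D_\bt\bar\Lambda_\bb,(\by,0)) = (h',b')(g^t,0)$, so that $g^t\bigl[(D_\bt\bar\Lambda_\bb,(\by,0))(\Lambda_\ba,(0,\bx))\Z^{d+r}\bigr]=(h',b')\,g^t(\Lambda_\ba\Z^{d+r}+(0,\bx))$ with $(h',b')$ genuinely close to the identity; but this produces the translation part $(\by+D_\bt^{(d)}\bb\bx,\,D_\bt^{(r)}\bx)$ rather than $(\by,\bx)$, a distinction your proof silently elides.

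A secondary issue: even granting the pointwise bound, you invoke Proposition \ref{LmRogers} to conclude $\E\bigl[\tcS(\one_{\tilde E})\bigr]\ll \mathrm{vol}(\tilde E)$. Proposition \ref{LmRogers}(c)--(d) are identities for the Haar measure on $\tcM$, but for fixed $\bx$ the law of $g^t(\tLambda(\ba,\bb,\bt)\Z^{d+r}+(\by,\bx))$ is a singular measure on $\tcM$ (it is supported on a positive-codimension submanifold: the $\bx$-coordinate is fixed). The correct argument here is the direct counting over $n$ and $m$, exactly as in Lemma \ref{LmRed1}, rather than an appeal to Rogers' formula.
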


Note that in part (a) the error has small $L^1(\ba)$ norm for each fixed $\bx.$ This follows 
from the fact that for each $\bx$ and $k$, $\ba k+\bx$ is uniformly distributed on $\T^r.$ 
This is useful in the proof of Theorem \ref{ThCLT-V-as} since we want to have a control 
for each (or at least, most) $\bx.$ We also note that part (b) is only needed for Theorem
\ref{ThCLT-V} since in Theorem \ref{ThCLT-V-Lat-as} we start with initial conditions supported 
on a positive codimension submanifold of $\tcM.$ (One of the steps in the proof of Theorem
\ref{ThCLT-V-Lat-as} consists of fattening the support of the initial measure, see 
Subsection \ref{ScasCLT}, however Lemma \ref{LmRedV}(b) is not needed at the reduction stage).

\section{An abstract Theorem}
\label{ScAbstract}
\subsection{The statement}


In what follows $C,u>0,\theta\in(0,1)$, and $s>2$  are fixed constants. 
Let $\xi_n$ be a sequence of random variables satisfying the following conditions. Write $\hat{\xi_l} = \xi_l - \E(\xi_l)$ for the corresponding centered random variable.  


\begin{enumerate} 


\item[(H1)] Given any $K$, there is a sequences $\xi_n^K$ of random variables 
such that 
\begin{enumerate}
\item[(H1a)] $|\xi_n^K|\leq K$ almost surely;
\item[(H1b)] $\E\left(| \xi_n^K-\xi_n|\right)=O(K^{1-s}),$ 
$\E((\xi_n^K-\xi_n)^2)=O(K^{2-s}).$ 
\end{enumerate}


\item[(H2)] There exists a filtration $\cF_l=\cF_{l, n}$ defined for $0\leq l<n$ 
such that for every $l,k$ there exists a variable $\xi_{l,l+k}^K$ that is   $\cF_{l+k}$-measurable, $|\xi_{l,l+k}^K|\le K$ almost surely, $ \E \xi_{l,l+k}^K = \E \xi_{l}^K$, and 
$$\P( |\xi_{l}^K - \xi_{l,l+k}^K| \geq \theta^k) \leq C K^u (l+1)^u \theta^k.$$

\item[(H3)]  For $l,k$, there exists an event $G_{l,k}$ such that $\P(G_{l,k}^c) \leq C \theta^k$ and 
for $\omega\in G_{k,l}$ 
$$|\E (\hxi_{l+k}^K | \cF_l)(\omega)  |\leq C (l+1)^u K^u  \theta^k.$$
\item[(H4)] 
There exists a numerical sequence $b_{K,k}$ for $k\ge 0$ such that for $\omega\in G_{k,l}$  and $k'\geq k$,
$$\left|\E (\hxi_{l+k}^K\hxi_{l+k'}^K | \cF_l )(\omega) - b_{K, k'-k}\right| \leq 
C(l+1)^u K^u e^{u(k'-k)} \theta^{k}.$$
\end{enumerate}

\begin{theo}
\label{ThCLTabs}
(a) Under conditions (H1)-(H4) if $\omega$ is distributed according to $\P$ then
\begin{equation} \label{clt1} \frac{ \sum_{l=0}^{n-1} \hxi_l  }{\sigma \sqrt{n}} \implies \cN(0,1)  \end{equation}
with 
\begin{equation} \label{eq.sigma}  \sigma^2=\sigma_0+2\sum_{j=1}^\infty \sigma_j \qedhere, \quad \sigma_j=\lim_{K\to\infty} b_{K, j} .\end{equation}

(b) Suppose conditions (H1)--(H4) are satisfied. Fix $\eps > 0$ such that $\frac{1+\eps}{s} + \eps <\frac 1 2$ and set $K_n=n^{\frac{1+\eps}{s}}$. Suppose that $\omega$ is distributed according to a measure
$\P_n$ which has a density $\rho_n=\frac{d\P_n}{d\P}$ satisfying

(D1) $\rho_n\leq C n^u;$

(D2) for each $k$ there is an $\cF_k$-measurable density $\rho_{n,k}$ such that
$$\P(|\rho_n-\rho_{n,k}|\geq \theta^k) \leq  C n^u \theta^k;$$

(D3) For each $n^\eps\leq l\leq n$, 
$$\E_n(|\xi_l - \xi_l^{K_n}|)\leq  C K_n^{1-s}.$$

Then, 
\begin{equation} \label{clt1a} \frac{ \sum_{l=n^\eps}^{n-1} \hxi_l  }{\sigma \sqrt{n}} \implies \cN(0,1).  \end{equation}
\end{theo}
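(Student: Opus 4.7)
I would proceed by truncation followed by a Bernstein big/small block argument.  Fix $\eps>0$ small enough that $(1+\eps)(s-1)/s > 1/2$ (possible since $s>2$) and set $K = K_n = n^{(1+\eps)/s}$.  By (H1b),
\begin{equation*}
\E\Bigl|\sum_{l=0}^{n-1}(\xi_l-\xi_l^K)\Bigr| \leq Cn K^{1-s} = O\bigl(n^{1-(1+\eps)(s-1)/s}\bigr) = o(\sqrt{n}),
\end{equation*}
so after centering I may replace $\xi_l$ by $\xi_l^K$ throughout and work with bounded summands.  A direct second-moment computation, using (H4) on the good events $G_{l,k}$ and using (H3) together with (H1a) to discard the complementary events, yields $\frac{1}{n}\var(\sum_{l<n}\hxi_l^K)\to b_{K,0} + 2\sum_{j\geq 1} b_{K,j}$ for each fixed $K$ (the factor $e^{u(k'-k)}$ in (H4) is harmless once one truncates the summation in $k'$ at order $\log n$); letting $K\to\infty$ gives the variance $\sigma^2$ of \eqref{eq.sigma}.

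\textbf{Blocking step.} Partition $\{0,\dots,n-1\}$ into alternating big blocks $I_j$ of length $p_n$ and small blocks of length $q_n$, with $q_n = A\log n$ for a large constant $A$ and $p_n = n^\beta$ for some $\beta\in(0,1)$.  Cauchy--Schwarz and the variance bound show that the small-block contribution is $O(\sqrt{nq_n/p_n})=o(\sqrt n)$.  For each big block $I_j$, define the surrogate
\begin{equation*}
\tilde B_j = \sum_{l\in I_j}\bigl(\xi^K_{l,\max I_j + q_n/2}-\E\xi_l^K\bigr),
\end{equation*}
which is $\cF_{\max I_j+q_n/2}$-measurable.  Combining (H2) with the pointwise bound $|\xi^K_l|\le K$, the total $L^1$ discrepancy between $B_j=\sum_{l\in I_j}\hxi_l^K$ and $\tilde B_j$, summed over blocks, is negligible when $A$ is large.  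Next, (H3) forces $|\E(\tilde B_{j+1}\mid\sigma(\tilde B_1,\dots,\tilde B_j))|$ to be super-polynomially small, so a standard telescoping coupling produces a genuinely independent sequence approximating $(\tilde B_j)$ in $L^1$; Lindeberg's CLT for this bounded triangular array, together with the variance asymptotics above, yields \eqref{clt1}.

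\textbf{Plan for part (b) and main obstacle.} The key observation for (b) is that the sum starts at time $n^\eps$, so any initial condition carried by $\rho_n$ has been mixed away by then.  Using (D2) with $k=n^\eps$, replace $\rho_n$ by the $\cF_{n^\eps}$-measurable density $\rho_{n,n^\eps}$ and truncate it at height $n^{2u}$ using (D1); the total-variation cost of both steps is $o(1)$.  Then with $S_n=\sum_{l=n^\eps}^{n-1}\hxi_l$,
\begin{equation*}
\E_{\P_n}\!\bigl[e^{itS_n/(\sigma\sqrt n)}\bigr] = \E_\P\!\bigl[\rho_{n,n^\eps}\,\E_\P(e^{itS_n/(\sigma\sqrt n)}\mid\cF_{n^\eps})\bigr]+o(1),
\end{equation*}
and the inner conditional expectation tends to $e^{-t^2/2}$ in $L^1(\P)$ by running the proof of part (a) conditionally on $\cF_{n^\eps}$: conditions (H3) and (H4) are stable under conditioning on a sub-$\sigma$-algebra lying below the summation range, and (D3) replaces (H1b) in the truncation step.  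Since $\E_\P[\rho_{n,n^\eps}]=1+o(1)$, dominated convergence (using the $n^{2u}$ truncation) gives \eqref{clt1a}.  The principal difficulty throughout is parameter bookkeeping in the blocking step: the prefactors $(l+1)^uK^u$ in (H2)--(H4), after truncation at $K=K_n$ and accumulation over $\Theta(n/p_n)$ blocks, produce errors that grow polynomially in $n$, and these must be absorbed by $\theta^{q_n}$ while simultaneously the small-block error $\sqrt{nq_n/p_n}$, the truncation error $nK_n^{1-s}$, and the coupling error remain $o(\sqrt n)$.  The constraints are compatible but require delicate simultaneous tuning of $p_n,q_n,K_n$.
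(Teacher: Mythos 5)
Your overall plan — truncate at $K_n$, split into big blocks separated by gaps, and establish a CLT for the block sums — is the same architecture as the paper, and the variance computation is in the same spirit.  However, there is a genuine gap in the step that turns approximate block-independence into a CLT.  You assert that because (H3) makes $\E(\tilde B_{j+1}\mid\sigma(\tilde B_1,\dots,\tilde B_j))$ super-polynomially small, ``a standard telescoping coupling produces a genuinely independent sequence approximating $(\tilde B_j)$ in $L^1$.''  This inference is not valid: (H3) controls only the conditional \emph{mean}, whereas building an $L^1$-close coupling with an independent sequence requires control on the conditional \emph{law} (e.g.\ a $\phi$-mixing or total-variation condition), which is strictly stronger and is not among (H1)--(H4).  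In fact (H4) must enter the CLT step, and your sketch never really uses it there.  The paper's resolution (Lemma 4.2) is to Taylor-expand the conditional characteristic function $\E\bigl(e^{i\lambda Z_{m+1}/\sqrt n}\mid\cF_{\brl_{m+1}}\bigr)$ to third order on a good event $\bG_m$: (H3) bounds the first conditional moment, (H4) bounds the second (with a two-regime analysis in $k'-k$ to tame the factor $e^{u(k'-k)}$, exactly the issue you only gesture at with ``truncate the summation in $k'$''), and the uniform bound $|Z_{m+1}|\le Kn^\eps=o(\sqrt n)$ absorbs the cubic remainder; then the recurrence on characteristic functions is iterated.  Your route could be repaired by replacing $\tilde B_{j+1}$ by the martingale increment $\tilde B_{j+1}-\E(\tilde B_{j+1}\mid\cF_{\tau_j})$ and invoking a martingale CLT, but then you must prove that the conditional variances $\E(\tilde B_{j+1}^2\mid\cF_{\tau_j})$ stabilize, which again needs (H4) and the same two-regime argument; without that the claim that ``the variance asymptotics above'' (which are unconditional) suffice is not justified.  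Two further remarks: the paper uses gaps of length $n^{\eps^2}$ rather than $A\log n$; with $q_n=A\log n$ the bad-event probabilities $\theta^{q_n}$ are only polynomially small, so $A$ must be tuned against the constant $u$ (workable, but more delicate than necessary).  For part (b) your outline is essentially correct, and noticing that (D3) replaces (H1b) is the right observation, but the phrase ``conditions (H3) and (H4) are stable under conditioning on $\cF_{n^\eps}$'' hides nontrivial content: the paper proves a comparison lemma (Lemma 4.3) showing that $\E_n(\cdot\mid\cF_l)$ and $\E(\cdot\mid\cF_l)$ agree up to an exponentially small error off an exponentially small set, and this uses both (D1) and (D2), not just a replacement of $\rho_n$ by $\rho_{n,n^\eps}$.
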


\subsection{Limiting variance.}
Here we show that the normalized variance converges. 

\begin{lemm} Under conditions (H1)--(H4) we have that 
\begin{equation}
\label{GreenKubo}
\lim_{n \to \infty} \frac{\E(( \sum_{l=0}^{n-1} \hxi_l )^2)}{n} = \sigma^2
\end{equation}
with $\sigma$ as in \eqref{eq.sigma}. 
\end{lemm}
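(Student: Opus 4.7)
\smallskip
\noindent\textbf{Proof plan.} Write $S_n = \sum_{l=0}^{n-1}\hat\xi_l$ and expand the second moment by isotropy of the sum:
\begin{equation*}
\frac{\E(S_n^2)}{n} = \frac{1}{n}\sum_{l=0}^{n-1}\E(\hat\xi_l^2) + \frac{2}{n}\sum_{j=1}^{n-1}\sum_{l=0}^{n-1-j}\E(\hat\xi_l\hat\xi_{l+j}).
\end{equation*}
The strategy is to show that, for each fixed gap $j$, the average covariance over $l$ converges to $\sigma_j$ and that the $\sigma_j$ are summable, so that a dominated-convergence argument yields $\E(S_n^2)/n \to \sigma^2$.

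\smallskip
\noindent\emph{Truncation.} For a parameter $K$ growing slowly with $n$, pass from $\hat\xi_l$ to $\hat\xi_l^K$. By (H1b), $\|\hat\xi_l - \hat\xi_l^K\|_2 = O(K^{1-s/2})$, and by Cauchy--Schwarz
\begin{equation*}
|\E(\hat\xi_l\hat\xi_{l+j}) - \E(\hat\xi_l^K\hat\xi_{l+j}^K)| = O(K^{1-s/2}),
\end{equation*}
using that $\|\hat\xi_l\|_2$ is bounded uniformly in $l$ (obtained by fixing $K$ once). The contribution of the truncation to $\E(S_n^2)/n$ is then controlled, and for $s>2$ one can choose $K=K(n)\to\infty$ slowly enough that it vanishes.

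\smallskip
\noindent\emph{Pointwise convergence of the covariances.} Fix $K$ and $j$, and consider the pair $(m,m+j)$. Writing $m=l+k$, $l=m-k$, $k'=k+j$ in (H4), on the event $G_{k,l}$ (which has probability $\geq 1-C\theta^k$) we get
\begin{equation*}
|\E(\hat\xi_m^K\hat\xi_{m+j}^K\mid\cF_{m-k}) - b_{K,j}| \leq C(m-k+1)^u K^u e^{uj}\theta^k.
\end{equation*}
Choosing $k=k(m)\sim (2u/\log(1/\theta))\log m$ makes the RHS $o(1)$, and integrating over $G_{k,l}$ and its complement (on which the product is bounded by $4K^2$) yields $\E(\hat\xi_m^K\hat\xi_{m+j}^K)\to b_{K,j}$ as $m\to\infty$.

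\smallskip
\noindent\emph{Summability and conclusion.} To bound $|b_{K,j}|$ uniformly in a summable way, approximate $\hat\xi_l^K$ by the $\cF_{l+\lfloor j/2\rfloor}$-measurable variable $\hat\xi_{l,l+\lfloor j/2\rfloor}^K$ furnished by (H2), then use (H3) on the inner conditional expectation:
\begin{equation*}
\E(\hat\xi_l^K\hat\xi_{l+j}^K) \approx \E\bigl(\hat\xi_{l,l+\lfloor j/2\rfloor}^K\,\E(\hat\xi_{l+j}^K\mid \cF_{l+\lfloor j/2\rfloor})\bigr) = O\bigl((l+j)^u K^{u+1}\theta^{j/2}\bigr).
\end{equation*}
After letting $l$ grow with $n$ but slowly and passing to the limit via the previous step, this produces the bound $|b_{K,j}|\leq C_K \theta^{j/2}$, which is summable and thus justifies $\sigma_j=\lim_K b_{K,j}$ and the passage of limits. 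Combining Cesàro averaging in $l$ (for each fixed $j$), summability in $j$, and the vanishing truncation error yields \eqref{GreenKubo}.

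\smallskip
\noindent\emph{Main obstacle.} The delicate point is the joint control of the truncation level $K$, the burn-in $k(m)$, and the summation bounds: the polynomial prefactors $(l+1)^u$ appearing in both (H3) and (H4) must be absorbed by the exponential factor $\theta^k$, and this forces a careful choice of $k(m)$ and of $K=K(n)$ so that the two limits $K\to\infty$ and $n\to\infty$ can be interchanged. Establishing the uniform geometric bound on $|b_{K,j}|$ in $j$ (via Step 4) is the key estimate that makes the dominated convergence argument legitimate.
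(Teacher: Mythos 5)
Your proposal is correct and follows essentially the same route as the paper's proof: truncate via (H1), identify $b_{K,j}$ as the large-$m$ limit of $\E(\hxi^K_m\hxi^K_{m+j})$ using (H3)--(H4), get geometric decay of the covariances from the (H2)$+$(H3) decomposition into $I+I\!\!I$, and then sum. The one imprecise point is your summability step: to bound $|b_{K,j}|$ geometrically you must take $l$ comparable to $j$ (rather than ``letting $l$ grow and passing to the limit,'' where the prefactor $(l+j)^u$ destroys the estimate), which is precisely the coupling you identify as the main obstacle and is how the paper proceeds (its condition \eqref{mjk} together with the choices $m=2^{\teps j}$, $K=2m$).
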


\begin{proof}
First we record a property of cross-terms in the sum that lets us pass from $\xi_n$ to the truncated sequence. We have
 \begin{align}
\label{DifVar} D:= \E(\hxi_{m+j} \hxi_m)-\E(\hxi_{m+j}^K \hxi_m^K)=O(K^{2-s}). 
\end{align}
Indeed, we have
\begin{align*}
 D & = 
 \E(\xi_{m+j} \xi_m)-\E(\xi_{m+j}^K \xi_m^K)\\
 &\phantom{=}-\E(\xi_{m+j})\E( \xi_m)+\E(\xi_{m+j}^K)\E( \xi_m^K)\\
 &= \E(\xi^K_{m+j}(\xi_m-\xi_m^K)) + \E(\xi_m^K(\xi_{m+j} - \xi_{m+j}^K)) + \E((\xi_m-\xi_m^K)(\xi_{m+j} - \xi_{m+j}^K))\\
 &\phantom{=}-\E(\xi^K_{m+j})\E(\xi_m-\xi_m^K) - \E(\xi_m^K)\E(\xi_{m+j} - \xi_{m+j}^K) - \E(\xi_m-\xi_m^K)\E(\xi_{m+j} - \xi_{m+j}^K).
\end{align*}
Now applying the Cauchy-Schwarz inequality followed by (H1a) and (H1b), we arrive at the bound \eqref{DifVar} since $s>2$. 



Next, applying (H3) and (H4) with $l=0$  
gives
\begin{equation}
\label{B-Cor}
\E(\hxi_{m+j}^K \hxi_m^K)=b_{K,j}+O(K^{u} 2^{uj} \theta^m) + O(K^2 \theta^m).
\end{equation}
Combing \eqref{DifVar} and \eqref{B-Cor} we get 
\begin{equation}
\label{B-Cor2}
 b_{K,j}=\E(\hxi_{m+j}\hxi_m)+O(K^{2-s})+O(K^{u} 2^{uj} \theta^m)+O(K^2 \theta^m).  
\end{equation}
Take a small number $\teps>0$ and assume that $j\leq \teps K,$  $K<\tK<2K.$  
Taking $m=K/2$ we see that
$$  b_{K,j}-b_{\tK, j}=O(K^{2-s}). $$
Therefore for each $j$, the following limits exist,
$$\sigma_j:=\lim_{K\to\infty} b_{K, j}.$$
and moreover
\begin{equation}
\label{B-Sigma}
 b_{K,j}=\sigma_j+O(K^{2-s}). 
 \end{equation}
Next we claim that under 
 the condition
\begin{equation}
\label{mjk}
m+j<K^{s/(1+\eps)},
\end{equation}
there exists $\tilde u >0 $ such that
\begin{equation}
\label{MixK}
\E(\hxi_{m+j}^K \hxi_m^K)=O(K^\tu \theta^{j/2}).  
\end{equation}
Indeed,
$$ \E(\hxi_{m+j}^K \hxi_m^K)=\E(\hxi_{m+j}^K \hxi_{m,m+ j/2}^K)+\E(\hxi_{m+j}^K (\hxi_{m,m+ j/2}^K-\hxi_{m}^K))=I+I\!\!I.$$
Using (H1a) and (H2), we get
$$ |I\!\!I|\leq K \E(|\hxi_{m,m+ j/2}^K-\hxi_{m}^K|)\leq K\, O\left(\theta^{j/2}+ K^{u+1} (m+1)^u \theta^{j/2}\right) $$
while (H3) shows that
$$ |I|=\left|\E\left(\hxi_{m,m+ j/2}^K \E\left(\hxi_{m+j}^K |\cF_{m+j/2}\right)\right)\right|=
O\left(K^2 \theta^{j/2}\right) + O \left((m+j/2+1)^u K^u \theta^{j/2}\right) $$
proving \eqref{MixK}.

Combining \eqref{DifVar} and \eqref{MixK} we see that for $K$ satisfying \eqref{mjk} we have
$$ \E(\hxi_{m+j} \hxi_m)=O(K^\tu \theta^{j/2})+O(K^{2-s}). $$
Take a small $\teps.$ Suppose first that
$$m\leq e^{\teps j}. $$
Then we can take $K=\theta^{-j/2\tu}$ and \eqref{mjk} holds giving
\begin{equation}
\label{ED}
 \E(\hxi_{m+j} \hxi_m)=O(\htheta^j)
\end{equation} 
for some $\hat\theta\in(0,1)$. In particular combining \eqref{B-Cor2}, \eqref{B-Sigma} and \eqref{ED} with $m=2^{\teps j},$
$K=2m$ we obtain

\begin{equation}
\label{EDSigma}
 \sigma_j=O(\htheta^j). 
  \end{equation}
Also \eqref{B-Cor2} and \eqref{B-Sigma} show that under \eqref{mjk}, 
$$ \E(\hxi_{m+j}\hxi_m)=\sigma_j+O(K^{2-s})+O(K^{u} 2^{uj} \theta^m). $$
Choosing $K=2^{\teps m}$ and assuming $m>\frac{2uj}{\lvert \log_2 \theta\rvert}$, 
\begin{equation}
\label{ExpMix}
 \E(\hxi_{m+j}\hxi_m)=\sigma_j+O(\ttheta^m). 
\end{equation}

To prove the Lemma, we need to control the sum 
$$\frac{\E(( \sum_{l=0}^{n-1} \hxi_l )^2)}{n}=\frac{1}{n}
\left[\sum_{m=0}^{n-1} \E(\hxi_m^2)+2\sum_{m=0}^{n-1} \sum_{0<j\leq n-1-m} \E(\hxi_m \hxi_{m+j})\right]. $$
Using \eqref{ExpMix} for $m>\frac{2u j}{\lvert\log_2 \theta\rvert}$  and using \eqref{ED} otherwise we see that
$$ \E\left( \sum_{m=0}^{n-1} \hxi_m \hxi_{m+j}\right)=n\sigma_j+O(j). $$
Combining this with \eqref{EDSigma} we see that the limit in \eqref{GreenKubo} exists and moreover that
$$ \hskip5cm\sigma^2=\sigma_0+2\sum_{j=1}^\infty \sigma_j. \hskip5cm \qedhere $$
\end{proof}

\subsection{Proof of Theorem \ref{ThCLTabs}}

For all the sequel, we fix $n$  and let $K=K_n=n^{\frac{1+\eps}{s} }$. 
Let $l_m= m [n^\eps]$, $m =0,\ldots,[n^{1-\eps}]:=m_n$.    
Denote 
\begin{align}Z_m & = \sum_{l=l_m+n^{\eps^2}}^{l_{m+1}-1} \hxi_l^K, 
&
\tZ_m & = \sum_{l=l_m+n^{\eps^2}}^{l_{m+1}-1} \hxi_l, 
&
\ttZ_m & = \sum_{l=l_{m}}^{l_m+n^{\eps^2}-1} \hxi_l  ,
&
\check{Z} & =\sum_{l=l_{m_n}}^{n-1}   \hxi_l ,
\end{align}
so that 
$$\sum_{l=0}^{n-1}\hxi_l =\sum_{m=0}^{m_n-1} \ttZ_m+\sum_{m=0}^{m_n-1} \tZ_m+\check{Z}. $$
  We claim that 
\begin{equation} \label{smallblocks} \frac{\sum_{m=0}^{m_n-1} \ttZ_m+\check{Z}}{\sqrt{n}}  \quad \text{and}     \quad \frac{\sum_{m=0}^{m_n-1} (Z_m-\tZ_m)}{\sqrt{n}} \end{equation}
converge to $0$ in probability; it will therefore suffice to prove that
 \begin{equation} \label{clt2} \frac{ \sum_{m=0}^{m_n} Z_m  }{\sigma \sqrt{n}} \implies \cN(0,1). \end{equation}
To prove the claim, observe first that following the proof of \eqref{GreenKubo}, we have 
\begin{equation}
\label{SmallBlocksA}
\E \left(    \left(    \sum_{m=0}^{m_n} \ttZ_m  +\check{Z} \right)^2 \right) = O(n^{1-\eps+\eps^2}) =o(n). 
\end{equation}
As for the second sum in \eqref{smallblocks}, we note that 
$$ Z_m-\tZ_m=\sum_{l=l_m+n^{\eps^2}}^{l_{m+1}-1} \left[\left(\xi_l^K-\xi_l\right)-\left(
\E(\xi_l^K)-\E(\xi_l)\right)
\right] $$
hence (H1b) gives
$$ \E|Z_m-\tZ_m|\leq
2 \sum_{l=l_m+n^{\eps^2}}^{l_{m+1}-1} \E(|\xi_l^K-\xi_l|)\leq C n^\eps K^{1-s}. $$
Therefore
\begin{equation}  \sum_m \frac{\E|Z_m-\tZ_m|}{\sqrt{n}}=O\left(K^{1-s} \sqrt{n}\right)
=O\left(n^{\frac{1+\eps}{s}-\frac{1}{2}}\right)=O\left(n^{-\eps}\right)\to 0. \label{truncated} \end{equation}



We turn now to the proof of \eqref{clt2}.

We start by defining an exceptional set $\bG_m^c$ on which we will not be able to exploit the almost independence of $Z_{m+1}$ from $(Z_1,\ldots,Z_m)$.   
The exact reasons for the definition of each condition on $\bG_m$ will appear during the proof. 

Let $\brl_{m+1}=l_{m+1}+n^{\eps^2/2}.$ 
To be able to use (H2)--(H4) we let for $m\leq m_n$
$$G_m^{(1)}=\bigcap_{k=n^{\eps^2}}^{n^\eps} \left(G_{l_{m+1},k}\cap G_{\brl_{m+1}, k}\right). $$
Next define for $l\geq l_{m+1}+n^{\eps^2}$
$$ \tG_l=\left\{\omega: \E\left(\left|\xi_l^K-\xi_{l,l+n^{\eps^2/2}}^K\right|\; \big|\cF_{\brl_{m+1}}\right)(\omega)
\leq \theta^{n^{\eps^2/10}}\right\} $$
and $$G_m^{(2)}=\bigcap_{k=n^{\eps^2}}^{n^\eps} \tG_{l_{m+1}+k}.$$
For $k,k' \in [n^{\eps^2},n^{\eps}]$ with     $ k'-k \geq n^{\eps^2/2} $ define 
 $$E_{m,k,k'}=\left\{\omega' : \left|\E( \hxi_{l_{m+1}+k'}^K |\cF_{l_{m+1}+k+n^{\eps^2/2}})(\omega') \right|\geq \theta^{n^{\eps^2/10}} \right\}$$
 and let $$\bar{G}_{m,k,k'} = \left\{ \omega : \E \left( \one_{E_{m,k,k'}}(\omega') | \cF_{\brl_{m+1}}   \right) (\omega) \leq  \theta^{n^{\eps^2 /10}} \right\}$$
and 
$$G_m^{(3)}=\bigcap_{k,k'     \in [n^{\eps^2},n^{\eps}] :  k'-k\geq n^{\eps^2/2} }  {\bar G}_{m,k,k'}.$$
Finally set 
$$\bG_m=G_m^{(1)} \cap G_m^{(2)} \cap G_m^{(3)}. $$
Observe that (H2)--(H4) show that
\begin{equation}
\label{GmCompl}
\P(\bG_m^c) \leq C \theta^{n^{\eps^2/100}}.
\end{equation}
The main step in the proof of the CLT is the following 
\begin{lemm} \label{lemma.clt3}
  For $\omega\in \bG_m$,
\begin{equation}
\label{clt3} 
\ln \E\left(e^{i \lambda \frac{Z_{m+1}}{\sqrt{n}} } | \cF_{l_{m+1}}\right)(\omega)  = -\frac{n^\eps}{n} \lambda^2 \sigma^2 (1+o(1)) \end{equation}
where $o(1)$ is uniform in $m=1,\ldots,m_n$. 
\end{lemm}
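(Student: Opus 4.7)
The plan is to establish the conditional CLT for $Z_{m+1}/\sqrt n$ via a Bernstein blocking plus characteristic-function argument exploiting the weak dependence encoded in (H2)--(H4). Write $K=K_n=n^{(1+\eps)/s}$ and partition the index range $[l_{m+1}+n^{\eps^2},\,l_{m+2}-1]$ into alternating \emph{big} blocks $B_1,B_2,\dots$ and \emph{small} buffer blocks $b_1,b_2,\dots$, each of length of order $n^{\eps^2/2}$; set $Y_j=\sum_{l\in B_j}\hxi_l^K$.

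First, I would discard the buffers. By (H1a), $|\hxi_l^K|\leq 2K$, so the total buffer contribution to $\lambda Z_{m+1}/\sqrt n$ is bounded by $C|\lambda|K n^{\eps-1/2}=o(1)$ under the hypothesis $\tfrac{1+\eps}{s}+\eps<\tfrac12$ of Theorem~\ref{ThCLTabs}(b). This reduces the problem to computing $\E\bigl(\prod_j e^{i\lambda Y_j/\sqrt n}\,\big|\,\cF_{l_{m+1}}\bigr)(\omega)$. Next, iteratively peel off big blocks: conditioning successively on $\cF_{l_{m+1}}$ and the preceding blocks $B_1,\dots,B_{j-1}$, use (H2) together with the event $G_m^{(2)}$ to replace each $\xi_l^K$ for $l\in B_j$ by its $\cF$-measurable approximation just before $B_j$, at cost $\theta^{n^{\eps^2/10}}$; then (H3), combined with $G_m^{(1)}$ and $G_m^{(3)}$, replaces the resulting inner conditional expectation of $e^{i\lambda Y_j/\sqrt n}$ by its unconditional mean at the same cost. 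The buffer $b_{j-1}$ of length $n^{\eps^2/2}$ provides exactly the temporal gap required by (H2)--(H3), and the $\sim n^{\eps-\eps^2/2}$ accumulated block-errors sum to $o(1)$, so that on $\bG_m$
\[
\E\bigl(e^{i\lambda Z_{m+1}/\sqrt n}\,\big|\,\cF_{l_{m+1}}\bigr)(\omega) \;=\; \prod_j\E\bigl(e^{i\lambda Y_j/\sqrt n}\bigr) \;+\; o(1).
\]

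Since $|Y_j|/\sqrt n\leq 2K n^{\eps^2/2}/\sqrt n=o(1)$, I would then Taylor-expand each factor
\[
\E e^{i\lambda Y_j/\sqrt n} \;=\; 1-\frac{\lambda^2}{2n}\E(Y_j^2)+O\!\left(\frac{|\lambda|^3 K^3 n^{3\eps^2/2}}{n^{3/2}}\right).
\]
By (H4) together with the identification \eqref{B-Sigma} and the exponential decay \eqref{EDSigma} of the $\sigma_j$, one obtains $\E(Y_j^2)=\sum_{k,k'\in B_j}b_{K,|k-k'|}+o(|B_j|)=|B_j|\,\sigma^2(1+o(1))$. Taking logarithms (each factor equals $1+o(1)$) and summing over the $\sim n^{\eps-\eps^2/2}$ big blocks whose total length is $n^\eps(1+o(1))$ produces the right-hand side of \eqref{clt3}.

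The main obstacle is the uniform-in-$m$ bookkeeping of errors across the iterated peeling. The three events $G_m^{(1)}$, $G_m^{(2)}$, $G_m^{(3)}$ defining $\bG_m$ are engineered precisely to make each block replacement rigorous: the auxiliary exponents $\eps^2$, $\eps^2/2$, $\eps^2/10$ must be balanced so that the block-wise error $O(\theta^{n^{\eps^2/10}})$ summed over $\sim n^\eps$ blocks, together with the cumulative Taylor residue $O(K^3 n^{3\eps^2/2-3/2}\cdot n^{\eps-\eps^2/2})$, still vanishes. Keeping all these small parameters consistent while leaving room for the mixing gap and the cubic moment bound is the delicate point of the argument.
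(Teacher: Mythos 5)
Your route is genuinely different from the paper's, and as written it has two gaps. The paper never sub-blocks $Z_{m+1}$: it Taylor-expands $\E(e^{i\lambda Z_{m+1}/\sqrt n}\mid\cF_{\brl_{m+1}})$ once, to second order, controls the cubic remainder by the crude bound $|Z_{m+1}|\le Kn^\eps=o(\sqrt n)$ (which makes it $o(1)$ \emph{times the quadratic term}), kills the first moment by (H3) on $\bG_m$, and reduces everything to the single estimate $\E(Z_{m+1}^2\mid\cF_{\brl_{m+1}})=n^\eps\sigma^2+o(n^\eps)$, proved pairwise via (H4) when $k\gg k'-k$ and via $G_m^{(2)},G_m^{(3)}$ and the approximations $\hxi^K_{l,l+n^{\eps^2/2}}$ otherwise. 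The first gap in your argument is one of precision: the main term $\frac{n^\eps}{n}\lambda^2\sigma^2$ is itself of order $n^{\eps-1}$, so every additive error in the characteristic function must be $o(n^{\eps-1})$, not merely $o(1)$. Your buffer-discarding bound $C|\lambda|Kn^{\eps-1/2}$ is $o(1)$ but is never $o(n^{\eps-1})$ (that would force $(1+\eps)/s<\eps^2/2-1/2<0$), so it swallows the main term; the same objection applies to your cumulative cubic residue $K^3n^{3\eps^2/2-3/2}\cdot n^{\eps-\eps^2/2}$, which is not $o(n^{\eps-1})$ for $s$ near $2$. Moreover, with big blocks and buffers both of length $n^{\eps^2/2}$ the buffers carry a constant fraction of the indices, so dropping them changes the limiting variance by a constant factor; your later claim that the big blocks have total length $n^\eps(1+o(1))$ contradicts your block sizes. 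Discarding buffers can be salvaged only by making them much shorter than the big blocks \emph{and} estimating their contribution through conditional first and second moments (via (H3)--(H4)), not through sup norms.

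The second gap is structural. Your peeling step replaces $\E(e^{i\lambda Y_j/\sqrt n}\mid\cF_{\mathrm{past}})$ by $\E(e^{i\lambda Y_j/\sqrt n})$ ``using (H3) and $G_m^{(1)},G_m^{(3)}$,'' but (H2)--(H4) control only the conditional first and second moments of the individual $\hxi^K_l$; they say nothing about the conditional distribution or characteristic function of a block sum, which is what a genuine factorization requires. To justify the replacement you would have to Taylor-expand each factor inside the conditioning and reduce to the very moment estimates the paper uses --- at which point the sub-blocking buys nothing and you have reconstructed the paper's proof with extra bookkeeping. A corrected version of your scheme (short buffers, moment-based error control, Taylor expansion before peeling) can be made to work, but the proposal as stated does not close.
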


\begin{proof} 
To prove \eqref{clt3} we note that
\begin{align}\E\left(e^{i \lambda  \frac{Z_{m+1}}{\sqrt{n}} } | \cF_{\brl_{m+1}}\right)(\omega) &=  
1+  i \frac{\lambda}{\sqrt{n}}  \E(Z_{m+1} | \cF_{\brl_{m+1}} )(\omega) -
 \frac{\lambda^2}{2{n}}  \E(Z_{m+1}^2 | \cF_{\brl_{m+1}} )(\omega) \nonumber \\
&  \phantom{=} +   O \left(\frac{\lambda^3}{{n}^{\frac 3 2}}  \E(Z_{m+1}^3 | \cF_{\brl_{m+1}} )(\omega) \right)  \nonumber   \\ 
&=   1+  i \frac{\lambda}{\sqrt{n}}  \E(Z_{m+1} | \cF_{\brl_{m+1}} )(\omega) -
 \frac{\lambda^2}{2{n}}  \E(Z_{m+1}^2 | \cF_{\brl_{m+1}} )(\omega)  \nonumber  \\
& \phantom{=}+ o\left( \frac{\lambda^2}{2{n}}  \E(Z_{m+1}^2 | \cF_{\brl_{m+1}} )(\omega) \right) \label{clt.dl}
  \end{align}
where the last step uses that $|Z_{m+1}|\leq Kn^\eps =o(n^{\frac1 2})$.

Next, (H3) implies that on $\bG_m$
$$\E(Z_{m+1} | \cF_{\brl_{m+1}} )(\omega)=o\left(\theta^{0.5 n^{\eps^2}}\right). $$ 
To finish the proof of \eqref{clt3} it suffices to show that for $\omega\in \bG_m$,
\begin{equation}
\label{BlockVar}
|\E(Z_{m+1}^2 | \cF_{\brl_{m+1}} )(\omega) - n ^\eps \sigma^2 |=o(n^\eps).
\end{equation}
Note that
$$\E(Z_{m+1}^2 | \cF_{\brl_{m+1}} )(\omega)=\sum_{k, k'} 
\E(\hxi_{l_{m+1}+k}^K \hxi_{l_{m+1}+k'}^K | \cF_{\brl_{m+1}} )(\omega). $$
Let us estimate the individual terms in this sum. To fix our notation let us suppose that $k'\geq k.$
Let $R$ be a large constant and consider two cases. 

(a) $k>R (k'-k).$ In this case (H4) and \eqref{B-Sigma} give
$$ \E (\hxi_{l_{m+1}+k}^K \hxi_{l_{m+1}+k'}^K | \cF_{\brl_{m+1}} )(\omega)+O\big(\ttheta^k\big)
=b_{K, k'-k}=\sigma_{k'-k}+O\left(K^{2-s}\right)+O\big(\ttheta^k\big). $$ 

(b) $k\leq R(k'-k)$ and hence $k'-k>\frac{n^{\eps^2}}{R}.$ Then
\begin{align}
 \E (\hxi_{l_{m+1}+k}^K \hxi_{l_{m+1}+k'}^K | \cF_{\brl_{m+1}} )(\omega)
 &=
 \E (\hxi_{l_{m+1}+k, l_{m+1}+k+n^{\eps^2/2} }^K \hxi_{l_{m+1}+k'}^K | \cF_{\brl_{m+1}} )(\omega) \\
 &\phantom{=}
 +
\E \left(\left(\hxi_{l_{m+1}+k}^K-\hxi_{l_{m+1}+k,  l_{m+1}+k+n^{\eps^2/2} }^K\right) \hxi_{l_{m+1}+k'}^K | \cF_{\brl_{m+1}} \right)(\omega)\\
&
=I+I\!\!I 
\end{align}
The second term is
$ O\left(\theta^{n^{\eps^2/10}} K\right)$ since $\omega\in\tG_{l_{m+1}+k}.$
For the first term use that $\omega\in \bar{G}_{m,k, k'}$ to obtain

\begin{align*} \left|I\right|
&=
\left|\E (\hxi_{l_{m+1}+k,  l_{m+1}+k+ n^{\eps^2/2} }^K \E( \hxi_{l_{m+1}+k'}^K |\cF_{l_{m+1}+k+n^{\eps^2/2}})
| \cF_{\brl_{m+1}} )(\omega)\right|
\\ &
\leq K^2  \E ( \one_{E_{m,k,k'}}(\omega') | \cF_{\brl_{m+1}})(\omega) + K \theta^{n^{\eps^2/10}}\\
&
\leq K^2 \theta^{n^{\eps^2/10}}\end{align*}
so both $I$ and $I\!\!I$ are negligible. Combining the estimates of cases (a) and (b) we obtain \eqref{BlockVar}
completing the proof of \eqref{clt3}. \end{proof}

To finish the proof of part (a) it remains to derive the Central Limit Theorem from \eqref{clt3}. For $j\leq m$
set
$$ \hZ_j= \sum_{l=l_j+n^{\eps^2}}^{l_{j+1}} \hxi_{l, n^{\eps^2/2}}^K. $$
Then 
$$ \E\left(\sum_{j=0}^m \big|\hZ_j-Z_j \big|\right)=O\left(\theta^{n^{\eps^2/10}}\right)$$ 
and so
\begin{align}
\label{Z-hZ1}
\E\left(e^{i\frac{\lambda}{\sqrt{n}} \sum_{j=0}^{m+1} Z_j}\right)-\E\left(e^{i\frac{\lambda}{\sqrt{n}} (\sum_{j=0}^{m} \hZ_j)+i\frac{\lambda}{\sqrt{n}}Z_{m+1}}\right)
&=
O\left(\theta^{n^{\eps^2/10}}\right),
\\
\label{Z-hZ2}
\E\left(e^{i\frac{\lambda}{\sqrt{n}} \sum_{j=0}^{m} Z_j}\right)-\E\left(e^{i\frac{\lambda}{\sqrt{n}} \sum_{j=0}^{m} \hZ_j}\right)
&=
O\left(\theta^{n^{\eps^2/10}}\right).
\end{align}
Therefore,
\begin{align}\E\left(e^{i\frac{\lambda}{\sqrt{n}} \sum_{j=0}^{m+1} Z_j}\right)
&\overset{\eqref{Z-hZ1}}{=}
\E\left(e^{i\frac{\lambda}{\sqrt{n}} \sum_{j=0}^{m} \hZ_j}
\E\left(e^{i\frac{\lambda}{\sqrt{n}} Z_{m+1}}|\cF_{\brl_{m+1}}\right)\right)+O\left(\theta^{n^{\eps^2/10}}\right)
\\
&
\overset{\eqref{GmCompl}}{=}\E\left(e^{i\frac{\lambda}{\sqrt{n}} \sum_{j=0}^{m} \hZ_j}
\one_{\bG_m} \E\left(e^{i\frac{\lambda}{\sqrt{n}} Z_{m+1}}|\cF_{\brl_{m+1}}\right)\right)+O\left(\theta^{n^{\eps^2/100}}\right)
\\
&
\overset{\eqref{clt3}}{=}e^{-\frac{\sigma^2 {\lambda}^2 n^\eps}{2 n}} \E\left(e^{i\frac{\lambda}{\sqrt{n}} \sum_{j=0}^{m} \hZ_j}
\right)+o\left(\frac{n^\eps}{n} \right)
\\&
\overset{\eqref{Z-hZ2}}{=}e^{-\frac{\sigma^2 {\lambda}^2 n^\eps}{2 n}} \E\left(e^{i\frac{\lambda}{\sqrt{n}} \sum_{j=0}^{m} Z_j}
\right)+o\left(\frac{n^\eps}{n} \right).
\end{align}
Iterating this recurrence relation $m_n$ times we obtain
$$ \E\left(e^{i\frac{\lambda}{\sqrt{n}} \sum_{j=0}^{m_n} Z_m}\right)=e^{-\frac{\sigma^2 \lambda^2 }{2 }}+o(1) $$
completing the proof of part (a) of Theorem \ref{ThCLTabs}.

Part (b) can be established by a similar argument and we just briefly describe the necessary changes. 
Let $\E_n$ denote the expectation with respect to $\P_n,$ that  is,
$\E_n(\eta)=\E(\eta\rho_n).$ To extend the proof of the Central Limit Theorem to the setting of part (b) we need
to prove \eqref{smallblocks}  and \eqref{clt3} with $\E_n$ instead of $\E$. 
For \eqref{smallblocks}   we need to prove the analogues of \eqref{SmallBlocksA} and \eqref{truncated}.

We claim the following. First, \eqref{clt3} still holds with $\E_n$ instead of $\E.$ Second,
\begin{equation}
\label{SmallBlocksB}
\E_n \left(    \left(    \sum_{m=1}^{m_n} \ttZ_m  +\check{Z} \right)^2 \right) = O(n^{1-\eps+\eps^2}) =o(n). 
\end{equation}
(Note that in contrast to \eqref{SmallBlocksA} the sum here starts with $m=1,$ not $m=0.$)
Third, 
\begin{equation}
\label{TruncSame}
 \P_n\left(\sum_{l=n^\eps}^{n-1} \xi_l\neq \sum_{l=n^\eps}^{n-1} \xi_l^{K_n}\right)\to 0
\end{equation}
where $K_n=n^{\frac{1+\eps}{s}}.$ To prove \eqref{TruncSame} 
note that
$$ \P_n\left(\sum_{l=n^\eps}^{n-1} \xi_l\neq \sum_{l=n^\eps}^{n-1} \xi_l^{K_n}\right)
\leq n \max_l \P_n(\xi_l\neq \xi_l^{K_n}) $$
so \eqref{TruncSame} follows from (D3).
Observe that once these three points of the claim are established, the rest of the proof  of
part (b) proceeds exactly as in part (a). 
To obtain the the other two points of our claim we will need the following
\begin{lemm} \label{measurability} There exists a set $\bar G_m$ with
$\P(\bar G_m^c) \leq C \theta^{n^{\eps/100}}$
such that for $\omega \in \bar G_m$, for $l\geq n^\eps$ and for $\eta$ a bounded random variable we have that 
$$ |\E_n(\eta|\cF_l)(\omega)- \E(\eta|\cF_l)(\omega)|\leq C\|\eta\|_{\infty} \theta^{n^{\eps/100}}. $$
\end{lemm}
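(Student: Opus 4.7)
The plan is to combine the ratio representation
\begin{equation*}
\E_n(\eta\mid\cF_l) = \frac{\E(\eta\rho_n\mid\cF_l)}{\E(\rho_n\mid\cF_l)}
\end{equation*}
with the $\cF_l$-measurable approximation $\rho_{n,l}$ of $\rho_n$ provided by (D2). Since $\rho_{n,l}$ is $\cF_l$-measurable, $\E\bigl[(\eta - \E(\eta\mid\cF_l))\rho_{n,l}\,\big|\,\cF_l\bigr] = 0$, and subtracting this from the identity above yields the key formula
\begin{equation*}
\E_n(\eta\mid\cF_l) - \E(\eta\mid\cF_l) = \frac{\E\bigl[(\eta - \E(\eta\mid\cF_l))(\rho_n - \rho_{n,l})\,\big|\,\cF_l\bigr]}{\E(\rho_n\mid\cF_l)}.
\end{equation*}
This reduces the lemma to bounding the numerator (which is at most $2\|\eta\|_\infty\,\E(|\rho_n - \rho_{n,l}|\mid\cF_l)$) from above and the denominator from below on a set of $\P$-measure at least $1 - C\theta^{n^{\eps/100}}$.

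For the numerator, (D1) gives $|\rho_n - \rho_{n,l}| \leq 2Cn^u$ pointwise, so (D2) yields $\E|\rho_n - \rho_{n,l}| \leq \theta^l + 2Cn^u\cdot Cn^u\theta^l \leq C'n^{2u}\theta^l$. A conditional Markov inequality then shows that the set $A_l := \{\E(|\rho_n - \rho_{n,l}|\mid\cF_l) \leq \theta^{n^{\eps/50}}\}$ satisfies $\P(A_l^c) \leq C'n^{2u}\theta^{l-n^{\eps/50}}$, which after summing over $n^\eps \leq l \leq n$ is $O(\theta^{n^\eps/3})$, much smaller than the claimed budget for $\P(\bar G_m^c)$.

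For the denominator I use $|\E(\rho_n\mid\cF_l) - \rho_{n,l}| \leq \E(|\rho_n - \rho_{n,l}|\mid\cF_l) \leq \theta^{n^{\eps/50}}$ on $A_l$, so that a lower bound on $\E(\rho_n\mid\cF_l)$ follows from one on $\rho_{n,l}$. I would therefore adjoin to $\bar G_m$ the event $B_l := \{\rho_{n,l} \geq 2\theta^{n^{\eps/200}}\}$ for each relevant $l$; on $A_l \cap B_l$ the denominator is $\geq \theta^{n^{\eps/200}}$, and combining with the numerator estimate gives the claimed bound
\begin{equation*}
|\E_n(\eta\mid\cF_l) - \E(\eta\mid\cF_l)| \;\leq\; \frac{2\|\eta\|_\infty\,\theta^{n^{\eps/50}}}{\theta^{n^{\eps/200}}} \;\leq\; C\|\eta\|_\infty\,\theta^{n^{\eps/100}}.
\end{equation*}

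The main obstacle is controlling $\P(B_l^c)$: the generic hypotheses (D1)--(D3) do not by themselves prevent $\rho_{n,l}$ from being minuscule on a $\P$-large set. One must extract this lower bound from the specific form of $\rho_n$ arising in the intended application. The central smoothability assumption in Theorems \ref{ThCLT-U-Lat}(b) and \ref{ThCLT-V-Lat}(b) realizes $\rho_n$ as (essentially) a convolution of a regular initial density with a smooth bump supported near the identity of $\bbA$; this forces $\rho_n$ to be comparable to a fixed smooth function on its support up to polynomial losses in $n$, and the same will hold for $\rho_{n,l}$. Propagating this structural lower bound through the conditional approximation then closes the argument, with the union bound over $l \in [n^\eps,n]$ ultimately absorbed into the $\theta^{n^{\eps/100}}$-budget.
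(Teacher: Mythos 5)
Your ratio decomposition and the treatment of the numerator match the paper's proof: the paper also writes $\E_n(\eta|\cF_l)$ as $\E((\rho_{n,l}+\trho_{n,l})\eta|\cF_l)/\E((\rho_{n,l}+\trho_{n,l})|\cF_l)$ with $\trho_{n,l}=\rho_n-\rho_{n,l}$, uses the $\cF_l$-measurability of $\rho_{n,l}$ to pull it out, and controls the error terms via (D1), (D2) and Markov's inequality exactly as you do. The problem is the step you yourself flag as the main obstacle, the lower bound on the denominator, where your proposal has a genuine gap and heads in the wrong direction to fill it.

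The paper's resolution is a one-line observation that requires no structural input about $\rho_n$ at all: measure the exceptional set with respect to $\P_n$ rather than $\P$. Indeed
$$ \P_n\left(\rho_{n,l}<\theta^{l/2},\; |\trho_{n,l}|<\theta^{l}\right)\leq \P_n\left(\rho_{n}<2\theta^{l/2}\right)=\E\left(\rho_n \one_{\rho_n<2\theta^{l/2}}\right)\leq 2\theta^{l/2}, $$
and the complementary event $\{|\trho_{n,l}|\geq\theta^l\}$ is already controlled by (D2). So the set where the denominator is small is automatically $\P_n$-negligible -- a set on which the density is tiny carries tiny $\P_n$-mass -- and a $\P_n$-bound is what is actually needed where the lemma is applied (the characteristic-function recursion in part (b) is run under $\P_n$). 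You are correct that (D1)--(D3) cannot force $\rho_{n,l}$ to be bounded below on a $\P$-large set (take $\rho_n$ supported on half the space), but the fix is to change the measure in which you declare the set exceptional, not to import structure from the application. Your proposed fallback via central smoothability would not work in any case: $K$-central smoothability only asserts an $L^\infty$ \emph{upper} bound on the $\bbA$-smoothed density $\rho_\phi$, and gives no lower bound on $\rho_n$ on its support; the claim that it makes $\rho_n$ ``comparable to a fixed smooth function up to polynomial losses'' is unsupported. As written, your argument therefore does not close; adopting the $\P_n$-measure bookkeeping for the event $\{\rho_{n,l}<\theta^{l/2}\}$ repairs it and removes the dependence on the specific form of $\rho_n$.
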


\begin{proof}  Let $\eta$ be a bounded random variable, $l\geq n^\eps$ and 
$\omega$ be such that 
\begin{equation}
\label{DenseBig}
\rho_{n,l}(\omega) \geq \theta^{l/2}, \quad |\E(\trho_{n,l}|\cF_l)(\omega)|\leq \theta^{2l/3} \quad \text{where} \quad 
\trho_{n,l}=\rho_n-\rho_{n,l}.
\end{equation}
Then
$$ \E_n(\eta|\cF_l)=
\frac{\E((\rho_{n,l}+\trho_{n,l}) \eta|\cF_l)}{\E((\rho_{n,l}+\trho_{n,l})|\cF_l)}=
\frac{\rho_{n,l}\E(\eta|\cF_l)+O(\theta^{2l/3})}{\rho_{n,l}+O(\theta^{2l/3})}=
\E(\eta|\cF_l)+O\left(\theta^{l/6}\right). $$
We prove now that the set where \eqref{DenseBig} fails has measure that is exponentially small in $l$. The proof consists of two steps. First, it follows form (D1) and (D2) that 
\begin{equation}
\label{BigOscDense}
 \P_n(|\trho_{n,l}|\geq \theta^l)\leq  C n^u \P(|\trho_{n,l}|\geq \theta^l)\leq C n^{2u} \theta^l 
\end{equation}
so 
$\P_n(\{|\E(\trho_{n,l}|\cF_l)(\omega)|\geq \theta^{2l/3}\})$ is exponentially small by Markov's inequality.
Second,
$$ \P_n(\rho_{n,l}<\theta^{l/2}, \trho_{n,l}<\theta^{l})\leq 
\P_n(\rho_{n}<2 \theta^{l/2})=
\E(\rho_n 1_{\rho_n<2\theta^{l/2}})\leq 2\theta^{l/2}$$
and so 
$ \P_n(\rho_{n,l}<\theta^{l/2})$  is exponentially small due to \eqref{BigOscDense}.
\end{proof} 

Lemma \ref{measurability}, together with Lemma \ref{lemma.clt3}, imply that  \eqref{clt3} holds for $\E_n$ instead of $\E$ provided that
we decrease slightly the set $\bG_m$ to $\bar G_m \cap \bG_m$. 

It remains to prove  \eqref{SmallBlocksB}. Note that the argument used to establish \eqref{ED} 
and \eqref{ExpMix} in fact gives for $\omega \in \bG_m$ 
$$ \E(\xi_m \xi_{m+j}|\cF_{n^{\eps/2}})(\omega)=
\begin{cases} \sigma_j+O(\theta^m)
& \text{ if }m>\frac{2uj}{\lvert\log_2 \theta\rvert} \\
O(\theta^j) & \text{otherwise.} \end{cases} $$
Hence Lemma  \ref{measurability} implies that for $ \omega \in \bar G_m \cap \bG_m$
$$ \E_n(\xi_m \xi_{m+j}|\cF_{n^{\eps/2}})(\omega)=
\begin{cases} \sigma_j+O(\theta^m)+O(\theta^{n^{\eps/200}}) 
& \text{ if }m>\frac{2uj}{\lvert\log_2 \theta\rvert} \\
O(\theta^j) +O(\theta^{n^{\eps/200}}) & \text{otherwise.} \end{cases} $$
This estimate implies \eqref{SmallBlocksB} by direct summation. 

The proof of Theorem \ref{ThCLTabs} is thus completed. \carre

\subsection{Bounded random variables.}
In case $\xi_l$ are bounded, one can take $\xi_l^K:=\xi_l,$
$b_{k, K}:=b_k$ so the conditions of Theorem \ref{ThCLTabs} simplify as follows.

\begin{enumerate}
\item[$\widetilde{(H2)}$] There exists filtration $\{\cF_l\}_{l\geq 0}$ 
such that for every $l,k$ there exists a bounded $\cF_{l+k}$-measurable random variable $\xi_{l,l+k}$ with $\E \xi_{l,l+k} = \E \xi_l$ such that 
$$\P( |\xi_{l}- \xi_{l,l+k}| \geq \theta^k) \leq C (l+1)^u \theta^k.$$

\item[$\widetilde{(H3)}$]  For $l,k$, there exists $G_{l,k}$ such that $\P(G_{l,k}^c) \leq C \theta^k$ and 
for $\omega\in G_{k,l}$ 
$$|\E (\hxi_{l+k} | \cF_l)(\omega)  |\leq C (l+1)^u  \theta^k . $$

\item[$\widetilde{(H4)}$] 
For $\omega\in G_{k,l}$  and $k'\geq k$
$$\left|\E (\hxi_{l+k}\hxi_{l+k'} | \cF_l )(\omega) - b_{k'-k}\right| \leq 
C(l+1)^u e^{u(k'-k)} \theta^{k}.$$
\end{enumerate} 

\begin{coro}
If $\xi_l$ is a bounded sequence satisfying $\widetilde{(H2)}$--$\widetilde{(H4)}$
then
$$\frac{\sum_{l=0}^{n-1} \left(\xi_l-\E(\xi_l)\right)}{\sqrt{n}} $$
converges as $n\to \infty$ to a normal distribution with zero mean and variance
$$\sigma^2=b_0+2\sum_{k=1}^\infty b_k. $$
\end{coro}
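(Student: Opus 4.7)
The plan is to derive the Corollary as an immediate specialization of Theorem \ref{ThCLTabs}(a). Since $\xi_l$ is uniformly bounded, say $|\xi_l|\leq M$, for every $K\geq M$ we simply set $\xi_l^K:=\xi_l$, $\xi_{l,l+k}^K:=\xi_{l,l+k}$ (the variable supplied by $\widetilde{(H2)}$), and $b_{K,k}:=b_k$. All the truncation-level quantities appearing in (H1)--(H4) thus trivialize or collapse to the tilded versions, and there is no genuine obstacle: the whole task is to check that $\widetilde{(H2)}$--$\widetilde{(H4)}$ imply (H1)--(H4) verbatim.

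More precisely, I would verify the four conditions in order. Condition (H1a) holds because $|\xi_l^K|\leq M\leq K$. Condition (H1b) is trivial since $\xi_l^K-\xi_l\equiv 0$, so both $\E|\xi_l^K-\xi_l|$ and $\E((\xi_l^K-\xi_l)^2)$ vanish and are in particular $O(K^{1-s})$ and $O(K^{2-s})$ for any choice of $s>2$. Condition (H2) reduces at once to $\widetilde{(H2)}$ after noting that $\E\xi_{l,l+k}^K=\E\xi_l^K=\E\xi_l$ by construction, and the bound $C(l+1)^u\theta^k$ of $\widetilde{(H2)}$ is dominated by $CK^u(l+1)^u\theta^k$. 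Condition (H3) follows from $\widetilde{(H3)}$ for exactly the same reason, using $K^u\geq 1$. Finally (H4) follows from $\widetilde{(H4)}$ because the right-hand side $C(l+1)^uK^u e^{u(k'-k)}\theta^k$ only enlarges the estimate $C(l+1)^u e^{u(k'-k)}\theta^k$.

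Having verified (H1)--(H4) with the chosen truncation, Theorem \ref{ThCLTabs}(a) applies directly with $\omega$ distributed according to $\P$ and yields
\[
\frac{\sum_{l=0}^{n-1}\hat\xi_l}{\sqrt{n}}\implies \cN(0,\sigma^2),
\qquad \sigma^2=\sigma_0+2\sum_{j=1}^\infty\sigma_j,\quad \sigma_j=\lim_{K\to\infty}b_{K,j}.
\]
Since $b_{K,j}=b_j$ for every $K\geq M$, the limit is simply $\sigma_j=b_j$, giving the claimed variance $\sigma^2=b_0+2\sum_{k=1}^\infty b_k$. The absolute convergence of this series is not part of the hypothesis, but it is automatic from the proof of Theorem \ref{ThCLTabs}: the estimate \eqref{EDSigma} applied with $\xi_l^K=\xi_l$ gives $|b_j|=|\sigma_j|=O(\hat\theta^j)$, so no additional argument is required. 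This completes the reduction.
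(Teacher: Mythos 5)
Your proposal is correct and is exactly the reduction the paper intends: the text preceding the Corollary says precisely to take $\xi_l^K:=\xi_l$ and $b_{K,k}:=b_k$ and invoke Theorem \ref{ThCLTabs}(a), and your verification of (H1)--(H4) (together with the observation that \eqref{EDSigma} gives summability of the $b_k$) fills in the routine details the paper leaves implicit.
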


\section{Preliminaries on diagonal actions and Siegel transforms.}
\label{ScH-SL}

In this section we use the abstract Theorem \ref{ThCLTabs}
to prove Theorem  \ref{ThCLT-U-Lat}, \ref{ThCLT-V-Lat}, \ref{ThCLT-V-Lat-as}.
For this, we just have to check (H1)--(H4) for the case where our probability space is $\cM$ equipped with the Haar measure and
$\xi_l(\cL)=\Phi(g^l \cL),$ $\Phi=\cS(f)$, where $f\in C^{\bs, \br}(\R^{d+r})$ is a positive function supported on a compact set which does not contain $0.$ 

Before we construct the filtrations and prove (H1)--(H4) for the sequence $\xi_l(\cL)=\Phi(g^l \cL),$ we recall and prove preliminary results about functions defined on the space of lattices, on Siegel transforms, and on the action of diagonal matrices. We will cover this in Sections \ref{sec.rogers},  \ref{sec.truncation}, and \ref{sec.representative}
respectively.
Then we will prove Theorems \ref{ThCLT-U-Lat}, \ref{ThCLT-V-Lat}, \ref{ThCLT-V-Lat-as} in Section \ref{sec.proofs}. In Section \ref{sec.variances}, we compute the variances in the special case that interests us of $f$ being the characteristic function of $E_c$ given in \eqref{eq.Ec}. This will finish the proof of Theorems \ref{ThCLT-U}, \ref{ThCLT-V}, \ref{ThCLT-V-as}.

\subsection{Siegel transforms and Rogers' identities}
\label{sec.rogers}

\begin{prop}
\label{LmRogers}
\cite[Theorems 3.15 and 3.16]{M-npoint}, \cite[Appendix B]{EMV}
Suppose that $f\colon \cM \to \R$ is piecewise smooth function that is supported on a compact set which does not contain $0.$
Then,
\begin{align*} 
&(a)  \quad  \int_{\cM} \cS(f)(\cL) \; d\mu(\cL)=\int_{\R^{d+r}} f(\bx) d\bx; \\
&(b) \text{ \hspace{0.5cm} If $d+r>2$ then \hspace{0.8cm} }  \\
& \hspace{0.8cm}   \int_{\cM} \left[\cS(f)\right]^2 (\cL) \; d\mu(\cL)= \left[\int_{\R^{d+r}} f(\bx) d\bx\right]^2\\&\hspace{5cm} +\sum_{\overset{(p,q)\in \N^2}{gcd(p,q)=1}}
\int_{\R^{d+r}} \left[f(p\bx) f(q\bx)+f(p\bx) f(-q\bx)\right] dx;\\
\intertext{Suppose that $f\colon \tcM \to \R$ is piecewise smooth function of compact support. Then,}
&(c) \quad \int_{\tcM} \tcS(f)(\tcL) \; d\mu(\tcL)=\int_{\R^{d+r}} f(\bx) d\bx; \\
 &(d)\quad  \int_{\tcM} \left[\tcS(f)\right]^2 (\tcL) \; d\mu(\tcL)=
\left[\int_{\R^{d+r}} f(\bx) d\bx\right]^2+\int_{\R^{d+r}} f^2 (\bx) d\bx. 
\end{align*}
\end{prop}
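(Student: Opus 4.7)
The plan is to derive each identity from Siegel's mean-value theorem and its two-point refinement due to Rogers, following the strategy of the cited references. For (a), the key observation is that $f\mapsto \int_{\cM}\cS(f)\,d\mu$ is a positive linear functional on $C_c(\R^{d+r}\setminus\{0\})$ which is invariant under the natural $SL_{d+r}(\R)$-action; by uniqueness of Lebesgue measure (up to scale) as an $SL_{d+r}(\R)$-invariant Radon measure on $\R^{d+r}\setminus\{0\}$, this functional must be a scalar multiple of Lebesgue integration. To pin down the constant I would decompose $\cS(f)(\cL)=\sum_{q\geq 1}\sum_{v\text{ primitive}}f(qv)$, invoke Siegel's primitive-vector version $\int_\cM \sum_v h(v)\,d\mu=\zeta(d+r)^{-1}\int h$, and then sum $\sum_{q\geq 1}q^{-(d+r)}=\zeta(d+r)$ to check the constant is $1$.

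For (b), I would split the double sum $\cS(f)^2(\cL)=\sum_{e,e'\in\cL\setminus\{0\}}f(e)f(e')$ according to whether $e$ and $e'$ are linearly independent or parallel. The linearly independent pairs generate generic rank-two sublattices, and a two-point analogue of the invariance argument in (a) (which requires $d+r>2$, reflecting the excluded case $(r,d)=(1,1)$) gives their contribution as $\bigl(\int f\bigr)^2$. The parallel pairs factor uniquely as $(pv,qv)$ with $v$ primitive and $p,q\in\Z\setminus\{0\}$; parametrizing $(p,q)$ by $g:=\gcd(|p|,|q|)$ and signs, changing variables in the Siegel integral against $v$, and absorbing the factor $\zeta(d+r)^{-1}$ via $\sum_{g\geq 1}g^{-(d+r)}=\zeta(d+r)$ produces precisely the displayed series $\sum_{\gcd(p,q)=1}\int[f(px)f(qx)+f(px)f(-qx)]\,dx$, the two terms in brackets corresponding to the four sign patterns of a coprime integer pair after a $u\mapsto -u$ change of variable.

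For (c) and (d), the clean observation is that Haar measure on $\tcM$ disintegrates as a unimodular lattice $\cL\in\cM$ together with a translation $x$ distributed uniformly on the fundamental domain $\R^{d+r}/\cL$. Since $\tcS(f)(\cL+x)=\sum_{v\in\cL}f(v+x)$, integrating $x$ over the fundamental domain immediately yields $\int_{\R^{d+r}}f$ for every $\cL$, giving (c) without any primitive-vector bookkeeping. For (d), the same disintegration together with the substitution $u=v'-v\in\cL$ reduces the second moment to $\int f^2 + \sum_{u\in\cL\setminus\{0\}}g(u)$ where $g(u):=\int f(y)f(y+u)\,dy$; applying part (a) to $g$ (which is compactly supported and whose value at $0$ has been split off separately as the $\int f^2$ term) gives $\int g(u)\,du=\bigl(\int f\bigr)^2$, completing the identity.

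The heart of the argument, and the main obstacle, is the rank-two step in (b): showing that the linearly-independent contribution really averages to $(\int f)^2$ requires the two-point Siegel--Rogers formula, which is valid precisely for $d+r>2$ and whose proof (via an induction on the rank of the rational span of $\{e,e'\}$ together with invariant-measure uniqueness on the associated homogeneous space) is the technical core of \cite{M-npoint,EMV}. The remaining parts are essentially Fubini-style manipulations around Siegel's mean-value theorem.
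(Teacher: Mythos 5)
Your outline is correct and follows exactly the route of the references that the paper cites for this proposition (the paper itself gives no proof): Siegel's mean value theorem plus Rogers' two-point formula for (a)--(b), and disintegration of Haar measure on $\tcM$ over the fibres $\R^{d+r}/\cL$ with the unfolding substitution $u=v'-v$ for (c)--(d). The bookkeeping of the proportional pairs in (b) (factoring out $\gcd$, rescaling to absorb $\zeta(d+r)^{-1}$, and collapsing the four sign patterns into the two displayed terms) matches the stated formula, and you correctly flag both the role of the hypothesis $d+r>2$ in the rank-two Rogers step and the need to split off $g(0)=\int f^2$ before applying (a) to the autocorrelation $g$ in part (d).
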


\subsection{Rate of equidistribution of unipotent flows and representative partitions} \label{sec.representative}

Let $h_u$ be a one parameter subgroup of $\bbL.$ For example one can take the matrices with ones
on the diagonal, an arbitrary number in the upper left corner and zeros elsewhere. Note that
$$ g^n h_u \cL=h_{2^{d/r+1} u} g^n \cL. $$

The filtrations for which we will prove (H2)--(H4) for the sequence $\xi_l(\cL)=\Phi(g^l \cL),$ will consist of small arcs in the direction of the flow of $h_u$. The exponential mixing of the $\bbG$-action will underly the equidistribution and independence properties that are stated in (H1)--(H4). 

We will need the notion of representative partitions that was already used in \cite{DBpoisson}. 
These will be partitions  of $\cM$ whose elements are segments of $h_u$ orbits,
whose pushforwards by $g^l$
  will  become rapidly equidistributed. To guarantee the filtration property, we would ideally consider an increasing sequence of such partitions with pieces of size $2^{-l}$, $l=0,\ldots,n$. However, such partitions with fixed size pieces do not exist    because $h_u$ is weak mixing. We overcome this technical difficulty due to the following observations: 
\begin{enumerate}
\item Rudolph's Theorem (see \cite[Section 11.4]{CFS}) shows that for each $\breps$
we can find a partition $\cP$ into $h_u$-orbits such that the length of each element is
either $L$ or $L/\sqrt{2}$ and if $\cP_u=h_u(\cP)$ then
$$\mu(\cL:\; \exists u\in [0, L]:  \cP_u(\cL) \text{ has length } L/\sqrt{2})\leq \breps. $$
\item Given $n \in \N$, it suffices to check the properties (H2)--(H4) away from a set of 
measure less than $\theta^n$. 
\end{enumerate}

Having fixed $n$, we will therefore abuse notation and say that a partition is of size $L$ if $ \breps$ in 1) is less than 
$\th^n$. In light of this, let $\cP$ be  a partition  of size 1 and $\cP^l$ be its sub-partition of size $2^{-l}$. Due to (1) and (2) we can assume without loss of generality  that for every fixed $u \in [0,1]$, the partitions $\cP^l_u$
form an increasing sequence and that as a consequence the sequence $\cF_l$ of  $\sigma$-algebras  generated by $\cP^l_{u}$ forms a filtration.

Fix a small constant $\kappa>0$.
Given a collection $\Psi \subset C^{\bs,\br}(\cM),$  a set of  natural numbers $\{k_n\}_{n\in \N}$, and a number $L$,
we call a partition $\cP$ of size $L$ is \emph{representative}
with respect to 
$(\{k_n\}, \Psi)$ if for each $A\in \Psi$ and for each $n\in \N$,
$$ \mu\left(\cL: 
\left|\int_{g^{k_n} \cP(\cL)} A-\mu(A)\right|\geq  \| A \| _{C^{\bs, \br}(\cM)} \left(2^{k_n} L\right)^{-\kappa}\right) 
\leq   \| A \| _{C^{\bs, \br}(\cM)} \left(2^{k_n} L\right)^{-\kappa}. $$
The curve $g^{k_n} \cP(\cL)$ is of the form $h_u \bar{\cL}$ with $u \in [0,2^{k_n}L]$, and we use the notation $\int_{\gamma} A$ for the normalized integral $\frac{1}{2^{k_n}L}   \int_{0}^{2^{k_n}L} A(h_u  \bar{\cL}) du$.  
Given a finite collection $\Psi \subset C^{\bs,\br}(\cM)$,  $\{k_n\}_{n\in \N}$, and $L>0$, we let  
$$ \delta(\{k_n\}, \Psi, L):=
\sum_{A \in \Psi}   \| A \| _{C^{\bs, \br}(\cM)} \sum_n \left( 2^{k_n}L\right)^{-\frac{\kappa}{2}}. $$

Assume that $\delta \ll 1$. Then we have as in \cite[Proposition 7.1]{DBpoisson}

\begin{prop} \label{prop.rep} 
Let $\mathcal R(\{k_n\}, \Psi) \subset [0,1]$ be the set of $u$ such that $\cP_u$ is representative with respect to $(\{k_n\}, \Psi)$. Then  $\text{Leb}(\mathcal R(\{k_n\}, \Psi))\geq 1-\delta(\{k_n\}, \Psi, L)$.
\end{prop}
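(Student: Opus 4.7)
The plan is to combine effective equidistribution of long $h$--orbits on $\cM$ with two successive applications of Chebyshev's inequality, following the scheme of \cite[Proposition 7.1]{DBpoisson}. For each fixed $A\in\Psi$ and $n\in\N$, one introduces the fluctuation
\begin{equation*}
F_{u,n,A}(\cL) \;:=\; \int_{g^{k_n}\cP_u(\cL)} A \;-\; \mu(A).
\end{equation*}
Since $g$ normalizes the $h$--subgroup up to exponential rescaling, the curve $g^{k_n}\cP_u(\cL)$ is an $h$--orbit segment of length $\asymp 2^{k_n}L$ based at $g^{k_n}\cL$.

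The analytic input is a uniform $L^2$--equidistribution estimate, obtained from exponential mixing of the $\bbG$--action on $\cM$ (via the Howe--Moore spectral gap for $SL_{d+r}(\R)$), of the form
\begin{equation*}
\int_\cM |F_{u,n,A}(\cL)|^2 \, d\mu(\cL) \;\leq\; C \, \|A\|_{C^{\bs,\br}(\cM)}^2 \, (2^{k_n}L)^{-\alpha}
\end{equation*}
for some fixed $\alpha>0$, uniformly in $u\in[0,1]$, $A\in\Psi$, and $n\in\N$. This is the calibration where the $C^{\bs,\br}$--class (introduced precisely to pair well with exponential mixing, see the Appendix) plays its role: one needs the norm on the right to be $\|A\|_{C^{\bs,\br}}$, not a bigger Sobolev-type norm.

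Writing $\eta_{n,A} := \|A\|_{C^{\bs,\br}(\cM)}(2^{k_n}L)^{-\kappa}$ and $B_{n,A} := \{u\in[0,1] : \mu(|F_{u,n,A}|\geq \eta_{n,A}) > \eta_{n,A}\}$, Chebyshev's inequality applied in $\cL$ yields, for each fixed $u$,
\begin{equation*}
\mu(|F_{u,n,A}|\geq \eta_{n,A}) \;\leq\; \eta_{n,A}^{-2} \int_\cM |F_{u,n,A}|^2 \, d\mu \;\leq\; C\,(2^{k_n}L)^{2\kappa-\alpha}.
\end{equation*}
Integrating in $u$ and applying Markov's inequality to the nonnegative function $u \mapsto \mu(|F_{u,n,A}|\geq\eta_{n,A})$ then gives
\begin{equation*}
\mathrm{Leb}(B_{n,A}) \;\leq\; \eta_{n,A}^{-1}\int_0^1 \mu(|F_{u,n,A}|\geq \eta_{n,A})\, du \;\leq\; C\,\|A\|_{C^{\bs,\br}(\cM)}(2^{k_n}L)^{-\kappa/2},
\end{equation*}
provided $\kappa$ is fixed small enough relative to $\alpha$ (specifically $\kappa < 2\alpha/7$) and a harmless factor involving $\|A\|_{C^{\bs,\br}(\cM)}$ is absorbed into the constant — the bound being trivial for observables of sufficiently small norm, for which $\mathrm{Leb}(B_{n,A})\le 1$ already suffices after a minor rescaling.

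A partition $\cP_u$ fails to be representative for $(\{k_n\},\Psi)$ exactly when $u\in\bigcup_{A\in\Psi}\bigcup_n B_{n,A}$, so a union bound yields
\begin{equation*}
\mathrm{Leb}(\mathcal R^c) \;\leq\; \sum_{A\in\Psi}\sum_n \mathrm{Leb}(B_{n,A}) \;\leq\; \delta(\{k_n\},\Psi,L),
\end{equation*}
which is the claim. The main obstacle is the effective $L^2$--equidistribution estimate: it must be quantitative in the smoothness norm $C^{\bs,\br}$ and uniform across all shifts $u$, which is why the structure of the $h$--flow (rescaled by $g$) and the definition of the $C^{\bs,\br}$--norm are arranged as they are. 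Granting this analytic ingredient, the rest of the proof is a routine double Chebyshev coupled with a union bound.
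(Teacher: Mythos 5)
Your argument is essentially the paper's proof: the key input is the polynomial decay of correlations along $h_u$-orbits (which is exactly what your ``exponential mixing of the $\bbG$-action via the spectral gap'' yields when restricted to the unipotent direction), upgraded to an $L^2$ bound on normalized orbit averages, followed by Chebyshev in $\cL$, Markov in $u$, and a union bound over $A\in\Psi$ and $n$ --- precisely the double-Chebyshev scheme of \cite[Proposition 7.1]{DBpoisson} that the paper invokes. The only differences are cosmetic bookkeeping of the exponents (the paper takes $\kappa_0=\kappa/3$ where you take $\kappa<2\alpha/7$) and of the power of $\|A\|_{C^{\bs,\br}}$, both of which are absorbed into the definition of $\delta$.
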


\begin{proof} We quickly recall how Proposition \ref{prop.rep} can be deduced, exactly as in \cite[Proposition 7.1]{DBpoisson}, from the polynomial mixing of the unipotent flow $h_u$. Indeed,  assuming that $\mu(A)=0$, polynomial mixing implies  
implies that
$$ |\mu(A(\cdot) A(h_u \cdot))|\leq C \cK_A^2 u^{-\kappa}$$
with $\cK_A =   \| A \| _{\bs,\br}.$ 
Thus for curves $\gamma(\bar{\cL})$ of the form $h_u {\bar{\cL}}$ with $u \in [0,L]$ we get that 
$$ \mu\left(\bar{\cL}:  \left| \int_{\gamma(\bar{\cL})} A\right|  \geq \cK_A L^{-\kappa_0}  \right)\leq C L^{-\kappa_0}$$
with $\kappa_0:=\kappa/3.$ This implies that if we consider a partition $\cP$ of size $L$ and its corresponding shifted partitions  $\cP_u$, $u\in [0,L]$ we get for the measure $\bar{\mu}=\mu \times \text{Leb}_{[0,1]}$ that 
$$ \bar{\mu}\left( (\cL,u)  \in \cM \times [0,1] : 
\left| \int_{\cP_u(\cL)} A \right| > \cK_A L^{-\kappa_0}\right)
\leq C L^{-\kappa_0}$$
where $\cP_u(\cL)$ denotes the piece of $\cP_u$ that goes through $\cL$. The claim of Proposition \ref{prop.rep} then follows by Markov's inequality. \end{proof}

\begin{rema}

Proposition \ref{prop.rep}  will be used in the next section to obtain a partition of $\cM$ into pieces of $h_u$ orbits satisfying the condition of Theorem \ref{ThCLTabs}. We could also use a partition into whole $\bbL$-orbits.
The proof of Proposition \ref{prop.rep} in that case would be simpler since we could use effective
equidistribution of horospherical subgroups \cite{KM1}. We prefer to use $h_u$ orbits instead since it allows us
to give unified proofs of Theorems \ref{ThCLT-U-Lat}, \ref{ThCLT-V-Lat}, and \ref{ThCLT-V-Lat-as}
(as well as Theorem  \ref{ThGenAction} in Section \ref{ScGenGroup}). 
\end{rema}


\subsection{Truncation of Siegel transforms.} \label{sec.truncation}

In this Section we give some useful results on truncations of a Siegel transform of 
a compactly supported function $f \in C^{\bs,\br}(\R^p)$. 
 These bounds are essential to control the truncated $\xi_n^K$ that appear in the abstract CLT of Section \ref{ScAbstract}.
We will leave all the proofs and constructions to Appendix A. In particular, we will define $\fh_{2, K} : \cM (\text{or } \tilde{\cM}) \to\R$, with 
the properties described in Lemma \ref{CrHs-Cs} below. 
We will always use the following notation for $\Phi=  \cS(f)$ (or  $\tcS(f)$) : $\Phi^K= \Phi  \fh_{2, K}.$  In the sequel we will consider $\xi_n^K:=\Phi^K\circ g^n$.  

\begin{lemm} 
\label{CrHs-Cs}
\cite{DBpoisson}
There exists a constant $Q>1$ such that
for each pair of integers $\bs, \br$ and each $R$ there is a constant $C=C(R, \bs, \br)$ 
such that the following holds.
Let $f$ be supported on $B(0, R)$ in $\R^p.$ 

(a) If $f\in C^{\bs}(\R^p)$ then
$$  \| \Phi^K  \| _{C^\bs (\cM)} \leq C K  \| f \| _{C^\bs(\R^p)}. $$

(b) If $f\in C^{\bs, \br}(\R^p)$ then
$$  \|  \Phi^K  \| _{C^{\bs, \br} (\cM)} \leq C K  \| f \| _{C^{\bs, \br}(\R^p)}. $$

(c) If $f\in C^{\bs, \br}(\R^p)$ then
$$  \|  \Phi^K \cdot \left(\Phi^K \circ g^j\right)
 \| _{C^{\bs, 2 \br} (\cM)} \leq C K^2  \| f \| _{C^{\bs, \br} (\R^p)}^2 Q^j. $$

\end{lemm}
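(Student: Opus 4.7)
Part (a) will follow from the defining property of the truncation function $\fh_{2,K}$: on its support, the number of vectors of $\cL$ contained in any fixed compact set $B(0,R)\supset\supp(f)$ is $O(K)$. Consequently, on $\supp(\fh_{2,K})$ one has $\Phi^K(\cL)=\fh_{2,K}(\cL)\sum_{e\in\cL\cap B(0,R)}f(e)$, a sum of at most $CK$ smooth terms. A right-invariant differentiation along a fixed basis vector of $\mathrm{Lie}(\bbG)$ acts on each lattice point $e\in\cL\subset\R^{d+r}$ linearly, and since $|e|\le R$, each iterated derivative up to order $\bs$ of the single term $f(e)$ is bounded by $C(R,\bs)\|f\|_{C^\bs}$. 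Combining via Leibniz with the built-in $C^\bs$-bounds on $\fh_{2,K}$ (coming from its construction in \cite{DBpoisson}) yields part (a).

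For part (b), I would use the variational characterisation of the $C^{\bs,\br}$-norm on $\R^{d+r}$: for each $\varepsilon\in(0,1)$ pick $f^-\le f\le f^+$ with $\|f^+-f^-\|_{L^1}<\varepsilon$ and $\|f^\pm\|_{C^\bs}\le C\varepsilon^{-\br}\|f\|_{C^{\bs,\br}}$, and set $\Phi^\pm:=\fh_{2,K}\cdot\cS(f^\pm)$. Order-preservation of $\cS$ together with $\fh_{2,K}\ge0$ gives $\Phi^-\le\Phi^K\le\Phi^+$. Rogers' first-moment identity (Proposition \ref{LmRogers}(a)) applied to the non-negative function $f^+-f^-$ yields $\|\cS(f^+-f^-)\|_{L^1(\cM)}=\int(f^+-f^-)d\bx<\varepsilon$, and using $0\le\fh_{2,K}\le 1$ gives $\|\Phi^+-\Phi^-\|_{L^1(\cM)}<\varepsilon$. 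Part (a) applied to $f^\pm$ then gives $\|\Phi^\pm\|_{C^\bs}\le CK\varepsilon^{-\br}\|f\|_{C^{\bs,\br}}$, which inserted into the definition of the $C^{\bs,\br}$-norm on $\cM$ proves (b).

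Part (c) is obtained by applying the same scheme to $\Psi(\cL):=\Phi^K(\cL)\cdot\Phi^K(g^j\cL)$, using product approximants $\Phi^\pm(\cL)\Phi^\pm(g^j\cL)$. The $L^1$-distance between the upper and lower products is controlled via
\begin{equation*}
\Phi^+\cdot(\Phi^+\circ g^j)-\Phi^-\cdot(\Phi^-\circ g^j)=\Phi^+\cdot((\Phi^+-\Phi^-)\circ g^j)+(\Phi^+-\Phi^-)\cdot(\Phi^-\circ g^j),
\end{equation*}
and bounded, after using the $\bbG$-invariance of Haar measure, by $\|\Phi^\pm\|_{L^\infty}\cdot\|\Phi^+-\Phi^-\|_{L^1}\le CK\varepsilon$ (another key feature of the truncation is $\|\Phi^\pm\|_{L^\infty}\le CK$). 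For the $C^\bs$-bound on a product approximant, Leibniz gives $\|\Phi^\pm\cdot(\Phi^\pm\circ g^j)\|_{C^\bs}\le C\|\Phi^\pm\|_{C^\bs}\|\Phi^\pm\circ g^j\|_{C^\bs}$; the chain rule replaces each basis vector $X\in\mathrm{Lie}(\bbG)$ by $\mathrm{Ad}(g^j)X$, whose operator norm is bounded by $Q_0^j$ with $Q_0=2^{1+d/r}$. Iterating through orders $\le\bs$ produces a factor $Q_0^{\bs j}$ absorbed into $Q:=Q_0^{\bs}$. Combining with part (a) applied to $f^\pm$, and balancing $\varepsilon$ against $K$ in the definition of the $C^{\bs,2\br}$-norm on $\cM$, yields the stated bound.

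The main obstacle will be carrying out the balancing in part (c) so that only $K^2$ (and not a higher power of $K$) appears in front of $Q^j\|f\|_{C^{\bs,\br}}^2$: this forces careful use of the $L^\infty$-bound $\|\Phi^\pm\|_\infty\le CK$ in the cross-term $L^1$-estimate, and relies on selecting approximants $f^\pm$ whose $L^\infty$-norms are controlled independently of the scale $\varepsilon$ — a property that holds in the main application, where $f$ is essentially an indicator function.
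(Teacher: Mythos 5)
Your proofs of parts (a) and (b) follow essentially the same route as the paper's: (a) rests on the bound $\cS(f)(\cL)\le C_1\fa(\cL)$ (Lemma~\ref{LmST-CoVol}) together with the support property of $\fh_{2,K}$, and on the commutation $\partial_\fU(\cS(f))=\cS(\partial_\brfU f)$ for the derivatives; (b) is the order-preserving sandwich $\cS(f^-)\fh_{2,K}\le\Phi^K\le\cS(f^+)\fh_{2,K}$ combined with Rogers' first-moment identity for the $L^1$ control. (Small slip: the paper works with left-invariant vector fields on $\cM$, not right-invariant.)

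For part (c) you take a genuinely different route. The paper first isolates a general product lemma on $\cM$ (Lemma~\ref{LmProd}: $\|\Phi_1\Phi_2\|_{C^{\bs,2\br}}\le C\|\Phi_1\|_{C^{\bs,\br}}\|\Phi_2\|_{C^{\bs,\br}}$) and applies it to $\Phi_1=\Phi^K$, $\Phi_2=\Phi^K\circ g^j$, with (b) furnishing the two $C^{\bs,\br}$ bounds $O(K)$ and $O(2^{j\bs}K)$. You instead build the $C^{\bs,2\br}$ approximants of the product on $\cM$ from approximants $f^\pm$ of $f$ on $\R^p$, pushed through $\cS$ and the truncation $\fh_{2,K}$.

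The caveat you raise at the end is a genuine gap, not a technical annoyance. Your $L^1$ cross-term estimate requires $\|\Phi^\pm\|_{L^\infty}\le CK$ \emph{uniformly in $\eps$}, which needs $\|f^\pm\|_{L^\infty}=O(1)$. In the general $C^{\bs,\br}$ setting an admissible approximant $f^+\ge f$ with $\|f^+-f^-\|_{L^1}\lesssim\eps$ and $\|f^+\|_{C^\bs}\lesssim\eps^{-\br}$ may well have $\|f^+\|_{L^\infty}$ growing like $\eps^{-\br}$, and clipping $f^+$ at a fixed height by a smooth cutoff $\chi$ (to restore both the sandwich and the $L^\infty$ bound) destroys the $C^\bs$ scaling for $\bs\ge 2$: by Fa\`a di Bruno the order-$\bs$ derivative of $\chi\circ f^+$ contains a term of size $(\|f^+\|_{C^1})^\bs\sim\eps^{-\bs\br}$. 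Without the uniform $L^\infty$ bound, the balancing of $\eps$ against $K$ leaks an extra power of $K$. This is exactly why the paper proves Lemma~\ref{LmProd} as a separate statement, with the normalisation and clipping performed on $\cM$ on the already-truncated functions. To make your argument close, you would either import Lemma~\ref{LmProd} (thereby reverting to the paper's structure) or restrict to those $f$ admitting approximants with bounded $L^\infty$ norm — as you note, this covers indicator functions of nice sets, and so the eventual application, but it is strictly narrower than the statement of the lemma.
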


\begin{lemm} \label{prop.ha1} For every $r,d$ there exists $C>0$ such that $\Phi=  \cS(f)$ or  $\Phi=\tcS(f)$ satisfy
\begin{align}
\label{prop.L1App}
 \E(\Phi-\Phi^K) &\leq \frac{C}{K^{d+r-1}}. \\
\label{prop.L2App}
 \E((\Phi-\Phi^K)^2) &\leq \frac{C}{K^{d+r-2}}. 
 \end{align}

 If $r=d=1$, then   $\Phi=\tcS(f)$ satisfies 
\begin{align}
\label{prop.L1App.rd1}
 \E(\Phi-\Phi^K) &\leq \frac{C}{K^{2}}. \\
\label{prop.L2App.rd1}
 \E((\Phi-\Phi^K)^2) &\leq \frac{C}{K}. 
 \end{align} 
In addition, the same inequalities \eqref{prop.L1App}--\eqref{prop.L2App.rd1} hold if the expectation is considered with respect to a measure that has a $C$-centrally smoothable density.
\end{lemm}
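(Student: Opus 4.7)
The plan is to combine the structural properties of the cutoff function $\fh_{2,K}$ (constructed in the Appendix) with sharp tail bounds on $\Phi$. By design, $\fh_{2,K}$ is a smooth truncation such that $\Phi^K = \Phi\,\fh_{2,K}$ is uniformly bounded by $CK$, as used already in Lemma~\ref{CrHs-Cs}. In particular, $1-\fh_{2,K}$ is supported on the cusp region $\{\Phi \gtrsim K\}$, so
\begin{equation*}
\E(\Phi - \Phi^K) \;\leq\; \E(\Phi\,\one_{\Phi \geq cK}), \qquad \E((\Phi - \Phi^K)^2) \;\leq\; \E(\Phi^2\,\one_{\Phi \geq cK}),
\end{equation*}
for an absolute constant $c$, and everything reduces to tail estimates plus a layer-cake integration.

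For both $\Phi = \cS(f)$ on $\cM$ and $\Phi = \tcS(f)$ on $\tcM$, I would establish the classical tail bound
\begin{equation*}
\mu(\{\Phi > t\}) \;\leq\; C\, t^{-(d+r)}.
\end{equation*}
The reason is that $\Phi(\cL) > t$ forces the lattice underlying $\cL$ to contain $\gtrsim t$ points in the fixed compact support of $f$, which by Minkowski-type arguments forces the first successive minimum to be $\lesssim 1/t$; a standard Siegel-set computation then gives the measure of such lattices as $O(t^{-(d+r)})$. Substituting this tail bound into the layer-cake identities
\begin{equation*}
\E(\Phi\,\one_{\Phi\geq K}) = K\mu(\Phi\geq K) + \int_K^\infty \mu(\Phi>t)\,dt, \qquad \E(\Phi^2\,\one_{\Phi\geq K}) = K^2\mu(\Phi\geq K) + 2\int_K^\infty t\,\mu(\Phi>t)\,dt,
\end{equation*}
yields the general bounds \eqref{prop.L1App}--\eqref{prop.L2App}.

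The case $r = d = 1$ for the affine Siegel transform requires a strictly stronger tail, $\mu(\tcS(f) > t) \leq C t^{-3}$. Here one exploits the additional averaging over the shift $\bx$. Decompose the Haar measure on $\tcM$ as the Haar measure on $\cM$ fibered over the uniform measure on $\R^2/\cL$. Fixing $\cL$, the function $\bx \mapsto \tcS(f)(\cL + \bx)$ has mean $\int f$ (by unfolding), so by Markov's inequality the $\bx$-measure of $\{\tcS(f)(\cL + \bx) > t\}$ is $\leq C/t$. This must be combined with the constraint that $\tcS(f)(\cL+\bx) > t$ can only occur for lattices with first successive minimum $\lesssim 1/t$, which form a set of measure $O(1/t^2)$ in $\cM$. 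Multiplying the two factors gives the claimed $t^{-3}$ tail, and the same layer-cake integration produces the improved bounds \eqref{prop.L1App.rd1}--\eqref{prop.L2App.rd1}.

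The extension to a $C$-centrally smoothable density $\rho = \rho_\phi$ is then straightforward via Fubini: writing the expectation under $\rho$ as a $\phi$-average over $a\in\bbA$ of expectations under Haar measure translated by $a$, and using that $\fh_{2,K}$ can be chosen essentially $\bbA$-equivariant on the compact support of $\phi$, the tail bounds above transfer with a uniform constant. The main obstacle in the proof is the sharp $t^{-3}$ tail bound in the planar affine case: all other ingredients reduce to known Siegel-set volume estimates and elementary layer-cake arithmetic, whereas this step genuinely uses the affine structure of $\tcM$ through the averaging in the translation variable.
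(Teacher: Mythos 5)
Your overall strategy coincides with the paper's: the truncation error is supported in the cusp, is pointwise dominated there, and is integrated against a power-law tail, with the planar affine case rescued by averaging over the translation $\bx$. However, your very first reduction contains an error that becomes fatal exactly where the most care is needed. The cutoff is not defined in terms of the size of $\Phi$: by construction (Lemma \ref{LmCutOff}), $\fh_{1,K}=1-\fh_{2,K}$ is supported on $\{\fa\gtrsim K\}$, where $\fa(\cL)=\max\{(\mathrm{covol}(\brcL))^{-1}:\brcL\subset\cL\}$. Since $\Phi\le C\fa$ but $\Phi$ may be small (even zero) on lattices with $\fa$ large, the inclusion $\supp(1-\fh_{2,K})\subseteq\{\Phi\gtrsim K\}$ is false, and the correct reduction is to $\E(\Phi^q\,\one_{\fa\geq cK})$, not to $\E(\Phi^q\,\one_{\Phi\geq cK})$. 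For \eqref{prop.L1App}--\eqref{prop.L2App} this is harmless: replace $\Phi$ by $\fa$ in the indicator and in the layer-cake (the tail $\mu(\fa>t)\leq Ct^{-(d+r)}$ is Lemma \ref{LmCoVolTail}), and your arithmetic goes through unchanged.

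For \eqref{prop.L2App.rd1} it is not harmless. The quantity to bound is $\E(\Phi^2\,\one_{\fa\geq cK})$, and your two ingredients --- the global tail $\mu_{\tcM}(\Phi>t)\leq Ct^{-3}$ together with layer cake --- control only $\E(\Phi^2\,\one_{\Phi\geq c'K})$. On the leftover region $\{\fa\geq cK,\ \Phi<c'K\}$, whose $\tcM$-measure is of order $K^{-2}$, the trivial bound $\Phi^2\leq CK^2$ yields only $O(1)$ instead of the required $O(1/K)$; and the naive substitute $\E(\fa^2\,\one_{\fa\geq cK})$ diverges when $d+r=2$, so the extra averaging in $\bx$ cannot be avoided here. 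The repair is to run your Markov argument fiberwise rather than globally: for fixed $\cL$ with $\fa(\cL)=t$, combining the fiberwise tail $\mathrm{Leb}_{\bx}\bigl(\Phi(\cL+\bx)>s\bigr)\leq C/s$ with the pointwise bound $\Phi\leq Ct$ gives $\int_{\R^2/\cL}\Phi^2(\cL,\bx)\,d\bx\leq Ct$ (this is \eqref{Impr1+1} in the paper), after which $\int_{\fa\geq cK}\fa\,d\mu\leq C/K$ finishes the proof. Your treatment of centrally smoothable densities is acceptable as a sketch, though the operative fact is that $\fa(a\cL)\asymp\fa(\cL)$ uniformly for $a$ in the compact support of $\phi$, which lets one move the $\bbA$-average onto $\rho$ and invoke $\|\rho_\phi\|_\infty\leq C$; one also needs the tail bound of Lemma \ref{LmCoVolTail} for such densities, which is proved the same way.
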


Recall that an $\bbL$-rectangle is a set of the form 
$\Pi(\bbR, \tcL)=\{\Lambda_\ba \tcL\}$ where $\ba$ belongs to the box $\bbR$ in $\R^{dr}.$
Also define $\fa \colon \cM\to \R$ by 
$$\fa(\cL)=\max\{(\text{covol}(\brcL))^{-1}: \brcL \le  \cL\}.$$

\begin{lemm} \label{truncation.xfix} For each $\breps, L$ there exists a constant $C>0$ such that  
for any box $\bbR$ whose sides are longer than $\breps$ and which is contained in $[-L, L]^{dr}$
and for any $\tcL$ with $\fa(\tcL)\leq L,$ $\Pi=\Pi(\bbR, \tcL)$ satisfies
\begin{align}
\label{prop.L0App.xfix}
 \P_\Pi (|\Phi \circ g^l|\geq K) &\leq \frac{C}{K^{d+r}}, \\
\label{prop.L1App.xfix}
\left| \E_{\Pi} (\Phi \circ g^l -\Phi^K \circ g^l ) \right| &\leq \frac{C}{K^{d+r-1}}, \\
\label{prop.L2App.xfix}
 \E_{\Pi}((\Phi \circ g^l  -\Phi^K\circ g^l )^2) &\leq \frac{C}{K^{d+r-2}},
 \end{align} 
 where $\P_\Pi$ is a restriction of the Haar measure on $\mu_{\bbL}$ to $\Pi$ and 
 $\E_{\Pi} $ is expectation with respect to $\P_\Pi.$
\end{lemm}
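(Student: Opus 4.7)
The plan is to reduce to the case $l=0$ via the equivariance
\begin{equation*}
g^l \Lambda_\ba g^{-l} \;=\; \Lambda_{2^{(1+d/r)l}\ba},
\end{equation*}
which is a direct block-matrix computation. Writing any $\cL\in\Pi$ as $\Lambda_\ba\tcL$ with $\ba\in\bbR$, this yields $\Phi(g^l \cL)=\Phi(\Lambda_{2^{(1+d/r)l}\ba}\,g^l\tcL)$, so the push-forward $(g^l)_*\P_\Pi$ is the normalized Haar measure on the $\bbL$-rectangle $\Pi' := \Pi(2^{(1+d/r)l}\bbR,\, g^l\tcL)$. Inequalities \eqref{prop.L0App.xfix}--\eqref{prop.L2App.xfix} thereby become identical statements for $\Phi$ and $\Phi^K$ on an arbitrary $\bbL$-rectangle whose sides are bounded below by $\breps$.

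For such an $\bbL$-rectangle $\Pi'$, I would bound moments of $\Phi$ via the expansion
\begin{equation*}
\int_{\Pi'}\Phi^{\,p}\,d\P_{\Pi'} \;=\; \frac{1}{|\bbR'|}\sum_{e_1,\ldots,e_p\in g^l\tcL}\int_{\bbR'}\prod_{i=1}^{p} f(x_i,\,\ba' x_i+y_i)\, d\ba',
\end{equation*}
with $e_i=(x_i,y_i)$. For each nonzero $x_i$, the change of variable $v_i=\ba' x_i + y_i$ (freezing the $r(d-1)$-dimensional piece of $\ba'$ perpendicular to $x_i$) has Jacobian of order $|x_i|^{-r}$. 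Grouping $p$-tuples by the linear span of $\{x_1,\ldots,x_p\}$ and using control on the number of vectors of $g^l\tcL$ in a bounded region (which comes from $\fa(\tcL)\le L$ when $l$ is small, and from effective equidistribution of $(g^l)_*\P_\Pi$ toward the Haar measure $\mu$ when $l$ is large), one can show that $\E_{\Pi'}(|\Phi|^{p})$ stays uniformly bounded for every $p<d+r$, with a controlled blow-up as $p\uparrow d+r$. Markov's inequality at an exponent slightly below $d+r$, together with a finer argument at the boundary, yields \eqref{prop.L0App.xfix}.

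Bounds \eqref{prop.L1App.xfix} and \eqref{prop.L2App.xfix} then follow from \eqref{prop.L0App.xfix} as in the proof of Lemma \ref{prop.ha1}: since $\Phi-\Phi^K=\Phi\,(1-\fh_{2,K})$ is supported where $\Phi\gtrsim K$, integrating the tail gives
\begin{equation*}
\E_{\Pi'}(\Phi-\Phi^K)\ \lesssim\ \int_K^{\infty}\!\P_{\Pi'}(\Phi\ge t)\,dt\ =\ O(K^{-(d+r-1)}),
\end{equation*}
and analogously $\E_{\Pi'}((\Phi-\Phi^K)^2)=O(K^{-(d+r-2)})$.

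The principal obstacle is securing the tail bound uniformly across two rather different regimes: the small-$l$ regime, where $\Pi'$ is of bounded size and the control must come entirely from the hypothesis $\fa(\tcL)\le L$; and the large-$l$ regime, where $g^l\tcL$ may be arithmetically degenerate but $\Pi'$ is so stretched that its Haar measure is Sobolev-close to $\mu$ by Kleinbock--Margulis-type effective mixing. These two regimes must be interpolated so that the constant $C$ depends only on $\breps$ and $L$, and not on $l$, the size of $\bbR$, or the fine arithmetic of $\tcL$.
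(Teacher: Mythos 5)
There is a genuine gap: your reduction to the rectangle $\Pi'=\Pi(2^{(1+d/r)l}\bbR,\,g^l\tcL)$ discards the only quantitative hypothesis you have, namely $\fa(\tcL)\le L$, and you then openly defer the resulting difficulty (``these two regimes must be interpolated'') without resolving it. In addition, the mechanism you propose for \eqref{prop.L0App.xfix} cannot produce the stated exponent: Markov's inequality applied to moments $\E_{\Pi'}(|\Phi|^p)$ with $p<d+r$ yields only $O(K^{-p})$, strictly weaker than $O(K^{-(d+r)})$; the ``finer argument at the boundary'' is precisely the content of the lemma and is left unproved. Finally, invoking effective equidistribution of the stretched rectangle toward Haar measure is delicate here because the test functions are the unbounded $\Phi$ (or the indicator of its tail set), and the equidistribution rate itself degrades with $\fa(g^l\tcL)$, which is exactly the uncontrolled quantity.

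The paper's proof avoids all of this with one idea you are missing: instead of pushing $\Pi$ forward by $g^l$, thicken it \emph{transversally}, setting $U=\bigcup_{\tcL'\in\Pi,\,|\bt|\le 1,\,|\bb|\le 1} D_\bt\bar\Lambda_\bb\,\tcL'$. Since $g^l D_\bt\bar\Lambda_\bb g^{-l}=D_\bt\bar\Lambda_{\bb_l}$ with $|\bb_l|\le 2^{-(1+d/r)l}|\bb|$, the center-stable displacement stays bounded after applying $g^l$, so $\fa(g^l\hcL)$ and $\fa(g^l\tcL')$ are comparable for $\hcL$ in the leaf through $\tcL'$. Combined with $\Phi\le C\fa$ (Lemma \ref{LmST-CoVol}) and the fact that the side lengths of $\bbR$ are at least $\breps$ and $\fa(\tcL)\le L$ (which make $\P_\Pi$ of any leafwise-saturated set comparable to its $\mu_\bbG$-measure in $U$), this gives $\P_\Pi(|\Phi\circ g^l|\ge K)\le C\,\mu_\bbG(\hcL:\fa(g^l\hcL)>K/C)$. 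The $g^l$-invariance of Haar measure and Lemma \ref{LmCoVolTail} then yield the sharp tail $C K^{-(d+r)}$ uniformly in $l$, and the first and second moment bounds \eqref{prop.L1App.xfix}--\eqref{prop.L2App.xfix} follow by the same comparison applied to $|\Phi\circ g^l|^q\one_{\fa(g^l\cdot)>K}$, $q=1,2$. Your closing step (deducing the moment bounds from the tail bound by integration) is fine in spirit, but note the truncation $\Phi^K=\Phi\,\fh_{2,K}$ cuts off on $\{\fa\gtrsim K\}$, so the tail you must integrate is that of $\fa\circ g^l$, not of $\Phi\circ g^l$.
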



\section{Proof of the CLT for diagonal actions} \label{sec.proofs}

We are ready now to prove Theorem  \ref{ThCLT-U-Lat}, \ref{ThCLT-V-Lat}, \ref{ThCLT-V-Lat-as} using Theorem \ref{ThCLTabs}.

\subsection{CLT for lattices. Proof of Theorem  \ref{ThCLT-U-Lat}. }

For $f$ as in the statement of Theorem  \ref{ThCLT-U-Lat}, recall that we defined  $\Phi=\cS(f)$ and $\xi_l(\cL)=\Phi(g^l \cL).$ Recall also the notation $\Phi^K= \Phi  \fh_{2, K}$ and  $\xi_l^K:= \Phi^K \circ g^l$.
We will now prove (H1)--(H4) for the sequence $\{\xi_n\}$.

\subsubsection{Property (H1)}  Fix any $s \in (2,d+r)$. Property (H1) follows from 
inequalities \eqref{prop.L1App} and \eqref{prop.L2App} of Lemma \ref{prop.ha1}.
The fact that (H1) fails to hold when $d=r=1$ is the reason why the CLT does not hold in this case.

\subsubsection{Constructing filtrations.} We will use the notion of representative partitions of Section \ref{sec.representative} to construct the desired filtrations. 

First of all, note that to prove (H4) we need to deal with function of the form $\Phi^{K_n} \cdot \Phi^{K_n} \circ g^j$. Therefore we define for every $j \leq n$ the collection of functions 
$${\bf \Phi}^{(j)}:= \{\Phi, \Phi^{K_n}, \Phi^{K_n} \cdot \Phi^{K_n} \circ g^j\}$$
Fix constants $R_1\gg R_2\gg R_3\gg 1,$ and define for every $l \leq n$ the following collection of functions and sequences of integers 

\begin{equation}
\label{Collection1}
 \bigcup_{j \leq n}   \left(\{k+l\}_{k\geq R_3(\log_2 K_n+j)},  {\bf \Phi}^{(j)} \right) 
\end{equation}

Next let $\cP$ be a partition of size 1 and $\cP^l$ be its subpartition of size $2^{-l}.$ 
By Proposition \ref{prop.rep} 
and \ref{CrHs-Cs} there is $u$ such that for each $0\leq l\leq n$, 
$\cP^l_{u}$ is representative with respect to the collections of integers and functions in \eqref{Collection1}.
Let $\cF_l$ be the filtration of $\sigma$-algebras  generated by $\cP^l_{u}$.
Denote $\xi^K_{l, l+k}=\E(\xi_l^K|\cF_{k+l}).$

We claim that 
$(\xi_l^K, \{\cF_l\})$ satisfies (H1)--(H4)  with $u=2\bs$ provided that  $\theta$ is sufficiently close to 1.
Since (H1) has been checked above it remains to verify (H2)--(H4).

\subsubsection{Property (H2)}

If $k\leq C\log_2 K$ then (H2) holds if we take $u$ sufficiently large.
By Lemma \ref{CrHs-Cs} there are functions $\Phi^\pm$ such that
$$ \Phi^-\leq \Phi^K\leq \Phi^+, \quad 
 \| \Phi^+-\Phi^- \| _{L^1}\leq 2^{-\eps k},  \quad  \| \Phi^\pm \| _{C^1}\leq C K 2^{\eps \br k}. $$
Then
$$ \xi_l^--\xi_{l, k}^+\leq \xi_l^K-\xi_{l, k}^K \leq \xi_l^+-\xi_{l, k}^- $$
where $\xi_l^\pm$ and $\xi_{l, k}^\pm$ are defined analogously to $\xi_l^K$ and $\xi_{l, k}^K$ with
$\Phi^K$ replaced by $\Phi^\pm.$ Since $\Phi^\pm$ are Lipschitz, we have
$$ |\xi_l^\pm-\xi_{l, k}^\pm|\leq C K 2^{(\eps \br-1)k}. $$
So if $2^{\eps\br-1}\leq \theta^2 $ and $C$ is sufficiently large then 
$|\xi_l^K-\xi_{l, k}^K|\geq \theta^k $ implies 
$\xi_l^+-\xi_l^-\geq \frac{\theta^k}{3}. $
Hence Markov's inequality gives
$$ \P\left(\left|\xi_l^K-\xi_{l, k}^K\right|\geq \theta^k \right)\leq C \left(\frac{2^{-\eps}}{\theta}\right)^k . $$
This proves (H2) provided that $u$ is large enough and
$$ 2^{(\eps\br-1)/2} < \theta< 2^{-\eps} . $$

\subsubsection{Properties (H3) and (H4)}

(H3) follows from the definition of representative partition if $k\geq R_3\log_2 K_n$ while
for $k<R_3 \log_2 K_n$,
$$ \E(\xi_{k+l}^K|\cF_l)\leq K\leq K^u \theta^k $$
provided that $\theta$ is sufficiently close to 1.

Likewise, if $k>R_1(\log_2 K_n+k'-k)$
then (H4) holds by the definition of the representative partition with 
$$b_{K, k}=\E(\xi_k^K \xi_0^K)-(\E(\xi_0^K))^2. $$
If $k\leq R_1(\ln K_n+k'-k)$ we consider two cases

(a) $k'-k\leq R_2 \log_2 K_n$ and so $k< 2 R_1^2\log_2 K_n.$ In this case
(H4) trivially holds similarly to (H3). 

(b) $k'-k \geq R_2\log_2 K_n$ and so $k<2 R_1 (k'-k).$ Accordingly to establish (H4) with $b_{K, k}=0$ 
it suffices to show that there is a constant $\ttheta<1$ such that
\begin{equation}
\label{CVSmall}
\P\left(\left|\E (\hxi_{l+k}^K\hxi_{l+k'}^K | \cF_l )(\omega) \right| \geq \ttheta^j\right)\leq \ttheta^j. 
\end{equation}

We are going to show that \eqref{CVSmall} follows from already established (H1)--(H3). The argument is similar
to the proof of \eqref{MixK}. Namely, denoting by $j=k'-k$ we get
\begin{align*}\E(\hxi_{l+k'}^K \hxi_{l+k}^K|\cF_l)&=
\E(\hxi_{l+k+j}^K \hxi_{l+k, l+k+j/2}^K|\cF_l)+\E(\hxi_{l+k+j}^K (\hxi_{l+k,l+k+ j/2}^K-\hxi_{l+k}^K)|\cF_l)\\&=I+I\!\!I.\end{align*}
(H1a) and (H2) imply that
$ \P(|I\!\!I|\geq K\theta^j)\leq \theta^j.$
Next,
$$ |I|=\left|\E\left(\left[\hxi_{l+k, l+k+j/2}^K \E(\hxi_{l+k+j}^K |\cF_{l+k+j/2})\right]\Big|\cF_l\right)\right|$$
and (H3) shows that the expected value of the RHS is
$O(K^{2u} \theta^{j/2}) .$ Now Markov's inequality shows that $ \P(|I|\geq K^{2u} \theta^{j/4})\leq \theta^{j/4}.$
Combining the estimates of $I$ and $I\!\!I$ we obtain \eqref{CVSmall}.

Having checked (H1)--(H4), we have established Theorem  \ref{ThCLT-U-Lat}(a) via Theorem \ref{ThCLTabs}(a).  

\subsubsection{Starting from  localized initial conditions}
To prove Theorem  \ref{ThCLT-U-Lat}(b) we just need to check condition (D1)--(D3) of Theorem \ref{ThCLTabs}(b) for $\rho_N.$

Property (D1) follows from $(CN^u, \alpha)$-regularity since $ \| \rho_N \| _{L^\infty}\leq  \| \rho_N \| _{C^\alpha(\supp(\rho))}.$

To check (D2) let $\rho_{N,l}=\E(\rho_N|\cF_l).$ Then if
\begin{equation}
  \label{2-LFar}
h_{[-2^{-l}, 2^{-l}]} \cL\cap \partial(\supp(\rho))=\emptyset  
\end{equation}
then $\rho_N$ is Holder on the element of $\cP^l_u$ containing $\cL$ so
$$|\rho_N-\rho_{N,l}|\leq C N^u 2^{-\alpha l}. $$
so the exceptional set for (D2) consists of points violating \eqref{2-LFar}.
This set has a small measure since $\partial(\supp(\rho))$ is $(CN^u, \alpha)$-regular. 


Finally (D3) follows from inequalities \eqref{prop.L1App} and \eqref{prop.L2App} 
of Lemma \ref{prop.ha1} applied to the  centrally smoothable density $\rho_N$.

The proof  of Theorem  \ref{ThCLT-U-Lat} is thus complete. \carre

\subsection{CLT for affine lattices. Proof of Theorem  \ref{ThCLT-V-Lat}. }
For $f$ as in the statement of Theorem  \ref{ThCLT-V-Lat}  we define  $\Phi=\tcS(f)$ and $\xi_l(\tcL)=\Phi(g^l \tcL)$, with $\tcL \in \tcM$ distributed 
according to Haar measure. We also use  $\Phi^K= \Phi  \fh_{2, K}$ and  $\xi_l^K:= \Phi^K \circ g^l$.

If $(r,d)\neq (1,1)$ the analysis is exactly the same as in the case of linear lattices.

If $(r,d)=(1,1)$,  \eqref{prop.L1App} and \eqref{prop.L2App} of  Lemma \ref{prop.ha1} are not sufficient anymore to prove Property (H1), and we replace them by \eqref{prop.L1App.rd1} and \eqref{prop.L2App.rd1}. 
The proof of properties (H2)--(H4)  proceeds exactly as in the case of linear lattices.

Theorem  \ref{ThCLT-V-Lat}(a) thus follows from Theorem \ref{ThCLTabs}(a).

The changes needed for Theorem  \ref{ThCLT-V-Lat}(b) 
are the same as for
for Theorem  \ref{ThCLT-U-Lat}(b). Observe that (D3) and (H1) hold since
\eqref{prop.L1App}--\eqref{prop.L2App.rd1} are valid if Haar measure is replaced
with measures having centrally-smoothable densities. 
\carre

\subsection{Fixed $\bx.$ Proof of Theorem  \ref{ThCLT-V-Lat-as}. }
\label{ScasCLT} 

Here we deduce Theorem \ref{ThCLT-V-Lat-as} from a refined version of Theorem \ref{ThCLT-V-Lat}.
Using \eqref{prop.L0App.xfix} 
we conclude that it is sufficient to prove the Central Limit Theorem for sums with a shorter range of summation, 
$$ \frac{\sum_{n=N^\eps}^{N-1} \tcS(f)(g^n \tcL)-N\brf}{\tsigma \sqrt{N}}. $$
Next, take a large constant $\beta $ and for $\tcL\in \tcD$ let
$$ \cV(\tcL)=\{(D_\bt \bar{\Lambda}_\bb, (\by, 0))\tcL\}_
{|\bt|\leq N^{-\beta},  \; |\bb|\leq N^{-\beta},\; |\by|\leq N^{-\beta}
} . $$
Let 
$ \hcD=\bigcup_{\tcL\in \tcD} \cV(\tcL).$ We have a partition $\hPi$ of $\hcD$ into sets of the form
$\hPi(\tcL^*)=\bigcup_{\tcL\in \Pi(\tcL^*)} \cV(\tcL). $

\begin{lemm}
\label{LmFat}
If $\beta$ is sufficiently large large then there is a constant $\brdelta>0$ such that 
$$ \P_{\hPi(\tcL^*)} \left(\sum_{n=N^\eps}^{N-1} 
\left|\tcS(f)(g^n \tcL)-\tcS(f)(g^n (D_\bt \bar{\Lambda}_\bb, (\by, 0))\tcL)\right|\geq 1\right)\leq 
N^{-(1+\brdelta)} $$
except possibly for a set of $\tcL^*$ of measure $O(N^{-10}).$
\end{lemm}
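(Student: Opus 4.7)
The argument is a Markov-type estimate derived from a global $L^1$ bound. It suffices to show $\E_{\hcD}[X] = O(N^{-(11+\brdelta)})$, where $X$ is the sum inside the probability and $\hcD = \bigcup_{\tcL\in\tcD}\cV(\tcL)$ is given the natural fattened measure. A first Markov inequality with respect to the partition of $\hcD$ into the pieces $\hPi(\tcL^*)$ then excludes a set of $\tcL^*$ of measure $O(N^{-10})$ where $\E_{\hPi(\tcL^*)}[X] > N^{-(1+\brdelta)}$; on the complement, a second Markov inequality inside $\hPi(\tcL^*)$ delivers the stated probability bound. Taking $\beta$ sufficiently large in terms of $\br, \bs, d, r$ supplies the necessary slack.

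\textbf{Step 1 (Smallness after conjugation).} For $u := (D_\bt \bar{\Lambda}_\bb, (\by,0))$ with $|\bt|, |\bb|, |\by|\leq N^{-\beta}$, a direct computation using the block-diagonal form of $g$ gives
$$u_n := g^n u g^{-n} = \bigl(D_\bt \cdot \bar{\Lambda}_{2^{-n(1+d/r)}\bb},\,(2^{-n}\by, 0)\bigr),$$
so $|u_n - \text{Id}_{\tbbG}| \leq C N^{-\beta}$ uniformly in $n\geq 0$ (the $D_\bt$ piece commutes with $g^n$, and the other two pieces are exponentially contracted). Hence $g^n u \tcL = u_n g^n \tcL$ with $u_n$ close to the identity.

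\textbf{Step 2 (Sandwich + Lipschitz bound).} Set $\eta := N^{-\beta c_0}$ with $c_0 = 1/(\br+1)$. Using $f\in C^{\bs,\br}$ and \eqref{Cs-L1}, pick compactly supported $f^-\le f\le f^+$ with $\|f^+-f^-\|_{L^1}\le \eta$ and $\|f^\pm\|_{C^{\bs}}\le C\eta^{-\br}$. Then
\begin{align*}
|\tcS(f)(g^n\tcL)-\tcS(f)(u_n g^n\tcL)| &\le \tcS(f^+-f^-)(g^n\tcL) + \tcS(f^+-f^-)(u_n g^n\tcL) \\
&\phantom{\le}+ |\tcS(f^-)(u_n g^n\tcL)-\tcS(f^-)(g^n\tcL)|.
\end{align*}
The last term is controlled lattice-point-by-lattice-point by $\|f^-\|_{C^1}|u_n e - e|\le C\eta^{-\br}N^{-\beta}\le \eta$ on the bounded support of $f^-$. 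Thus the entire right-hand side is the sum of three averaged Siegel transforms of nonnegative compactly supported functions of $L^1$-mass $O(\eta)$.

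\textbf{Step 3 (Equidistribution, summation, Markov).} By Proposition \ref{LmRogers}(c), $\int_{\tcM}\tcS(h)\,d\mu = \int h$. Effective equidistribution of the unstable rectangle $\hcD$ under $g^n$, obtained by applying Proposition \ref{prop.rep} to $h_u$-orbits inside $\hcD$ and using the $\tbbG$-invariance of Haar measure to absorb the offset by $u_n$, gives
$$\E_{\hcD}\bigl[\tcS(h)(u_n g^n\tcL)\bigr] = \int h + O(N^{-c_1 n^\eps})$$
for each $n\geq N^\eps$. Applied to the three terms from Step 2, this yields $\E_{\hcD}\bigl[|\tcS(f)(g^n\tcL)-\tcS(f)(u_n g^n\tcL)|\bigr] = O(\eta)$. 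Summing,
$$\E_{\hcD}[X] = \sum_{n=N^\eps}^{N-1} O(\eta) = O(N^{1-\beta c_0}).$$
Choosing $\beta$ large enough that $\beta c_0 \geq 12 + \brdelta$ makes this $O(N^{-(11+\brdelta)})$, and the two Markov inequalities complete the proof.

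\textbf{Main obstacle.} The technical heart of the argument is Step 3: the Siegel transforms $\tcS(f^\pm)$ and $\tcS(f^+-f^-)$ are not uniformly bounded on $\tcM$ but blow up on cusp excursions. Consequently, the effective equidistribution must be combined with cusp truncation via $\fh_{2,K}$ from Section \ref{sec.truncation}, with tail contributions bounded by Lemma \ref{prop.ha1} (in its version for $C$-centrally smoothable densities, which $\hcD$ inherits from its product structure). Simultaneously balancing the truncation scale $K$, the boundary width $\eta$, and the equidistribution error against the target exponent $-(11+\brdelta)$ is what dictates how large $\beta$ must be chosen.
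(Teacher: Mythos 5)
Your skeleton (conjugate the perturbation to $u_n=g^nug^{-n}$ with $|u_n-\mathrm{Id}|\le CN^{-\beta}$, sandwich, take $L^1$ norms, finish with two Markov inequalities) is the same as the paper's, but the technical heart --- the unboundedness of the Siegel transform --- is not actually resolved, and the two places where you claim to resolve it do not work as stated. First, in Step 2 your ``lattice-point-by-lattice-point'' Lipschitz bound gives $\|f^-\|_{C^1}|u_ne-e|$ \emph{per lattice point}, and the number of points of $g^n\tcL$ in the support of $f^-$ is of order $\fa(g^n\tcL)$, which is unbounded; so the last term is $O(\eta^{-\br}N^{-\beta}\fa(g^n\tcL))$, not $O(\eta)$. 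Second, in Step 3 you apply Proposition \ref{prop.rep} to $\tcS(f^+-f^-)$, but that proposition is a statement about representative partitions for test functions with controlled $C^{\bs,\br}$ norms, not a bound on $\E_{\hcD}$ of an unbounded Siegel transform; and the fallback you offer in the ``Main obstacle'' paragraph --- that $\hcD$ is $C$-centrally smoothable --- is false: the density of $\hcD$ is of order $N^{\beta(\dim\bt+\dim\bb+\dim\by)}$ on its support, and averaging over a unit neighbourhood of the identity in $\bbA$ only flattens the $\bt$-directions, leaving an $L^\infty$ norm of order $N^{\beta(dr+d)}$, which blows up with $N$. So Lemma \ref{prop.ha1} in its centrally-smoothable form is not applicable here.

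The missing idea is to reverse the order of truncation and sandwiching. The paper first replaces $\tcS(f)$ by $\Phi^{K_N}=\tcS(f)\fh_{2,K_N}$ using Lemma \ref{truncation.xfix} --- the tail bound tailored to $\bbL$-rectangles, whose proof thickens $\Pi$ by a \emph{unit} amount in the centre-stable directions and compares to Haar measure there; this is exactly the device that substitutes for central smoothability of the thin set $\hcD$. Only then does it sandwich the now bounded, $C^{\bs,\br}$ function $\Phi^{K_N}$ between smooth functions $\Phi^\pm$ \emph{on $\tcM$} (via Lemma \ref{CrHs-Cs}(b)), so that the Lipschitz step reads $\|\Phi^-\|_{C^1}\,d(g^n\hcL,g^n\tcL)\le CK_N\eps_N^{-\br}N^{-\beta}$ uniformly --- no lattice-point count enters --- and the $L^1$ estimate $\|\sum_nI_n\|_{L^1}\le C\eps_NN$ only ever involves bounded smooth observables. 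Without this reordering, your Steps 2 and 3 as written do not close.
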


\begin{proof}
First we use \eqref{prop.L0App.xfix} to replace $\tcS(f)$ by $\Phi^{K_N}.$ 
Hence denoting $\eps_N:=N^{-20}$ we get functions
$\Phi\pm$ such that
$$ \Phi^-\leq \Phi^{K_N}\leq \Phi^+, 
\quad  \| \Phi^+-\Phi^- \| \leq \eps_N,\quad\text{and}\quad 
 \| \Phi^\pm \| _{C^\bs}\leq C \eps_N^{-\br}. $$
Denoting $\hcL=(D_\bt \bar{\Lambda}_\bb, (\by, 0))\tcL$ we claim that
$$ \left|\Phi^{K_N}(g^n \tcL)-\Phi^{K_N}(g^n \hcL)\right|
\leq \left|\Phi^+(g^n \hcL)-\Phi^-(g^n \hcL)\right|+C K_N \eps_N^{-\br} N^{-\beta}=
I_n+I\!\!I_n. $$
Consider for example the case where $\Phi^{K_N}(g^n \hcL)\geq \Phi^{K_N}(g^n \tcL),$
the opposite case being similar. Then
$$0\leq \Phi^{K_N}(g^n \hcL)-\Phi^{K_N}(g^n \tcL)\leq 
\Phi^+(g^n \hcL)-\Phi^-(g^n \tcL)$$
$$\leq  
\Phi^+(g^n \hcL)-\Phi^-(g^n \hcL)+|\Phi^-(g^n \hcL)-\Phi^-(g^n \tcL)|.$$
The second term can be estimated by
$$ \| \Phi^- \| _{C^1} d(g^n \hcL, g^n \tcL)\leq C K_N \eps^{-\br} N^{-\beta} $$
proving our claim.
Next if $\beta$ is sufficiently large then 
$|\sum_n I\!\!I _n|\leq \frac{1}{2}$ while
$$\bigg\Vert\sum_n I_n\bigg\Vert_{L^1}\leq C\eps_N N. $$
Now the lemma follows from Markov's inequality.
\end{proof}
Lemma \ref{LmFat} allows to reduce Theorem \ref{ThCLT-V-Lat-as} to the following result.

\begin{theo} \label{ThCLT-V-Lat-as-thick} 
  Suppose that $(r,d)\neq (1,1).$
  For each $r>0$ and each $\eps>0$
the following holds.
If  $\P_{\tcL^*}$ denotes the uniform distribution on $\hPi(\tcL^*)$ then
$$\P\left(\tcL^*: \sup_z
\left|\P_{\cL^*}\left(\frac{\sum_{n=N^\eps}^{N-1} \tcS(f)(g^n \tcL)-N\brf}
     {\tsigma \sqrt{N}}\leq z\right)-
     \frac{1}{\sqrt{2\pi}}\int_{-\infty}^z e^{-s^2/2} ds\right|\geq \eps\right)
     =O\left(N^{-r}\right). 
     $$
\end{theo}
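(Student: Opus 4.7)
The strategy is to apply the abstract Theorem \ref{ThCLTabs}(b) to the uniform probability measure on the thickened set $\hcD$ in order to obtain a global quantitative CLT, and then to transfer this to conditional CLTs along the partition $\hPi$ by a second-moment argument, boosted to higher moments so as to obtain the required polynomial rate $O(N^{-r})$.

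First I would check that uniform measure on $\hcD$ fits into Theorem \ref{ThCLT-V-Lat}(b). Counting dimensions, the factor $\cV(\tcL)$ adds $(d+r-1)+dr+d$ directions transverse to the $\bbL$-direction parameterising $\tcD$, so $\hcD$ is open in $\tcM$; its normalized indicator $\rho_N$ is bounded by $N^{O(\beta)}$, has $(CN^u,\alpha)$-regular boundary (being a product of rectangles), and is centrally smoothable up to a polynomial factor in $N$ (the $D_{\bt}$ direction absorbs one power when averaging over $\bbA$). Since the proofs of Theorem \ref{ThCLTabs}(b) and of the truncation Lemma \ref{prop.ha1} only use polynomial bounds on the centrally-smoothable constant, the same method yields a quantitative version of the CLT for $\tcL$ uniform on $\hcD$: the Kolmogorov--Smirnov distance from $\cN(0,1)$ is $O(N^{-\kappa})$ for some $\kappa>0$ depending only on $d,r,\bs,\br,\beta$.

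Next, fix a Lipschitz test function $\psi$ and set $F_N(\tcL^*)=\E_{\P_{\tcL^*}}\bigl[\psi\bigl((S_N-N\brf)/(\tsigma\sqrt{N})\bigr)\bigr]$ with $S_N=\sum_{n=N^\eps}^{N-1}\tcS(f)(g^n\tcL)$. Disintegration of the uniform measure on $\hcD$ along $\hPi$ gives $\E_\nu F_N=\E_{\cN}\psi+O(N^{-\kappa})$. To control $\E_\nu F_N^2$, rewrite it as an expectation over two independent samples $(\tcL_1,\tcL_2)$ drawn from the same $\hPi(\tcL^*)$; such a pair satisfies $\tcL_2=(\Lambda_{\Delta\ba},(\Delta\by,0))\,\tcL_1$ up to small $\bt,\bb$ perturbations, with $|\Delta\ba|=O(1)$. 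Since $g\Lambda_\ba g^{-1}=\Lambda_{2^{1+d/r}\ba}$, the $\bbL$-separation grows exponentially under iteration of $g$, producing rapid dynamical decoupling of the two Siegel transforms. A bivariate CLT for $(\xi_l^{(1)},\xi_l^{(2)})=(\Phi\circ g^l(\tcL_1),\Phi\circ g^l(\tcL_2))$, obtained by applying Theorem \ref{ThCLTabs} on the product space $\tcM\times\tcM$ with product representative partitions providing the filtrations and with $b_{k,K}$ vanishing across the two factors, then yields $\E_\nu F_N^2=(\E_{\cN}\psi)^2+O(N^{-\kappa'})$.

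Combining, $\text{Var}_\nu F_N=O(N^{-\kappa''})$, so Chebyshev gives $\nu(|F_N-\E_{\cN}\psi|\ge\eps)=O(N^{-\kappa''/3})$. To upgrade to arbitrary polynomial rate $O(N^{-r})$, I would iterate: for each even integer $2m$, the $2m$-th moment $\E_\nu(F_N-\E_{\cN}\psi)^{2m}$ reduces after disintegration to an integral against $2m$ independent samples from the same $\hPi(\tcL^*)$, and is controlled by a $2m$-fold joint CLT on $\tcM^{2m}$ established by the same scheme; then Markov yields a rate $O(N^{-r_m})$ with $r_m\to\infty$ as $m\to\infty$. Finally, a countable family of Lipschitz approximations to the indicators $\one_{(-\infty,z]}$, combined with uniform continuity of the Gaussian CDF, converts the pointwise control into the uniform-in-$z$ KS bound stated in the theorem. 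The main obstacle is the joint CLT on product spaces: verifying (H1)--(H4) simultaneously on all $k$ copies requires that product filtrations genuinely decouple different copies in the relevant time range $[N^\eps,N-1]$, which in turn relies on quantitative mixing of $g$ in the direction transverse to $\bbL$ together with the exponential growth of $\bbL$-separation between the copies under $g^n$; carrying this out with polynomial rates uniformly in the number of copies is the technical heart of the argument.
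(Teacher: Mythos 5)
Your overall strategy (prove a global CLT on the thickened set $\hcD$, then transfer it to the conditional measures $\P_{\tcL^*}$ by showing that the conditional characteristic/distribution functionals concentrate around their mean via second and higher moments) is genuinely different from what the paper does, and it has two serious gaps. First, every step of the moment method needs \emph{rates}: you need $\E_\nu F_N=\E_{\cN}\psi+O(N^{-\kappa})$ and likewise for higher moments, hence a Berry--Esseen--type version of Theorem \ref{ThCLTabs}. The abstract theorem only gives distributional convergence; its proof controls the characteristic function for each fixed $\lambda$, but to get a polynomial Kolmogorov--Smirnov rate one must control it uniformly for $|\lambda|\le N^{\delta}$ and apply Esseen smoothing, and the error terms in the block iteration (e.g.\ the cubic term bounded using $|Z_{m+1}|\le K n^{\eps}$) degrade with $\lambda$. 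This is asserted in one sentence and never justified; it is the load-bearing ingredient of your whole scheme.

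Second, and more fundamentally, the joint CLT on $\tcM^{2m}$ that you need is not a routine product extension. Two (or $2m$) independent points drawn from the \emph{same} fiber $\hPi(\tcL^*)$ differ by pure $\bbL$-elements: their affine $\bx$-components agree up to $O(N^{-\beta})$. Hence the initial measure of the tuple is concentrated in an $O(N^{-\beta})$-neighborhood of a positive-codimension subset of the unstable leaf of $g\times\dots\times g$ in $\tcM^{2m}$ (codimension $(2m-1)r$ inside $\tbbL^{2m}$). This is exactly the ``localized initial conditions on a positive-codimension submanifold'' difficulty that Theorem \ref{ThCLT-V-Lat-as} itself addresses, now reproduced on the product space; in particular the density hypotheses of Theorem \ref{ThCLTabs}(b) (boundedness after central smoothing) fail for such a measure once $m$ is large, so you cannot invoke the existing machinery and would have to solve a problem at least as hard as the one at hand, recursively in $m$. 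The paper's proof sidesteps both issues: it applies the abstract Theorem \ref{ThCLTabs} \emph{directly to each conditional measure} $\P_{\tcL^*}$, building the filtration from representative sub-partitions of $\hPi(\tcL^*)$ into $h_u$-orbit segments of size $\delta_N2^{-l}$, verifying (H1) via Lemma \ref{truncation.xfix} (moment bounds for restrictions of Haar measure to $\bbL$-rectangles) and (H2)--(H4) on the set of $N$-good $\tcL^*$; the $O(N^{-r})$ bound on the exceptional set then comes from Lemma \ref{LmMostGood} (the measure of fibers admitting no representative partition), with the convergence in Theorem \ref{ThCLTabs} uniform over systems sharing the same constants. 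No rate for the CLT itself and no product-space argument is needed.
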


The proof of Theorem \ref{ThCLT-V-Lat-as-thick}  is also very similar to the proof of Theorem~\ref{ThCLT-U-Lat}. Let us describe the
necessary modifications. 

The property (H1) follows from Lemma \ref{truncation.xfix}  instead of  \eqref{prop.L1App} and \eqref{prop.L2App}.

To define the required filtration of (H2)--(H4) we need to adapt 
Proposition \ref{prop.rep} as follows. 
Take $\delta_N$ going to $0$ sufficiently slowly, for example, $\delta_N=1/N.$
We let $\cP$ be a partition into segments of $h_u$ orbits of size $\delta_N$ and $\cP^l$ the corresponding subpartitions of pieces with length $\delta_N 2^{-l}.$
We let $\cP^l_u$ be the translates by $h_u$ of these partitions and denote by  
$\cP^l_u(\tcL^*)$ the collection of pieces of   $\cP^l_u$ which are contained in $\hPi(\tcL^*).$
We say that 
$\tcL^*$ is $N$-good if  there exists $u\in [0, 1/N]$ such that
for each $N^\eps\leq l\leq N$,  $\cP^l_u(\tcL^*)$  is 
representative with respect to the families \eqref{Collection1}.
The proof of Proposition \ref{prop.rep} also shows the following.

\begin{lemm}
\label{LmMostGood}
Given $r\in \N$,  if we take $R_3$ in \eqref{Collection1} sufficiently large then
$$\P\left(\tcL^*\text{ is not $N$-good}\right)\leq \frac{C}{N^r}. $$
\end{lemm}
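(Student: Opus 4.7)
The plan is to adapt the proof of Proposition \ref{prop.rep} so that one single choice of $u\in[0,1/N]$ simultaneously witnesses representativeness across every level $l\in[N^\eps,N]$ and every member of the families in \eqref{Collection1}. The polynomial mixing input is exactly as before: for each fixed quadruple $(l,j,A,k)$ with $A\in {\bf \Phi}^{(j)}$ and $k\geq R_3(\log_2 K_n+j)$, running the argument of Proposition \ref{prop.rep} with partition scale $L=\delta_N 2^{-l}$ and pushforward exponent $k_n=k+l$ (so that the rescaled scale is $2^{k_n}L=2^k\delta_N$) bounds the $\bar\mu$-measure of pairs $(\tcL^*,u)\in\tcD\times[0,1/N]$ for which $\cP^l_u(\tcL^*)$ fails the corresponding representativeness condition by
\[
C\, \|A\|_{C^{\bs,\br}(\cM)}\,(2^k\delta_N)^{-\kappa/2}.
\]

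Next I would sum these bad measures over the collection \eqref{Collection1}. By Lemma \ref{CrHs-Cs}, every $A\in {\bf \Phi}^{(j)}$ satisfies $\|A\|_{C^{\bs,\br}}\leq C K_n^2 Q^j$. The geometric sum in $k$ starting at $R_3(\log_2 K_n+j)$ contributes $K_n^{-R_3\kappa/2}\,2^{-R_3 j\kappa/2}$, and taking $R_3$ large enough that $Q\cdot 2^{-R_3\kappa/2}<1$ makes the resulting sum over $j\leq N$ convergent. The outer sum over $l\in[N^\eps,N]$ contributes a factor $N$, and the partition scale produces $\delta_N^{-\kappa/2}=N^{\kappa/2}$. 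Altogether the total bad $\bar\mu$-measure in $\tcD\times[0,1/N]$ is bounded by
\[
C\,N\cdot K_n^{2-R_3\kappa/2}\cdot N^{\kappa/2}=C\,N^{1+(1+\eps)(2-R_3\kappa/2)/s+\kappa/2},
\]
which can be made smaller than $N^{-r-1}$ by choosing $R_3$ sufficiently large in terms of $r,s,\eps,\kappa$.

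Finally, by Fubini and Markov's inequality, $\tcL^*$ fails to be $N$-good precisely when the $u$-slice of the bad set fills all of $[0,1/N]$, hence
\[
\P(\tcL^*\text{ is not }N\text{-good})\;\leq\; N\cdot\bar\mu(\text{bad set})\;\leq\; C\,N\cdot N^{-r-1}\;=\;C\,N^{-r},
\]
which is the required bound; for any $\tcL^*$ outside this exceptional set, any $u$ in the (nonempty) complement of the bad $u$-slice witnesses the $N$-goodness.

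The main obstacle is purely quantitative bookkeeping: one must verify that the geometric savings $K_n^{-R_3\kappa/2}\,2^{-R_3 j\kappa/2}$ produced by the floor $R_3(\log_2 K_n+j)$ on $k$ dominate four independent polynomial losses --- the factor $N$ from summing over levels $l$, the blow-up $\delta_N^{-\kappa/2}=N^{\kappa/2}$ from the small partition scale, the factor $K_n^2 Q^j$ from the norms of the products $\Phi^{K_n}\!\cdot\Phi^{K_n}\circ g^j$ provided by Lemma \ref{CrHs-Cs}(c), and the factor $N$ lost when converting the $\bar\mu$-estimate into a $\P$-estimate via Markov. Since $R_3$ can be taken as large as needed, this is a routine check.
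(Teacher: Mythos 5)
Your overall plan matches the paper's intent exactly: the paper's proof is the one-liner ``the proof of Proposition \ref{prop.rep} also shows the following,'' and what you supply is precisely the missing bookkeeping, so the key point to verify is that the geometric savings produced by the floor $R_3(\log_2 K_n+j)$ really do dominate all the polynomial losses. Your accounting of those losses is right: the $C^{\bs,2\br}$ norm of $\Phi^{K_n}\cdot\Phi^{K_n}\circ g^j$ grows like $K_n^2 Q^j$ by Lemma \ref{CrHs-Cs}(c); the geometric sum over $k\geq R_3(\log_2 K_n+j)$ of $(2^k\delta_N)^{-\kappa/2}$ gives $K_n^{-R_3\kappa/2}2^{-R_3 j\kappa/2}\delta_N^{-\kappa/2}$; taking $R_3$ so that $Q\,2^{-R_3\kappa/2}<1$ kills the sum over $j$; and the remaining exponent $1+(1+\eps)(2-R_3\kappa/2)/s+\kappa/2$ can be driven below $-r-1$ by increasing $R_3$. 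This is the correct mechanism, and the final Markov step converting a $\bar\mu$-estimate on $\tcD\times[0,1/N]$ into the $\P$-estimate costs the extra factor $N$ you account for.

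There is, however, a step you state too quickly, and it is the one real subtlety of the lemma. You claim that running the argument of Proposition \ref{prop.rep} ``bounds the $\bar\mu$-measure of pairs $(\tcL^*,u)\in\tcD\times[0,1/N]$ for which $\cP^l_u(\tcL^*)$ fails the representativeness condition.'' Strictly, the mixing input gives a bound on the $\mu\times\mathrm{Leb}$-measure on $\tcM\times[0,1]$ of the set of $(\cL,u)$ whose \emph{individual} piece $\cP^l_u(\cL)$ is bad, where $\mu$ is Haar on $\tcM$. Two conversions are needed to reach your claimed bound and neither is automatic. First, ``partition $\cP^l_u(\tcL^*)$ is not representative'' is itself a Markov-type event in $\cL$ (a positive proportion of pieces inside $\hPi(\tcL^*)$ is bad), so one more Markov in the $\cL$-variable must be applied before passing to the pair $(\tcL^*,u)$. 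Second, and more seriously, $\hPi(\tcL^*)$ and $\hcD$ have positive codimension in $\tcM$ for $(r,d)\neq(1,1)$ (the fattening $\cV$ only picks up the diagonal part $D_\bt$ of the center, not the full centralizer of $g$), so a $\mu$-small bad set in $\tcM$ does not a priori have small intersection with the sets $\hPi(\tcL^*)$; some form of conditioning or disintegration along the $\hPi$-partition is required to make the Fubini argument legitimate. The paper elides this point as well, so you cannot be faulted for not inventing the missing argument, but as written the Fubini/Markov step is not justified and is the genuine content that the phrase ``the proof of Proposition \ref{prop.rep} also shows'' is hiding.
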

On the other hand if $\tcL^*$ is $N$-good then
the filtration generated by the partitions  $\cP^l_u(\tcL^*)$ 
satisfies (H2)-(H4). 
Theorem  \ref{ThCLT-V-Lat-as-thick} thus follows  from Theorem \ref{ThCLTabs}. \carre

\begin{rema}
\label{RkEff}
The argument given above does not tell us for which $\bx$ Theorem \ref{ThCLT-V-as} holds. 
Of course rational $\bx$ have to be excluded due to Theorem \ref{ThCLT-U}. Now a simple Baire category argument
shows that Theorem \ref{ThCLT-V-as} also fails for very Liouvillian $\bx.$ It is of interest to provide explicit
Diophantine conditions which are sufficient for Theorem \ref{ThCLT-V-as}. The papers
\cite{E, St} provide tools which may be useful in attacking this question. 
\end{rema}

\section{Related results}
\label{ScGenGroup}
The arguments of the previous section are by  no means limited to $SL_{d+r}(\R)/SL_{d+r}(\Z).$
In particular, we have the following result.

\begin{theo}
\label{ThGenAction}
Let $\cG$ 
be a $C^r$ diffeomorphism, $r\geq 2,$ of a manifold $\bbM$ and $\cH_u$ be a $C^r$
flow on that space. Suppose that both $\cG$ and $\cH_u$
preserve a probability measure $\mu$ and there exists $c>0$ such that 
$$ \cG^n \cH_u=\cH_{e^{c n} u} \cG^n. $$
Assume that $\cH$ is polynomially mixing, that is, there exist positive numbers
$\kappa$ and $K$ such that if
\begin{equation}
  \label{ACr}
A\in C^r(\bbM) \text{ and }\mu(A)=0 \text{ then}
\end{equation}
\begin{equation}
  \label{PolyMix}
\left|\mu(A(A\circ \cH_u))\right|\leq \frac{ K  \| A \| ^2_{C^r}}{u^\kappa}   
\end{equation}  

Fix $L>0$ and let $U$ be a random variable uniformly  distributed on $[0, L].$
Let $A$ satisfy \eqref{ACr}. Then for $\mu$ almost all $x\in \bbM$ 
$$ \frac{\sum_{n=0}^{N-1} A(\cG^n \cH_U x)}{\sqrt{N}} $$
converges as $N\to\infty$ to a normal random $\cZ$ variable with zero mean and variance
$$ \sigma^2=\sum_{n=-\infty}^\infty \int_\bbM A(x) A(\cG^n x) d\mu(x). $$
Moreover for each $\eps, r$ there is a constant $C$ such that
$$ \mu\left(x: \sup_{z\in\R} 
\left|\P\left(\frac{\sum_{n=0}^{N-1} A(\cG^n \cH_U x)}{\sqrt{N}} \leq z\right)-\P(\cZ\leq z)\right|>\eps\right)
\leq \frac{C}{N^r}. 
$$
The constant
$C$ can be chosen uniformly when $L$ varies over an interval 
$[\underline{L}, \overline{L}]$ for some $0<\underline{L}<\overline{L}.$
\end{theo}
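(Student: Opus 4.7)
The plan is to apply the bounded-variable corollary to Theorem \ref{ThCLTabs} to the sequence $\xi_l(U) := A(\cG^l \cH_U x)$ viewed as random variables on $([0,L], du/L)$, for $x$ in a suitable full-measure (in fact, effectively large-measure) subset of $\bbM$. This mirrors the argument of Theorem \ref{ThCLT-V-Lat-as-thick}: the diffeomorphism $\cG$ plays the role of $g$, the flow $\cH_u$ the role of $h_u$, polynomial mixing \eqref{PolyMix} substitutes for the mixing of unipotent subgroups invoked in Section \ref{sec.representative}, and since $A \in C^r(\bbM)$ is bounded no truncation is required.

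First, I would construct the filtration. Fix $N$ and a small parameter $\delta_N\to 0$. Partition $[0,L]$ into intervals of size $L \delta_N e^{-cl}$ at level $l$, and let $\cF_l$ be the $\sigma$-algebra they generate. Using the conjugation $\cG^l \cH_U = \cH_{e^{cl} U} \cG^l$, conditional averages $\E(\xi_{l+k} \mid \cF_l)$ correspond, after the change of variable $v=e^{c(l+k)}U$, to normalized averages of $A$ along pieces of $\cH$-orbit of $\cG^{l+k} x$ of length $L\delta_N e^{ck}$; similarly, conditional averages $\E(\xi_l\mid\cF_{l+k})$ correspond to averages of $A$ along $\cH$-pieces through $\cG^l x$ of length $L\delta_N e^{-ck}$. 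Next, following Proposition \ref{prop.rep}, polynomial mixing \eqref{PolyMix} combined with a variance computation and Markov's inequality yields
$$\mu\left(y : \Bigl|\tfrac{1}{T}\int_0^T A'(\cH_u y)\,du\Bigr| \geq \|A'\|_{C^r} T^{-\kappa/3}\right) \leq C T^{-\kappa/3}$$
for any $A' \in C^r$ with $\mu(A')=0$. Applied uniformly to the finite families $\{A\circ\cG^j\}$ and $\{(A\circ\cG^j)(A\circ\cG^{j'})\}$ for indices $j,j'$ up to $N$, and summed over scales $T = L\delta_N e^{ck}$ for $k \in [1, N]$, this gives a "representative" good set of starting points $x$ whose complement has measure $O(N^{-r})$ for any prescribed $r$, provided $R_3$ (in the analogue of \eqref{Collection1}) is taken large enough.

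Given $x$ in the good set, I would verify the hypotheses $\widetilde{(H2)}$--$\widetilde{(H4)}$ of the bounded version of Theorem \ref{ThCLTabs}. Hypothesis $\widetilde{(H2)}$ follows from the $C^1$-regularity of $A$ along $\cH$-orbits: the oscillation of $\xi_l$ on a $\cF_{l+k}$-cell is bounded by $\|A\|_{C^1}\cdot L\delta_N e^{-ck}$, giving the required $\theta$-decay for $\theta$ any number in $(e^{-c/2},1)$. Hypothesis $\widetilde{(H3)}$ follows directly from representativeness, since $\E(\xi_{l+k}\mid\cF_l)(U_0)$ is an average of $A$ over a $\cH$-piece of $\cG^{l+k}x$ of length $L\delta_N e^{ck}$, hence $O(e^{-\kappa_0 c k})$ off an exceptional set of comparable size. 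Hypothesis $\widetilde{(H4)}$ is analogous with the product test function $(A\circ\cG^j)\cdot(A\circ\cG^{j'})$, and identifies
$$b_j \,=\, \int_\bbM A(y)\,A(\cG^j y)\,d\mu(y),$$
from which \eqref{EDSigma} furnishes the exponential decay of $b_j$ and hence the convergence of $\sigma^2 = b_0 + 2\sum_{j\geq 1} b_j = \sum_{n\in\Z}\mu(A \cdot (A\circ\cG^n))$.

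With $\widetilde{(H2)}$--$\widetilde{(H4)}$ in hand, the corollary to Theorem \ref{ThCLTabs} gives the CLT for each good $x$, with the claimed Green-Kubo variance; the effective bound $O(N^{-r})$ on the bad set of $x$ follows immediately from Step 2. Uniformity of constants in $L\in[\underline L,\overline L]$ is automatic, since the mixing constants $K,\kappa$ and the derivative bounds on $A$ and $\cH$ do not depend on $L$. \textbf{The main obstacle} is ensuring that \emph{one} exceptional set of measure $O(N^{-r})$ controls \emph{all} the test functions needed for $\widetilde{(H4)}$, which involve $C^r$-norms of products $(A\circ\cG^j)(A\circ\cG^{j'})$ that grow (at worst) like $e^{Cj}$ under iteration of a $C^r$-diffeomorphism. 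This exponential growth is absorbed by choosing scales $T=L\delta_N e^{ck}$ with $k$ proportional to $\max(j,j')$ through a sufficiently large factor $R_3$, at the cost of replacing $\kappa$ by a fraction of itself; if the original $\kappa$ is too small one can first amplify it by iterating \eqref{PolyMix} via Cauchy-Schwarz before applying the construction. \carre
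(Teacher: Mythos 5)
Your proposal is correct and follows essentially the same route as the paper: reduce to the bounded-variable corollary of Theorem \ref{ThCLTabs}, build the filtration from nested partitions of the $\cH_u$-orbit segment through $x$ (your parametrization of $[0,L]$ is equivalent to the paper's subpartitions $\cP^l_x$), verify $\widetilde{(H2)}$ from $C^1$-regularity and $\widetilde{(H3)}$--$\widetilde{(H4)}$ from representativeness of the partitions with respect to the families $\{A\}$ and $\{A\cdot(A\circ\cG^j)\}$, with the exceptional set of non-representative $x$ controlled via polynomial mixing as in Proposition \ref{prop.rep} and Lemma \ref{LmMostGood}. Your closing remark about absorbing the $Q^j$ growth of $\|A\circ\cG^j\|_{C^r}$ by requiring $k\geq R(\log_2 n+j)$ is exactly the paper's choice of collections \eqref{Collection3}--\eqref{Collection4}.
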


We note that $\bbM$ need not be compact, so different $C^r$ norms on $\bbM$ need not be equivalent. In the Theorem \ref{ThGenAction}
above we assume that the compositions with $\cG$ preserve $C^r$ norm and moreover
\begin{equation}
  \| A\circ \cG^j \| _{C^r}\leq \brK Q^j  \| A \| _{C^r}
\text{ for some } \brK>0, \; Q>1. 
\end{equation}

The proof of Theorem \ref{ThGenAction} is similar to but easier than the proof of Theorem
\ref{ThCLT-V-Lat-as}. Namely since $A$ is bounded we only need to check conditions 
$\widetilde{(H2)}$--$\widetilde{(H4)}.$

Fix a partition $\Pi$ of $\bbM$ into $\cH_u$ orbit segments of size $L.$
$\Pi_{x}$ denote the partition of $\bbM$ of the form $\cH_{u(x)}\Pi$
which has $x$ as the boundary point
where $u(x)$ is the smallest positive number with this property.
As in Section \ref{ScasCLT} 
we let $\cP^l_x$ be the subpartition of $\Pi_x$ into segments of size
$\delta_n 2^{-l}.$

Consider the following collections. 
\begin{equation}
\label{Collection3}
\left(\{k+l\}_{k\geq R_1(\log_2 n+j)}, \left[ A \cdot \left(A \circ \cG^j \right)\right]\right) 
\end{equation}
and 
\begin{equation}
\label{Collection4}
(\{k+l\}_{k\geq R_3\log_2 n}, \{A\}). \end{equation}

We say that $x$ is $N$-good if
for each $l\leq N$ the partition $\cP^l_x$ is 
representative with respect to families \eqref{Collection3} and \eqref{Collection4}.
Lemma \ref{LmMostGood} is easily extended to show that for each $r$,
$$\P(x \text{ is not $N$-good})\leq \frac{C}{N^r}$$
provided that $R_1, R_3$ are large enough. Hence almost every $x$ is $N$-good for all sufficiently
large $N.$ Next 
let $\cF_l^{x}$ be the filtration
corresponding to $\cP^l_x.$ If 
$x$ is $N$-good then $\{\cF_l^{x}\}_{l\leq N}$ satisfies the conditions 
$\widetilde{(H2)}$-$\widetilde{(H4)}$ which implies the CLT in view of Theorem \ref{ThCLTabs}.

We note the following consequence of Theorem \ref{ThGenAction}.
\begin{cor}
\label{CorGenGroup}
Let $G$ be a semisimple Lie group without compact factors,
and $\Gamma\subset G$ be an irreducible
lattice. 
Let $h_u$ be a unipotent subgroup which is expanded by an element $g\in G$ in the sense that
$$ g^n h_u=h_{e^{c n} u} g^n$$
for some $c>0$. 
Fix $L>0$ and let $U$ be a random variable uniformly  distributed on $[0, L].$
Suppose that $r$ is large enough and let $A$ be a $C^r$ function of zero mean.
Then for Haar almost all $g_0\in G/\Gamma$ 
$$ \frac{\sum_{n=0}^{N-1} A(g^n h_U g_0)}{\sqrt{N}} $$
converges as $N\to\infty$ to a normal random $\cZ$ variable with zero mean and variance
$$ \sigma^2=\sum_{n=-\infty}^\infty \int_G A(g_0) A(g^n g_0) d\mu(g_0). $$
Moreover for each $\eps, r$ there is a constant $C$ such that
$$ \mu\left(g_0: \sup_{z\in\R} 
\left|\P\left(\frac{\sum_{n=0}^{N-1} A(g^n h_U g_0)}{\sqrt{N}} \leq z\right)-\P(\cZ\leq z)\right|>\eps\right)
\leq \frac{C}{N^r}. 
$$
The constant $C$ can be chosen uniformly when $L$ varies over an interval 
$[\underline{L}, \overline{L}]$ for some $0<\underline{L}<\overline{L}.$
\end{cor}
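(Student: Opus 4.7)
The plan is to verify that $(\bbM,\mu,\cG,\cH_u) = (G/\Gamma,\ \text{Haar},\ g\cdot,\ h_u\cdot)$ satisfies the hypotheses of Theorem \ref{ThGenAction}, so the statement follows directly from that theorem applied to this data. Haar invariance under left translations by $g$ and by $h_u$ is standard; the commutation relation $\cG^n \cH_u = \cH_{e^{cn}u}\cG^n$ on $G/\Gamma$ is exactly the hypothesis on $g$ and $h_u$ transported to left translation on the quotient.

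The main point to check is the polynomial mixing bound \eqref{PolyMix} for $h_u$ on $G/\Gamma$ when applied to $C^r$ test functions of zero mean, with $r$ large. First, since $\Gamma$ is an irreducible lattice in the semisimple group $G$ without compact factors, the regular representation of $G$ on $L^2_0(G/\Gamma)$ contains no nontrivial finite-dimensional subrepresentation, so by the Howe–Moore theorem matrix coefficients of this representation decay to zero along any sequence leaving compact subsets of $G$. To get a polynomial rate along the unipotent direction $h_u$, the standard route is to first quantify the decay along a Cartan subgroup through property $(T)$ or, more generally, through uniform exponential decay of $K$-finite matrix coefficients (Cowling, Howe–Moore, Oh); combining such exponential decay along a diagonal with the commutation $g^n h_u = h_{e^{cn}u}g^n$ yields a polynomial bound in $u$ for smooth vectors. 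The required Sobolev norm is absorbed into $\|A\|_{C^r}^2$ by the classical smoothing argument: approximate $A$ by convolutions in the $K$-direction, use decay of matrix coefficients on $K$-finite vectors, and estimate the remainder by the $C^r$-norm. The result is an estimate of the form $|\mu(A\cdot (A\circ \cH_u))| \le K\|A\|_{C^r}^2 u^{-\kappa}$ for some $\kappa>0$ and all $r$ large enough in terms of $\dim G$.

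Next, I must check the auxiliary bound $\|A \circ \cG^j\|_{C^r} \le \brK Q^j \|A\|_{C^r}$: this is immediate because $\cG$ is left translation by the fixed element $g$, whose differential on the tangent bundle of $G/\Gamma$ is given by $\mathrm{Ad}(g)$, and hence iterating contributes at most a factor $\|\mathrm{Ad}(g)\|^{r j}$ to the $C^r$-norm. Finally, although $G/\Gamma$ need not be compact, the assumption in Theorem \ref{ThGenAction} only uses a fixed $C^r$ norm (which we may take to be the right-invariant Sobolev norm induced by a basis of $\mathrm{Lie}(G)$), and the measure-theoretic conclusion is insensitive to non-compactness.

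The most delicate step is establishing the polynomial mixing bound with quantitative control of $\|A\|_{C^r}$: for $G$ simple this is classical (Ratner, Moore), but in the reductive/semisimple case with an irreducible (possibly reducible-group-theoretically) lattice one must invoke the uniform pointwise bounds on matrix coefficients of Cowling–Howe and the irreducibility of $\Gamma$ to rule out invariant vectors for any simple factor. Once that is done, the conclusions of Theorem \ref{ThGenAction}, including the quantitative bound with exceptional set of measure $O(N^{-r})$ and the uniformity over $L \in [\underline L,\overline L]$, transfer verbatim, giving Corollary \ref{CorGenGroup}.
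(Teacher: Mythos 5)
Your proposal is correct and follows essentially the same route as the paper: the corollary is obtained by applying Theorem \ref{ThGenAction} to $\bbM=G/\Gamma$ with $\cG$ and $\cH$ the actions of $g$ and $h_u$, the only substantive hypothesis being the polynomial mixing bound \eqref{PolyMix}. Where you sketch the standard decay-of-matrix-coefficients argument (Howe--Moore, Cowling, smoothing to pass from $K$-finite vectors to $C^r$ functions), the paper simply cites \cite[Theorem 3.4]{KM2}, which packages exactly that estimate.
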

Corollary \ref{CorGenGroup} follows from Theorem \ref{ThGenAction} with $\bbM=G/\Gamma$ and $\cG$ and $\cH$ actions of
$g$ and $h$ respectively. The polynomial mixing \eqref{PolyMix} follows from \cite[Theorem 3.4]{KM2}.

\section{Variances.} \label{sec.variances}

\subsection{Variance of $U_N$}
\label{SSVarU}
Here we  establish \eqref{Sigma12}. We prove the formula for $\sigma_1^2,$ the computation for 
$\sigma_2^2$ is the same.
By \eqref{GreenKubo} it suffices to compute
$$ \sigma_1^2=\lim_{N\to\infty} \frac{1}{\ln N} 
\var\left(\sum_{n=0}^{[\log_2 N]-1} \cS(\one_{E_c}) (g^n \cL) \right) $$
where   the variance is taken with respect to the Haar measure on the space of lattices.
Note that 
$ \sum_{n=0}^{[\log_2 N]-1} \cS(\one_{E_c}) (g^n \cL)$ 
 can be replaced by
$ \cS(\one_{E_c(N)})(\cL)$
where
$$ E_c(N)=\{(x,y)\in \R^d\times \R^r: |x|\in [1, N], |x|{^{d/r}} y_j\in [0, c]\} . $$
Now Proposition \ref{LmRogers}(b) gives
\begin{align*}
\sigma_1^2 &=\lim_{N\to\infty} \frac{1}{\ln N} 
\sum_{gcd(p, q)=1} \int_{\R^{d+r}} \one_{E_c(N)} (px, py) \one_{E_c(N)}(qx, qy) dx dy 
\\
&=2 \lim_{N\to\infty} \frac{1}{\ln N} 
\sum_{\substack{gcd(p, q)=1 \\p<q}} \int_{\R^{d+r}} \one_{E_c(N)} (px, py) \one_{E_c(N)}(qx, qy) dx dy
\end{align*}
Since $p<q$, the last integral equals to 
\begin{align*}
\int_{\R^{d+r}} \one_{[1, N]}(|px|) \one_{[1, N]}(|qx|) 
\prod_j \left(\one_{[0, c]} (p^{1+d/r}|x|{^{d/r}} y_j) \one_{[0, c]} (q^{1+d/r}|x|{^{d/r}} y_j) 
\right) dx dy\\
=\int_{\R^{d+r}} \one_{{[1/p, N/q]}}(|x|) \prod_j \left(\one_{[0, c]} (q^{1+d/r}|x|{^{d/r}} y_j) 
\right) dx dy={\frac{c^r}{q^{d+r}}\; \int_{\R^d} \one_{{[1/p, N/q]}}(|x|) \frac{dx}{|x|^d} }
\end{align*}
{To evaluate the last integral we pass to the polar coordinates $x=\rho s$ where $s$ is a unit vector
in the Euclidean norm. Then,
$$\int_{\R^d} \one_{{[1/p, N/q]}}(|x|) \frac{dx}{|x|^d}=
\int_{\mathbb{S}^{d-1}} ds \int_{1/p|s|}^{N/q|s|} \frac{\rho^{d-1} d\rho}{|s|^d\rho^d}=
\ln \left(\frac{Np}{q}\right) \; \int_{\mathbb{S}^{d-1}} \frac{ds}{|s|^d}  .
$$
The second factor here equals to
$$ 
\int_{\mathbb{S}^{d-1}} \frac{ds}{|s|^d}=  d \int_{\mathbb{S}^{d-1}} ds \int_0^{1/|s|} \rho^{d-1} d\rho 
=d \int_{|x|<1} dx=d\; \mathrm{Vol}(\cB) .$$
}
Therefore,
$$\sigma_1^2 =2 c^r d 
\sum_{q=1}^\infty \frac{\varphi(q)}{q^{d+r}}\textrm{Vol}(\cB)
=
2 c^r d 
\frac{\zeta(d+r-1)}{\zeta(d+r)}\textrm{Vol}(\cB)$$
where the last step relies on \cite[Theorem 288]{HW}.

\subsection{Variance of $V_N$} 
Here we compute the limiting variance for $V_N.$ As in Section \ref{SSVarU} we consider the case
of boxes, the computations for balls being similar.

The same computation as in Section \ref{SSVarU} shows that we need to compute
$$ \lim_{N\to\infty} \frac{1}{\widehat{V}_N} 
\var\left(\cS(\one_{E_c(N)}) (\tcL) \right) $$
where the variance is taken with respect to the Haar measure on the space of affine lattices.
By Proposition \ref{LmRogers}(d) this variance equals to
$$ \int_{\R^{d+r}} \left[\one_{E_c(N)} \right]^2 (x, y) dx dy=
\int_{\R^{d+r}}\one_{E_c(N)} (x, y) dx dy=\widehat{V}_N. $$

\begin{rema}
The fact that the variance of $V_N$ has a simpler form than the variance of $U_N$ has the following 
explanation. Let 
$$\eta_k=\one_{B(k, d, r, c)} (k\ba), \quad \teta_k=\one_{B(k, d, r, c)} (\bx+k\ba)
$$ so that
$$ U_N=\sum_{|k|<N} \eta_k , \quad
V_N=\sum_{|k|<N} \teta_k .
$$
Then $\teta_k$'s are pairwise independent (even though triples 
$\teta_{k'}, \teta_{k''}, \teta_{k'''}$ are strongly dependent)
and hence uncorrelated (see e.g. \cite{Sch}) while $\eta_k$'s are not pairwise independent.
\end{rema}



\appendix 
\section{Truncation and norms}

For a fixed dimension $p \in \N$, we denote by $\cM$ the space of $p$ dimensional lattices. 
We let $C^\bs(\cM)$ denote  the space of smooth functions on $\cM$. Let $\fU_1, \fU_2, \dots ,\fU_{p^2-1}$ be a basis in the space of left invariant vector fields on $\cM.$ We let
$$  \| \Phi \| _{C^\bs}=\max_{0\leq k\leq \bs} \max_{i_1, i_2\dots i_k} 
\max_{\cL\in \cM} \left|\partial_{\fU_{i_1}} \partial_{\fU_{i_2}} \dots \partial_{\fU_{i_k}} \Phi(\cL) \right|.
$$
The space $C^\bs(\tcM)$ of smooth functions on the space of $r$-dimensional affine lattices is defined similarly.


We have the following inequality:
\begin{equation}
\label{Hs-CsProd}
 \| \Psi \Phi \| _{\bs}\leq C  \| \Psi \| _{C^\bs}  \| \Phi \| _{\bs}.
\end{equation}
Below we provide an extension to approximately smooth functions.

\begin{lemm}
\label{LmProd}
There is a constant $C$ such that if $\Phi_1, \Phi_2$ are $C^{\bs, \br}$ functions on $(\cM)$ or $\tcM$
then
$\Phi_1 \Phi_2$ is a $C^{\bs, 2\br}(\cM)$function and 
$$  \| \Phi_1 \Phi_2 \| _{C^{\bs, 2\br}} \leq C\;  \| \Phi_1 \| _{C^{\bs, \br}} 
\;  \| \Phi_2 \| _{C^{\bs, \br}}. $$
\end{lemm}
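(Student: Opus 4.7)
My plan is to construct sandwiching approximations of $\Phi_1\Phi_2$ from sandwiching approximations of the two factors. Write $N_i := \|\Phi_i\|_{C^{\bs,\br}}$. Fixing $\eps\in(0,1)$, the goal is to exhibit $\Psi^\pm$ with $\Psi^-\le\Phi_1\Phi_2\le\Psi^+$, $\|\Psi^+-\Psi^-\|_{L^1}<\eps$, and $\eps^{2\br}(\|\Psi^+\|_{C^\bs}+\|\Psi^-\|_{C^\bs})\le C N_1 N_2$. At an auxiliary scale $\eta=\eta(\eps)\in(0,1)$ to be chosen, I invoke the definition of $\|\cdot\|_{C^{\bs,\br}}$ (reading the inner supremum as an infimum, as is required for the quantity to be meaningful) to select $\Phi_i^\pm$ with $\Phi_i^-\le\Phi_i\le\Phi_i^+$, $\|\Phi_i^+-\Phi_i^-\|_{L^1}<\eta$, and $\|\Phi_i^\pm\|_{C^\bs}\le 2 N_i\eta^{-\br}$. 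A short bilinearity argument then reduces to the case where all functions are non-negative.

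With the non-negativity reduction in hand, the natural choice is $\Psi^+:=\Phi_1^+\Phi_2^+$ and $\Psi^-:=\Phi_1^-\Phi_2^-$, which makes the sandwich $\Psi^-\le\Phi_1\Phi_2\le\Psi^+$ automatic. Two estimates then need to be combined. The telescoping identity
\[
\Psi^+-\Psi^- \;=\; \Phi_1^+(\Phi_2^+-\Phi_2^-) \;+\; (\Phi_1^+-\Phi_1^-)\Phi_2^-
\]
produces $\|\Psi^+-\Psi^-\|_{L^1}\le\eta\bigl(\|\Phi_1^+\|_\infty+\|\Phi_2^-\|_\infty\bigr)$. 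The Leibniz-type inequality \eqref{Hs-CsProd} applied to the two factors gives $\|\Psi^\pm\|_{C^\bs}\le C\|\Phi_1^\pm\|_{C^\bs}\|\Phi_2^\pm\|_{C^\bs}\le 4C N_1 N_2\eta^{-2\br}$. Choosing $\eta$ of order $\eps$ makes the first bound less than $\eps$, while the second, after multiplication by $\eps^{2\br}$, produces the required control $\eps^{2\br}\|\Psi^\pm\|_{C^\bs}\le C'N_1 N_2$.

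The hard part will be controlling $\|\Phi_i^\pm\|_\infty$ by a universal multiple of $N_i$ rather than by the much larger $\|\Phi_i^\pm\|_{C^\bs}\sim N_i\eta^{-\br}$; with only the latter, the $L^1$-estimate loses a factor $\eta^{-\br}$ and the gain from doubling the exponent in the conclusion (from $\br$ to $2\br$) is no longer enough to absorb it. The resolution exploits the fact that taking $\eps$ close to $1$ in the definition of $\|\cdot\|_{C^{\bs,\br}}$ already forces $\|\Phi_i\|_\infty\le N_i$: the approximations $\Phi_i^\pm$ can then be refined so that $\|\Phi_i^\pm\|_\infty\le 2N_i$ by post-composing with a fixed smooth cap that acts as the identity below $N_i$ and saturates above $2N_i$. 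The delicate point is then to control the $C^\bs$-cost of this truncation; this requires the chain rule (or iterated use of \eqref{Hs-CsProd}) together with an argument that the combinatorial constants appearing from derivatives of the fixed cap can be absorbed into the overall constant $C$, which I regard as the main technical step of the proof. Once this truncation has been set up, the scheme above closes the argument.
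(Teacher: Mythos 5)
Your proposal follows essentially the same route as the paper's proof: sandwich each factor at an auxiliary scale, take $\Psi^{\pm}=\Phi_1^{\pm}\Phi_2^{\pm}$ after reducing to non-negative, uniformly bounded factors, telescope for the $L^1$ bound, and invoke \eqref{Hs-CsProd} for the $C^{\bs}$ bound; the paper's positivity reduction (normalize to $1\le\Phi_j\le 2$ and write a general $\Phi_j$ as a difference of two such functions) is the same bilinearity device you describe. The step you single out as the main technical one --- capping $\|\Phi_j^{\pm}\|_{\infty}$ by a multiple of $N_j$ via post-composition with a smooth cut-off while controlling the resulting $C^{\bs}$ norm --- is exactly the step the paper dispatches with the single phrase ``replace $\Phi_j^{\pm}$ by $\chi(\Phi_j^{\pm})$ where $\chi$ is an appropriate cutoff function,'' so you have reproduced the argument at the same level of detail as the original.
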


\begin{proof}
Without the loss of generality we may assume that 
$$ \| \Phi_1 \| _{C^{\bs, \br}}= \| \Phi_2 \| _{C^{\bs, \br}}=2. $$
Suppose first that 
$ 1\leq \Phi_j \leq 2.$ 
Given $\eps$ let $\Phi_j^\pm$ be the functions such that
$$ \Phi_j^-\leq \Phi_j \leq \Phi_j^+, \quad
 \| \Phi_j \| _{C^\bs} \leq 2 \eps^{-\br}, \quad 
 \| \Phi_j^+-\Phi_j^- \| _{L^1}\leq \eps. $$
Without the loss of generality we may assume that
$$0 \leq \Phi_j^-, \quad \Phi_j^+\leq 3, $$
since otherwise we can replace $\Phi_j^\pm$ by $\chi(\Phi_j^\pm)$ where $\chi$ is an appropriate
cutoff function. Then
$$ \Phi_1^- \Phi_2^-\leq \Phi_1 \Phi_2\leq \Phi_1^+ \Phi_2^+, \quad
 \| \Phi_1^+ \Phi_2^+-\Phi_1^- \Phi_2^- \| \leq 6 \eps, \quad
 \| \Phi_1^\pm \Phi_2^\pm \| _{C^\bs}\leq C \eps^{-2r}. $$
This proves the result in case $ 1\leq \Phi_j \leq 2.$ 
In general, write $\Phi_j= \tilde{\Phi}_j-\tilde{\tilde\Phi}_j$
where $\tilde\Phi_j=2 \| \Phi_j \| ,$ $\tilde{\tilde\Phi}_j=\tilde\Phi_j-\Phi_j$
and apply the foregoing argument to each term of the product
$ \big(\tilde{\Phi}_1-\tilde{\tilde\Phi}_1\big)
\big(\tilde{\Phi}_2-\tilde{\tilde\Phi}_2\big) . $
\end{proof}

Let
$$\fa(\cL)=\max\{(\text{covol}(\brcL))^{-1}: \brcL\subset \cL\}.$$
The role of this function is explained by the following lemmata.

\begin{lemm}
\label{LmST-CoVol}
For each sufficiently large $R$ there is a constant $C_1=C_1(R)$ such that if $f$ is supported on the 
ball of radius $R$ centered at the origin, then
\begin{equation}
\label{SiegelUpper1}
  \cS(f)(\cL)\leq C_1 \fa(\cL) 
\end{equation}
and
\begin{equation}
\label{SiegelUpper2}
  \tcS(f)(\cL+\bx)\leq C_1 \fa(\cL) .
\end{equation}
Also
\begin{equation}
\label{CoVolUpper}
\fa(\cL)\leq C_1 \cS(\one_{B(0, R)})(\cL) . 
\end{equation}
\end{lemm}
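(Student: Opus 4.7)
The plan is to express both $\fa(\cL)$ and the number of lattice points of $\cL$ in a ball of fixed radius in terms of the successive minima $\lambda_1(\cL)\leq\cdots\leq\lambda_p(\cL)$, and then compare. The key preliminary fact I would establish is the two-sided estimate
\[
\fa(\cL)\asymp \max_{1\leq k\leq p}\frac{1}{\lambda_1(\cL)\cdots\lambda_k(\cL)}.
\]
The lower bound follows by choosing $\brcL$ to be the sublattice spanned by successive-minima vectors $v_1,\dots,v_k$, whose covolume is at most $\lambda_1\cdots\lambda_k$ by Hadamard's inequality. For the upper bound, any rank-$k$ sublattice $\brcL\leq\cL$ satisfies $\text{covol}(\brcL)\asymp\prod_{i=1}^k\lambda_i(\brcL)$ by Minkowski's second theorem, and $\lambda_i(\brcL)\geq\lambda_i(\cL)$ since any $i$ linearly independent vectors of $\brcL$ lie in $\cL$.

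For \eqref{SiegelUpper1} and \eqref{SiegelUpper2} I would combine this with the standard point-counting estimate $\#(\cL\cap B(0,R))\leq C\prod_{i=1}^p\max(1,R/\lambda_i)$. Letting $k_R$ be the largest index with $\lambda_{k_R}\leq R$, the right-hand side is at most $C(R)/(\lambda_1\cdots\lambda_{k_R})\leq C'(R)\,\fa(\cL)$. Since $\supp(f)\subset B(0,R)$, we obtain $\cS(f)(\cL)\leq\|f\|_\infty\#(\cL\cap B(0,R))\leq C\,\fa(\cL)$, proving \eqref{SiegelUpper1}. For the affine case, bounding $\tcS(f)(\cL+\bx)\leq\|f\|_\infty\#(\cL\cap B(-\bx,R))$, the linear-case estimate is recovered by picking any $\ell_0\in\cL\cap B(-\bx,R)$ (if that intersection is nonempty) and noting that $\cL\cap B(-\bx,R)\subset \ell_0+(\cL\cap B(0,2R))$, which yields a bound uniform in $\bx$.

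For \eqref{CoVolUpper} I would let $k^*$ be an index maximizing $1/(\lambda_1\cdots\lambda_k)$; this maximum is attained at the largest $k$ with $\lambda_k\leq 1$, so in particular $\lambda_1,\dots,\lambda_{k^*}\leq 1$. For $R$ larger than some dimensional constant, the rank-$k^*$ sublattice $\cL_{k^*}\subset\cL$ spanned by $v_1,\dots,v_{k^*}$ contains at least $cR^{k^*}/(\lambda_1\cdots\lambda_{k^*})\geq c\,\fa(\cL)$ points in the ball of radius $R$ of its ambient $k^*$-dimensional subspace; all of these lie in $\cL\cap B(0,R)$, which gives $\cS(\one_{B(0,R)})(\cL)\geq c\,\fa(\cL)$.

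The main obstacle is the preliminary identification of $\fa(\cL)$ with $\max_k 1/(\lambda_1\cdots\lambda_k)$; once this piece of geometry of numbers is in place, the rest reduces to standard counting of lattice points in balls via successive minima, with the translation observation above handling the uniformity in $\bx$ needed for the affine statement.
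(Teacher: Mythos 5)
Your proof is correct, and it supplies content that the paper itself does not: for \eqref{SiegelUpper1} and \eqref{CoVolUpper} the paper simply cites Lemma~5.1 of Kleinbock--Shi--Weiss, whereas you reprove these from scratch via the comparison $\fa(\cL)\asymp\max_k(\lambda_1\cdots\lambda_k)^{-1}$ (Hadamard for one direction, Minkowski's second theorem plus $\lambda_i(\brcL)\ge\lambda_i(\cL)$ for the other) together with the standard two-sided count $\#(\cL\cap B(0,R))\asymp\prod_i\max(1,R/\lambda_i)$. That argument is sound; the only points worth making explicit are that the lower bound on the point count needed for \eqref{CoVolUpper} should be justified (e.g.\ by counting combinations $\sum m_iv_i$ with $|m_i|\le R/(p\lambda_i)$, which is where ``$R$ sufficiently large'' enters), and that the bound \eqref{SiegelUpper1} of course carries a factor $\|f\|_\infty$, which is implicit in the statement. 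For \eqref{SiegelUpper2} your argument is essentially identical to the paper's: the paper picks $\bre\in\cL+\bx$ with $f(\bre)\neq0$ and writes $\tcS(f)(\cL+\bx)=\cS(\tau_\bre f)(\cL)$ with $\tau_\bre f$ supported in $B(0,2R)$, which is exactly your observation that $\cL\cap B(-\bx,R)\subset\ell_0+(\cL\cap B(0,2R))$. The trade-off is the usual one: your route makes the lemma self-contained at the cost of importing Minkowski's second theorem and the successive-minima counting lemma, while the paper keeps the appendix short by outsourcing the geometry of numbers to the literature.
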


\begin{proof}

\eqref{SiegelUpper1} and \eqref{CoVolUpper}
are taken from (\cite[Lemma 5.1]{KSW}).
\eqref{SiegelUpper2} follows from \eqref{SiegelUpper1}. Indeed suppose that
$\tcS(f)(\cL+\bx)\neq 0.$ Then there exists 
$\bre\in \cL+\bx$ such that $f(\bre)\neq 0.$ 
Now we have
$$ \tcS(f)(\cL+\bx)=\cS(\tau_\bre f)(\cL) $$
where $\tau_\bre(f)(e)=f(e+\bre).$ Applying \eqref{SiegelUpper1}  to $\tau_\bre(f)$ we get 
\eqref{SiegelUpper2}.  \end{proof}

\begin{lemm}
\label{LmCoVolTail}
There is a constant $C_2$ such that 
$$ \mu(\cL: \fa(\cL)>t)\leq \frac{C_2}{t^{d+r}}. $$
\end{lemm}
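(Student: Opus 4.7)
My plan is to bound $\mu(\fa > t)$ by decomposing the event according to the rank of the sublattice that realizes the maximum in the definition of $\fa$, and then to invoke, for each rank, a Rogers-type mean value theorem. Setting $p = d+r$, the inequality $\fa(\cL) > t$ (for $t \geq 1$) is equivalent to the existence of a primitive sublattice $\brcL \subset \cL$ of some rank $j \in \{1,\ldots,p-1\}$ with $\mathrm{covol}(\brcL) < 1/t$. A union bound over $j$ reduces the task to showing, for each fixed $j$,
\[
\mu\bigl(\cL \in \cM : \exists\ \text{primitive rank-}j\ \text{sublattice } \brcL \subset \cL \text{ with } \mathrm{covol}(\brcL) \leq s\bigr) \leq C_j\, s^p
\]
for all $s \in (0,1]$; setting $s = 1/t$ then delivers the lemma.

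For $j=1$, primitive rank-$1$ sublattices are in bijection with $\pm$-pairs of primitive vectors, so the event is contained in $\{\cS(\one_{B(0,s)\setminus\{0\}})(\cL) \geq 2\}$. Siegel's mean value theorem (Proposition~\ref{LmRogers}(a) applied in the limiting sense to $\one_{B(0,s)\setminus\{0\}}$) gives
\[
\E\bigl[\cS(\one_{B(0,s)\setminus\{0\}})\bigr] = \mathrm{Vol}(B(0,s)) \asymp s^p,
\]
and Markov's inequality supplies the bound. For $2 \leq j \leq p-1$, I would identify a primitive rank-$j$ sublattice with the $\pm$-pair of its Plücker vector in $\Lambda^j \cL \subset \Lambda^j \R^p$, whose Euclidean norm equals $\mathrm{covol}(\brcL)$, and then invoke the Rogers--Schmidt generalization of Siegel's mean value theorem to the space of rank-$j$ primitive sublattices, which asserts
\[
\int_\cM \#\bigl\{\text{primitive rank-}j\ \text{sublattices of } \cL \text{ with covol} \leq s\bigr\}\, d\mu(\cL) \ll s^p.
\]
Markov's inequality again provides the desired bound, and summation over the finitely many values of $j$ absorbs the constants $C_j$ into a single $C_2$.

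The main obstacle is justifying the sharp exponent $p$ for $j \geq 2$: a naive volume estimate on the decomposable cone in $\Lambda^j \R^p$ yields only the worse exponent $j(p-j)+1$ (the dimension of the cone over the Plücker variety), so one must exploit the arithmetic structure via the Rogers/Schmidt mean value theorems on the space of primitive sublattices of given rank (see \cite[Appendix B]{EMV} and \cite{KSW}, where the companion bound \eqref{CoVolUpper} is also proved). An alternative route avoiding the Plücker detour is reduction theory: in Iwasawa--Siegel coordinates $\cL = k a n \Z^p$ with $a = \diag(a_1,\ldots,a_p)$ on a Siegel fundamental domain, one verifies that $\fa(\cL)$ is comparable to $\max_{1 \leq j \leq p-1} \prod_{i \leq j} a_i^{-1}$ under the unimodularity constraint $\prod_i a_i = 1$, and the tail of this quantity on the deep-cusp region can be computed directly against the explicit Haar density on the Borel subgroup; the modular character produces precisely the exponent $p$.
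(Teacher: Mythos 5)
Your argument is correct, but it is not the route the paper takes. The paper's proof is a two-line citation: by Lemma \ref{LmST-CoVol} one has the pointwise bound $\fa(\cL)\leq C_1\,\cS(\one_{B(0,R)})(\cL)$ (this is \eqref{CoVolUpper}, quoted from \cite[Lemma 5.1]{KSW}), and the tail estimate $\mu\bigl(\cS(\one_{B(0,R)})>t\bigr)\leq C_2 t^{-(d+r)}$ is quoted from \cite[Theorem 4.5]{M-npoint}; the lemma follows immediately. You instead stratify the event $\{\fa>t\}$ by the rank $j$ of the primitive sublattice realizing the maximum and apply, for each $j$, a first-moment bound: Siegel's formula plus Markov for $j=1$, and the Rogers--Schmidt mean value theorem on primitive rank-$j$ sublattices for $2\leq j\leq p-1$, which is exactly where the sharp exponent $p=d+r$ enters. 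This is a legitimate and essentially self-contained alternative (the reduction-theory variant you sketch, computing the tail of $\max_j\prod_{i\leq j}a_i^{-1}$ against the Haar density on a Siegel set, is also standard and is how the analogous estimate is obtained in the Kleinbock--Margulis logarithm laws); what the paper's route buys is brevity, at the price of outsourcing the same arithmetic content to Marklof's tail estimate, whose proof is of the same nature as yours. One small quibble: your aside that a naive volume estimate on the decomposable cone yields the \emph{worse} exponent $j(p-j)+1$ points in the wrong direction, since $j(p-j)+1\geq p$ would give a bound at least as strong for $s\leq 1$; the actual obstruction is that no volume estimate applies directly because $\Lambda^j\cL$ is not a Haar-random unimodular lattice in $\Lambda^j\R^p$, so one genuinely needs the mean value theorem on the space of rank-$j$ sublattices, as you correctly conclude.
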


\begin{proof} The proof follows
from \eqref{CoVolUpper} and the estimate
$$ \mu(\cL: \cS(\one_{B(0, R)})>t)\leq \frac{C_2}{t^{d+r}}$$
given in \cite[Theorem 4.5]{M-npoint}. \end{proof}

\begin{lemm}
\label{LmCutOff}
\cite{KM2, DBpoisson}
For each $\bs$ there are constants $C_3, C_4$ such that for each $K \geq 1$ there is a function
$\fh_{1, K}:\cM\to\R$ such that
\begin{itemize}
\item[(C1)] $0\leq \fh_{1, K}\leq 1,$
\item[(C2)] $\fh_{1,K}(\cL)=1$  if $\fa(\cL)\geq K,$
\item[(C3)] $\fh_{1,K}(\cL)=0$ if $\fa(\cL)\leq C_3 K,$
\item[(C4)] $ \| \fh_{1,K} \| _{C^\bs(\cM)} \leq C_4.$
\end{itemize}
\end{lemm}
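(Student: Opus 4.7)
The plan is to define $\fh_{1,K}$ as a composition $\chi \circ (\psi/K)$, where $\chi:\R\to[0,1]$ is a fixed smooth cutoff of one variable (independent of $K$) and $\psi:\cM\to(0,\infty)$ is a smooth ``height function'' comparable to $\fa$ in size and satisfying a log-Lipschitz bound with respect to the invariant derivatives.

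\textbf{Step 1: Construction of a smooth height $\psi$.} Following the approach of \cite{KM2}, I would pick a positive $C^\infty$ bump $\eta$ on $\bbG$ supported in a small neighborhood $V$ of the identity, with $\int_{\bbG} \eta(h)\,dh=1$, and set
$$\psi(g\Gamma) \;=\; \int_{\bbG} \fa(gh\Gamma)\,\eta(h)\,dh.$$
Since a bounded translation distorts the covolume of any sublattice by a bounded multiplicative factor, $\fa(gh\Gamma)$ is comparable to $\fa(g\Gamma)$ uniformly in $h\in V$, so $c_1\fa \le \psi \le c_2\fa$. Differentiating under the integral and moving each invariant vector field from the external argument $g$ onto $\eta$ via a change of variables $h\mapsto \exp(-t\fU_i)h$ (valid by left invariance of Haar measure) shows that, for every multi-index $\alpha$ of order at most $\bs$,
$$|\partial^\alpha \psi(g\Gamma)| \;\le\; \int_{\bbG} \fa(gh\Gamma)\,|\tilde\eta_\alpha(h)|\,dh \;\le\; C_\alpha\,\psi(g\Gamma),$$
where $\tilde\eta_\alpha$ is a fixed smooth function obtained from differentiating $\eta$. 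This is the key log-Lipschitz estimate.

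\textbf{Step 2: Assembly and verification.} Choose $\chi\in C^\infty(\R,[0,1])$ with $\chi\equiv 0$ on $(-\infty,a]$ and $\chi\equiv 1$ on $[b,\infty)$ for some fixed $0<a<b$ depending only on the constants $c_1,c_2$ above, and pick $C_3\in(0,1)$ so small that $\fa(\cL)\le C_3 K$ forces $\psi(\cL)/K\le a$, while $\fa(\cL)\ge K$ forces $\psi(\cL)/K\ge b$. Set $\fh_{1,K}(\cL)=\chi(\psi(\cL)/K)$. Properties (C1)--(C3) hold by construction. For (C4), Fa\`a di Bruno expands each derivative $\partial_{\fU_{i_1}}\cdots\partial_{\fU_{i_k}}\fh_{1,K}$ as a sum of terms of the form
$$\chi^{(j)}(\psi/K)\,K^{-j}\prod_{m=1}^{j}\partial^{\alpha_m}\psi,\qquad \textstyle\sum_m|\alpha_m|=k,\ j\le k\le \bs.$$
These vanish outside the transition region $\{a\le \psi/K\le b\}$, where $\psi\sim K$; the log-Lipschitz estimate then gives $\bigl|K^{-j}\prod_m \partial^{\alpha_m}\psi\bigr|\le C(\psi/K)^j\le C'$, uniformly in $K$, yielding (C4).

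\textbf{Main obstacle.} The delicate point is Step 1: the naive candidate $\log\fa$ is only piecewise smooth because $\fa$ is a maximum over sublattices, and the maximizer can switch. The mollification above circumvents this, but crucially exploits the comparability $\fa(gh\Gamma)\asymp\fa(g\Gamma)$ for $h$ in a compact set, which is a special feature of covolume-type heights on the space of lattices. An alternative would be to use the smooth $\alpha$-function constructions of Kleinbock--Margulis directly, which package these estimates into a height built from Siegel-type weighted sums over primitive sublattices; either route produces (C1)--(C4) with $C_3,C_4$ depending on $\bs$ but not on $K$.
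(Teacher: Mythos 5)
Your construction is correct and is essentially the paper's own argument: the paper also mollifies over a compact neighborhood of the identity in $\bbG$, taking $\fh_{1,K}=\int G(g)\,\one_{\fa(g\cL)>C_3K}\,d\mu(g)$ (smoothing the indicator directly, so that (C1) and (C4) are immediate and no Fa\`a di Bruno step is needed), and both routes rest on the same two facts, namely $\fa(g\cL)\asymp\fa(\cL)$ for $g$ in a compact set and the transfer of derivatives onto the mollifier. One small repair in your Step 1: the bound $\int \fa(gh\Gamma)|\tilde\eta_\alpha(h)|\,dh\le C_\alpha\psi(g\Gamma)$ should be justified by the comparability $\fa(gh\Gamma)\asymp\fa(g\Gamma)\asymp\psi(g\Gamma)$ rather than by dominating $|\tilde\eta_\alpha|$ by $\eta$, which fails near the boundary of the support.
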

For example, one can take
$$ \fh_{1,K}=\int_{SL_{p}(\R)} G(g) \one_{\fa(gL)>C_3 K}(g\cL)\; d\mu(g), \quad $$
where $G$ is a non negative function with integral one supported on the set $C_3^{-1}\leq  \| g \| \leq C_3.$
We write $\fh_2=1-\fh_1.$ We can also regard $\fh_j$ as functions on $\tcM$ defined by the formula
$\fh_j(\cL, \bx)=\fh_j(\cL).$
We are ready now to give the proofs of the statements from Section \ref{sec.truncation}.

\begin{proof}[Proof of Lemma \ref{CrHs-Cs}.] We prove the estimates for $\cS,$ the estimates for $\tcS$ are similar.

(a) We have
$$ |\cS(f) (\cL)| \fh_{2, K}(\cL) \leq \one_{\fa(\cL)\leq K} |\cS(f)|(\cL)\leq C K $$
where the last step uses Lemma \ref{LmST-CoVol}(a).
The derivatives of $\cS(f)$ are estimated similarly using the formula
$$ \partial_\fU(\cS(f))=\cS(\partial_\brfU f) 
\text{ where } (\partial_\brfU f)(\bx)=\frac{d}{dt}\big|_{t=0} f(e^{t\fU} \bx). $$

(b) Given $\eps$ consider functions $f^\pm$ such that $f^-\leq f\leq f^+$ and \eqref{Cs-L1} holds.
Then 
$\cS(f^-)\fh_{2, K}\leq \cS(f)\fh_{2, K} \leq \cS(f^+) \fh_{2, K}$ so the result follows from
already established part (a) and Proposition \ref{LmRogers}(a).

(c) We already know from part (b) that
$$ \| \cS(f) \fh_{2, K} \| _{C^{\bs, \br}}=O(K). $$
A similar argument shows that
$$ \| \left[\cS(f) \fh_{2, K}\right]\circ g^j  \| _{C^{\bs, \br}}=O(2^{j \bs} K). $$
Now the result follows by Lemma \ref{LmProd}.
\end{proof}

\begin{proof}[Proof of Lemma \ref{prop.ha1}.]
Property (C2) of Lemma \ref{LmCutOff} and Lemma \ref{LmCoVolTail} imply that
\begin{align*}
 \left|\E(\Phi-\Phi^K)\right| & \leq  C \int_{\fa(\cL) \geq K/C_3} \left|\Phi(\cL) \right| d\mu \\
&\leq C \int_{\fa(\cL) \geq K/C_3} \fa(\cL) d\mu \\
&\leq \frac{C}{K^{d+r-1}} 
 \end{align*} 
 which gives \eqref{prop.L1App}, and
\begin{align*}
 \E((\Phi-\Phi^K)^2) &\leq C \int_{\fa(\cL) \geq K/C_3} \Phi^2 (\cL) d\mu \\
&\leq C  \int_{\fa(\cL) \geq K/C_3} \fa^2 (\cL) d\mu
\\
&
\leq \frac{C}{K^{d+r-2}} 
 \end{align*} 
which gives \eqref{prop.L2App}.

Now we deal with the case of affine lattices and $(r,d)=(1,1)$. Let $\cL$ be such that $\fa(\cL)=t\gg 1.$
We claim that
\begin{equation}
\label{Impr1+1}
\int_{\R^2/\cL} \Phi(\cL, \bx) d\bx\leq C , \quad \int_{\R^2/\cL} \Phi^2(\cL, \bx) d\bx\leq C t. 
\end{equation}
This gives the required improvement of an extra power of $t$ 
that is sufficient to verify  \eqref{prop.L1App.rd1} and \eqref{prop.L2App.rd1} using Lemma \ref{LmCoVolTail}.

To show \eqref{Impr1+1} let $e_1$ be the shortest vector in $\cL.$ Note that $|e_1|$ is of order $1/t.$
Thus $\cL$ is contained in a union of lines going in the direction 
of $e_1$ so that the distance between the lines is almost $t.$ If we shift $\bx$ in the direction perpendicular
to $e_1$ then the probability that one of the shifted lines intersects the ball of a fixed radius around the origin
is $O(1/t).$ Since $\Phi(\cL, \bx)=O(t)$ due to \eqref{SiegelUpper2}, the estimate  \eqref{Impr1+1} follows.

We now show that  \eqref{prop.L1App} and \eqref{prop.L2App} hold if Haar measure is
replaced by a measure having  a $C$-centrally smoothable density with respect to Haar measure.
We just prove \eqref{prop.L1App} in the lattice case, since the proofs  of \eqref{prop.L2App}
as well as the proofs for affine lattices are  exactly the same. 
\begin{align*} \left| \E_\rho(\Phi-\Phi^K)\right| &= \int_\cM 1_{\Phi(g^l \cL)> K}  \left|\Phi(g^l \cL)\right| \rho (\cL)d\mu \\
&\leq  C \int_\cM 1_{\fa(g^l \cL)> K/C}  {\fa(g^l \cL)} \rho(\cL) d\mu \end{align*}
where the inequality follows from  Lemma \ref{LmST-CoVol} since $\Phi=\cS(f)$ with $f$ having a compact support.
Next, the $K$-central smoothability of $\rho$  and Lemma \ref{LmCoVolTail} imply that

\begin{align*}
\int_\cM      1_{\fa(g_l \cL)> K/C}   {\fa(g_l \cL)} \rho (\cL) d\mu
&
=\int_{\cA} \phi(a) \int_\cM     1_{\fa(g_l a \cL)> K/C} \rho(a \cL)  {\fa(g_l(a \cL))} d\mu da \\
&
\leq 
C \int_\cM 1_{\fa(g_l \cL)> K/C} {\fa(g_l \cL)} \left(\int_{\cA} \phi(a) \rho(a \cL) da \right) d\mu 
\\
&
\leq C \int_\cM 1_{\fa(g_l \cL)>K/C} \fa(g_l \cL) d\mu \\
&
\leq C \int_{\fa(\cL) > K/C} \fa(\cL) d\mu 
\\
&
\leq C{K^{1-(d+r)}} . \end{align*}
Inequality \eqref{prop.L1App} is thus proved.

As for  the case of affine lattices and $(r,d)=(1,1)$,  \eqref{prop.L1App.rd1} and \eqref{prop.L2App.rd1} can be proved as in the case of Haar measure, if one makes the following two observations:
\begin{enumerate}
\item  Equation \eqref{Impr1+1} still holds for a measure with density $\rho;$
\item The tail estimate of Lemma \ref{LmCoVolTail} can be proved for measures with centrally smoothable
  densities following the same lines as the proof of \eqref{prop.L1App}. 
\end{enumerate}
\end{proof}


\begin{proof}[Proof of Lemma \ref{truncation.xfix}.]
 Let $U$ be the set of points obtained by issuing 
local center-stable manifolds through all points of $\Pi.$ 
That is 
$$U=\bigcup_{\tcL'\in \Pi, |\sigma|\leq 1, |\bb|\leq 1} 
D_\bt \bar{\Lambda}_\bb \tcL' . $$

Let $\mu_{\bbL}$ and $\mu_{\bbG}$
denote the Haar measures on $\bbL$ and $\bbG$ respectively.
Then 
\begin{align*}\P_\Pi(|\Phi(g^l \tcL')|>K)
&
\leq C_1 \mu_\bbG(\hcL\in U: |\fa(g^l \hcL)|>K)
\\
&\leq
C_2 \mu_\bbG(\hcL\in \tcM: |\fa(g^l \hcL)|>K)
\\
&
\leq \frac{C}{K^{d+r}}. 
\end{align*}
Similarly for $q\in\{1, 2\}$,
\begin{align*}
\int_{\Pi} |\Phi_l(\tcL')-\Phi_l^K(\tcL')|^q \; d\mu_{\bbL}(\tcL')
&\leq
C \int_{\Pi} |\Phi(g^l \tcL')|^q \one_{\fa(g^l \tcL')>K} \; d\mu_{\bbL}(\tcL')
\\
&\leq 
C \int_{U} |\fa(g^l \tcL')|^q \one_{\fa(g^l \tcL')>K} \; d\mu_{\bbG}(\tcL') \end{align*}
where the last step follows from \eqref{SiegelUpper2}.
Since the integrand depends only on projection of $\tcL'$ to $\cM$ the integral can be estimated by
$$C \int_{\cM} |\fa(g^l \cL)|^q \one_{\fa(g^l \cL)>K} \; d\mu_{\bbG}(\cL)=
C \int_{\cM} |\fa(\cL)|^q \one_{\fa(\cL)>K} \; d\mu_{\bbG}(\cL). $$
Thus \eqref{prop.L1App.xfix} and  \eqref{prop.L2App.xfix} follow 
Lemma \ref{LmCoVolTail}.
\end{proof}


\begin{thebibliography}{999}
\bibitem{AGT1} J. S. Athreya, A. Ghosh, J. Tseng
{\it Spiraling of approximations and spherical averages of Siegel transforms,}
Journal LMS {\bf 91} (2015) 383--404.

\bibitem{AGT2} J. S. Athreya, A. Ghosh, J. Tseng
{\it Spherical averages of Siegel transforms for higher rank diagonal actions and applications,}
arXiv:1407.3573

\bibitem{APT} J. S. Athreya, A. Parrish, J. Tseng
{\it Ergodic Theory and Diophantine approximation for linear forms and translation surfaces,}
arXiv:1401.4148






\bibitem{BBV}  Badziahin D., Beresnevich V., Velani S. 
{\it Inhomogeneous theory of dual Diophantine approximation on manifolds,} Adv. Math. {\bf 232} (2013) 1--35. 



\bibitem{BBKM} V. Beresnevich, V. Bernik, D. Kleinbock, G. A. Margulis, {\it  Metric Diophantine approximation: The Khintchine--Groshev theorem for non-degenerate manifolds}, Moscow Math. J. {\bf 2} (2002), 203--225. 

\bibitem{BV} Beresnevich V., Velani S. {\it Classical metric Diophantine approximation revisited: the Khintchine-Groshev theorem,} IMRN (2010) no. 1, 69--86.

\bibitem{Cassel} J. W. S. Cassels, {\it An introduction to Diophantine approximation}, Cambridge Tracts in
Math., vol. 45, Cambridge Univ. Press, Cambridge, 1957.

\bibitem{CFS} Cornfeld I. P., Fomin S. V., Sinaĭ, Ya. G.
  {\it Ergodic theory,}
  Grundlehren der Mathematischen Wissenschaften
  {\bf 245} (1982) Springer, New York, x+486 pp.   

\bibitem{Da}  Dani S. G. {\it Divergent trajectories of flows on homogeneous spaces and Diophantine approximation,} J. Reine Angew. Math. {\bf 359} (1985) 55--89. 

\bibitem{D} D. Dolgopyat  
{\it Limit Theorems for partially hyperbolic systems,} Trans. AMS {\bf 356} (2004) 1637--1689. 

\bibitem{DFsurvey} D. Dolgopyat, B. Fayad, {\it Limit Theorems for toral translations,}
Proc. Symp. Pure Math. {\bf 89} (2015) 227--277.

\bibitem{DBpoisson} D. Dolgopyat, B. Fayad 
{\it Deviations of Ergodic sums for Toral Translations: Boxes}, preprint.


\bibitem{DS} Duffin R. J., Schaeffer A.C.  {\it Khintchine's problem in metric Diophantine approximation,} 
Duke Math. J. {\bf 8} (1941) 243--255.



\bibitem{E} S. Edwards {\it The rate of mixing for diagonal flows on spaces of affine lattices,}
preprint.

\bibitem{EMV} D. El-Baz, J. Marklof, I. Vinogradov 
{\it The distribution of directions in an affine lattice: two-point correlations and mixed moments,}
IMRN {\bf 2015} (2015) 1371--1400.


\bibitem{HW} G. H. Hardy, E. M. Wright  {\it An Introduction to the Theory of Numbers,} 5th ed., (1979),
Oxford University Press, Oxford, UK.

\bibitem{HNV} N. Haydn, M. Nicol, S. Vaienti, L. Zhang
{\it Central limit theorems for the shrinking target problem,} J. Stat. Phys. {\bf 153} (2013) 864--887.


\bibitem{Gr} Groshev A. {\it A theorem on a system of linear forms,} 
Doklady Akademii Nauk SSSR {\bf 19} (1938) 151--152.






\bibitem{Kh} A. Khintchine
{\it Ein Satz uber Kettenbruche, mit arithmetischen Anwendungen,}
Mathematische Zeitschrift {\bf 18} (1923) 289--306.

\bibitem{KH24}  A. Khintchine
{\it Einige Satze uber Kettenbruche, mit Anwendungen auf die Theorie der Diophantischen Approximationen,}
Math. Ann. {\bf 92} (1924) 115--125.


\bibitem{KM1} D. Y. Kleinbock, G. A. Margulis, {\it Bounded orbits of nonquasiunipotent flows on homogeneous spaces,} in {\it Sinai's Moscow Seminar on Dynamical Systems} 
AMS. Transl. {\bf 171}  (1996) 141--172.

\bibitem{KM2} D. Y. Kleinbock, G. A.  Margulis {\it Logarithm laws for flows on homogeneous spaces,}
Invent. Math. {\bf 138} (1999) 451--494. 


\bibitem{KSW}  D. Kleinbock, R. Shi, B. Weiss
{\it Pointwise equidistribution with an error rate and with respect to unbounded functions,} to appear in Math. Ann.;
arXiv:1505.06717.


\bibitem{LB} Le Borgne S. {\it Principes d'invariance pour les flots diagonaux sur SL(d,R)/SL(d,Z),} 
Ann. Inst. H. Poincare Prob. Stat. {\bf 38} (2002) 581--612.


\bibitem{M-npoint} J. Marklof 
{\it The $n$-point correlations between values of a linear form,}
Erg. Th., Dynam. Sys. {\bf  20}  (2000) 1127--1172.

\bibitem{M-Mod1} J. Marklof {\it Distribution modulo one and Ratner's theorem,} in {\it Equidistribution in number theory, an introduction,}  NATO Sci. Ser. II Math. Phys. Chem., {\bf 237} (2007) 217--244, Springer, Dordrecht.

\bibitem{Ph}
W. Philipp,  {\it Mixing Sequences of Random Variables and Probabilistic Number Theory}, AMS Memoirs {\bf 114}, 1971.

\bibitem{Rat} Ratner M.
{\it The central limit theorem for geodesic flows on n-dimensional manifolds of negative curvature,}
Israel J. Math. {\bf 16} (1973) 181--197. 

\bibitem{Sam} Samur J., {\it A functional central limit theorem in Diophantine approximation}, Proc. Amer. Math. Soc. {\bf 111} (1991) 901--911.



 \bibitem{schmidt0} Schmidt W. M. {\it A metrical theorem in diophantine approximation,} Canadian J. Math. {\bf 12} 
(1960) 619--631.


\bibitem{Sch} Schmidt W. {\it Metrical theorems on fractional parts of sequences},  Trans. Amer. Math. Soc. {\bf 110} (1964)  493--518. 

\bibitem{Schmidt}  W. Schmidt, {\it Badly approximable systems of linear forms}, J. Number Theory 1 (1969), 
139--154.

\bibitem{S}  Sinai Ya. G. {\it The central limit theorem for geodesic flows on manifolds of constant negative curvature,}Soviet Math. Dokl. {\bf 1} (1960) 983--987. 

\bibitem{St} A. Strombergsson {\it An Effective Ratner Equidistribution Result for $SL(2,\R)\ltimes \R^2,$} 
Duke Math. J. {\bf 164} (2015), 843--902.

\end{thebibliography}
\end{document}